%% file: FFTP-NEW.tex
\documentclass[reqno]{amsart}
\usepackage{amsthm} 
\usepackage{amsmath}
\usepackage{amssymb}
\usepackage{comment}
\usepackage{graphicx}
\usepackage{subcaption}
\usepackage{subfloat}
\usepackage{enumitem}
\usepackage{tikz}
\usetikzlibrary{arrows}
\usetikzlibrary{decorations.markings}
\usetikzlibrary{decorations.pathreplacing,calligraphy}
\usetikzlibrary{decorations.pathmorphing}
\usetikzlibrary{decorations.pathreplacing,shapes.misc}
\usepackage[dvipsnames]{xcolor}
\usepackage[section]{placeins}
\usepackage[mathscr]{euscript}
\usepackage{physics}
\usepackage{caption}
\usepackage{subcaption}
\usepackage{wrapfig}
\usepackage{hyperref}

\newtheorem{theorem}{Theorem}[section]
\newtheorem{proposition}[theorem]{Proposition}
\newtheorem{corollary}[theorem]{Corollary}
\newtheorem{lemma}[theorem]{Lemma}

\theoremstyle{definition}
\newtheorem{definition}[theorem]{Definition}
\newtheorem{example}[theorem]{Example}
\theoremstyle{remark}
\newtheorem*{remark}{Remark}

\newcommand{\ZZ}{\mathbb{Z}}
\newcommand{\NN}{\mathbb{N}}

\newcommand{\CC}{\mathcal{C}}
\newcommand{\GG}{\mathcal{G}}

\newcommand{\TTT}{\mathcal{T}}
\newcommand{\HH}{\mathcal{H}}

\DeclareMathOperator{\cy}{Cay}

\newcommand{\cay}{\cy(G,S)}

\newcommand{\cayda}{\cy(D,A)}
\newcommand{\caydsx}{\cy(D,S(X))}
\newcommand{\caygood}{\cy(D,X_S)}

\definecolor{ashgrey}{rgb}{0.7, 0.75, 0.71}

\title{Dyer Groups have the falsification by fellow-traveller property}
\author{Megan Howarth}

\address{University of Geneva, Section of Mathematics, Rue du Conseil-Général 7-9, 1205 Geneva, Switzerland.}
\email{Megan.Howarth@unige.ch}

\subjclass[2020]{Primary 20F65; Secondary 20F10, 20F55}

\keywords{Dyer groups, mediangle graphs, falsification by fellow-traveller property, cone types}

\begin{document}

\vspace{-1em}

\begin{abstract}
  This paper is devoted to the study of the falsification by fellow-traveller property (FFTP) in Dyer groups. We exhibit a finite generating set for which the associated Cayley graph is a locally finite mediangle graph, and leverage its properties to prove that Dyer groups have the FFTP. It follows that Dyer groups have finitely many cone types, emphasising their role in providing a unified approach to Coxeter groups and graph products of cyclic groups.
\end{abstract}

\maketitle

\section{Introduction}

\textit{Dyer groups} form a class of groups which first appeared in Dyer's work on reflection subgroups of Coxeter systems \cite{dyer_reflection_1990} in the $90$s, and have since become the subject of independent study \cite{PaS, PaV}; additionally, they are instances of \textit{periagroups} \cite{genevois, Genevois2}. The main appeal of Dyer groups lies in the fact that they provide a unified framework for studying both Coxeter groups and graph products of cyclic groups, which include right-angled Artin groups (RAAGs). Indeed, they are all defined by (vertex- and/or edge-) labelled graphs, and it has been shown that they share many fundamental properties, including the rationality of their growth series \cite{PaV} and a common solution to the word problem \cite{PaS}. \\

By methods specific to each family, it is also known 
that Coxeter groups and graph products of cyclic groups, equipped with their standard generating sets, form Cannon pairs \cite{brinkhowlett, LMW}; that is, they admit only \textit{finitely many cone types}. The notion of \textit{cone types} was introduced by Cannon in the $80$s; intuitively, the cone type of an element corresponds to the set of elements which extend it geodesically. He used them to compute growth functions of surface groups and some triangle groups \cite{cannonpreprint, cannon_growth_1992}, and ultimately to prove rationality of growth functions for all Gromov hyperbolic groups \cite{Cannon}. Notably, a finite set of cone types enables the study of infinite objects with only finite information and provides insight into their asymptotic geometry. A direct consequence of being a Cannon pair is admitting a \textit{regular} full language of geodesics, that is, recognised by a finite-state automaton \cite{Antolin, NS-course}. 
Beyond hyperbolic groups, which have finitely many cone types with respect to \textit{any} finite generating set, there are some classes of groups which have been shown to form Cannon pairs with respect to \textit{some} generating set, including virtually abelian groups \cite{NS}, Coxeter groups \cite{brinkhowlett} and wallpaper groups \cite{grigorchuk_growth_2022}. Moreover, this finiteness property is preserved under taking graph products \cite{LMW}. As an application, cone types can be leveraged to better understand the asymptotic characteristics of the group, including to compute the (rational) growth series \cite{grigorchuk_growth_2022, HowNag}, or tight upper and lower estimates for the spectral radius of the simple random walk on its Cayley graph \cite{gouezel_numerical_2015, HowNag, nagnibeda_estimate_1999, Nagnibeda}, in the non-amenable case. \\

In a similar direction, the \textit{falsification by fellow-traveller property (FFTP)} was initially introduced in the $90$s by Neumann and Shapiro, inspired by ideas of Cannon \cite{Cannon}, to approach the problem of determining which pairs of groups together with a finite generating set admit a regular language of geodesics. We note that the FFTP implies the finiteness of the cone types, and is in fact strictly stronger \cite{elder1, NS-course}. Additionally, the FFTP has been shown to imply many other interesting properties, such as almost convexity, finite presentation, at most quadratic isoperimetric function, type $F_3$ \cite{elder2} and solvable word problem in quadratic time \cite{holtgarside}. Among groups known to satisfy this property with respect to a suitable generating set, we find geometrically finite hyperbolic groups \cite{NS}, virtually abelian groups \cite{NS},   
Coxeter groups \cite{noskov}, Artin groups of large type \cite{holt_artin_2012}, dihedral Artin groups \cite{ciobanu_conjugacy_2024} 
and Garside groups \cite{holtgarside}, which include braid groups and Artin groups of finite type~\cite{CMgarside}. \\

Dyer groups come together with a standard generating set, 
for which the FFTP is not known to hold. The main result of this paper is that every Dyer group admits a finite generating set with respect to which it satisfies the FFTP (Theorem \ref{thm:dyer-fftp}) and hence, forms a Cannon pair (Corollary \ref{cor:cannon-pair}). The generating set we construct, $X_S$, is obtained by modifying the standard one, and interpolates between it and the so-called set of syllables (Definition \ref{def:alternative-gen-sets}). Our main tools are the geometric description of the Cayley graphs of Dyer groups as \textit{mediangle graphs}, introduced by Genevois \cite{genevois, Genevois2}, together with their structural properties. \\


This paper is organised as follows. We start with Section \ref{sec:background}, where we describe the framework and give the necessary background on Dyer groups, (mediangle) graphs, cone types and the falsification by fellow-traveller property. In Section \ref{sec:FFTP-proof}, we show that endowed with the finite set of generators $X_S$ (see Definition \ref{def:alternative-gen-sets}), $\caygood$ is a mediangle graph (Proposition \ref{prop:cayd-mediangle}) and has the FFTP (Theorem~\ref{thm:dyer-fftp}).

\section{Background and preliminary notions}
\label{sec:background}
In this section, we collect the definitions and basic results about Dyer groups, graphs, cone types and the falsification by fellow-traveller property that will be relevant in the rest of this work.

\subsection{Dyer groups}

\begin{definition} [\cite{PaS, PaV}]
\label{def:dyer}
    Let $\GG = \big( V(\GG), E(\GG) \big)$ be a finite simplicial graph. 
    We endow $E(\GG)$ and $V(\GG)$ with the following edge- and vertex-labelling maps, respectively
    \begin{align*}
        m : \hspace{0.5em} & E(\GG) \to \NN_{\geq 2}, \\
        f : \hspace{0.5em} & V(\GG) \to \NN_{\geq 2} \cup \{\infty\},
    \end{align*}
    with the additional \textit{compatibility assumption} that for each edge $e = \{u,v\} \in E(\GG)$,
    \begin{equation}
    \label{eq:compatibility}
        m(e) \neq 2 \implies f(u) = 2 = f(v). 
    \end{equation}
    The triple $(\GG,m,f)$ is called a \textit{Dyer graph} and the group $D = D(\GG, m, f)$ associated to the data $(\GG,m,f)$ is called a \textit{Dyer group} and is defined by the \textit{standard presentation}
    \begin{equation}
    \label{eq:dyerpres}
        D = \Biggl\langle 
       \begin{array}{l|cl}
                      x_v, v \in V(\GG)  & x_v^{f(v)} = 1 \hspace{0.5em} \forall v \in V(\GG) \text{ such that } f(v) \neq \infty, \\
             & [x_u,x_v]_{m(e)} = [x_v,x_u]_{m(e)} \hspace{0.5em} \forall e = \{u,v\} \in E(\GG)
        \end{array}
     \Biggr\rangle,
    \end{equation}

where $[a,b]_m:= \underbrace{abab \ldots}_{m}$ denotes the alternating product of $a$ and $b$ of length $m$.

\end{definition}

We refer 
to the second kind of relations as a \textit{dihedral relations}, and to the cycles they induce as \textit{dihedral cycles}.

\begin{example}
The class of Dyer groups contains other classes of well-studied groups. Indeed, a Dyer group $D=D(\GG,m,f)$ is
        \begin{itemize}
            \item a Coxeter group, if $f(v) = 2$ for all $v \in V(\GG)$;
            \item a RAAG, if $f(v) = \infty$ for all $v \in V(\GG)$;
            \item a graph product of cyclic groups, if $m(e) = 2$ for all $e \in E(\GG)$.
        \end{itemize}
Moreover, there exist Dyer groups that do not fall into any of the aforementioned subclasses; see Figure \ref{fig:dyer-gp-example} below for an example.
\begin{figure}[ht]
    \centering
    \input{media/dyer-gp-example}
    \caption{Example of a Dyer group which is neither a Coxeter group nor a graph product of cyclic groups, for $m>2$.}
    \label{fig:dyer-gp-example}
\end{figure}
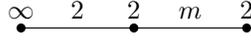
\end{example}

The generating set $X:= \{x_v \mid v \in V(\GG)\}$ which appears in the standard presentation \eqref{eq:dyerpres} is the \textit{standard Dyer generating set} of a Dyer group. For each generator $x_v \in X$, we use the shorthand
\begin{equation*}
    \ZZ_{f(v)} := \begin{cases}
        \ZZ/f(v)\ZZ, & \text{if } f(v) < \infty,\\
        \ZZ, & \text{if } f(v) = \infty.
    \end{cases}
\end{equation*}
We refer to the set 
\begin{equation*}
    \{x_v^\alpha \mid \alpha \in \ZZ_{f(v)} \setminus \{0\} \}
\end{equation*}
as the \textit{vertex group of $x_v$}. It will be convenient to consider two other generating sets: $S(X)$ and $X_S$, defined as follows. 

\begin{definition} 
\label{def:alternative-gen-sets}
Let $X$ be as above,
we define the following two sets:
\begin{itemize}
    \item The \textit{set of syllables} of $X$ \cite{PaS}, $S(X):=\{ x_v^{\alpha} \mid x_v\in X, \alpha \in \ZZ_{f(v)} \setminus \{0\} \}$;
    \medskip
    \item 
    $\begin{aligned}[t]
        X_S := \{x_v^{\alpha} \mid x_v\in X \text{ s.t. } f(v) < \infty, \alpha & \in \ZZ_{f(v)} \setminus \{0\}\} \\
        & \cup \{x_v^{\pm 1} \mid x_v\in X \text{ s.t. } f(v) = \infty\}.
    \end{aligned}$
\end{itemize}
\end{definition}

Clearly, both $S(X)$ and $X_S$ also generate the group $D$. It is important to note that because the vertex-labelling map $f$ may take the value $\infty$, $S(X)$ is \textit{not} necessarily finite; this is remedied by using $X_S$. Observe that if there are no vertices of infinite order, then $X_S$ coincides with $S(X)$, and that $X$ coincides with $S(X)$ for Coxeter groups and RAAGs, since every generator is respectively an involution or of infinite order.\\


For a fixed generating set $A$ of a Dyer group $D$, the pair $(D,A)$ is called a \textit{Dyer system}. It is then natural to consider the associated Cayley graph $\cayda$, whose relevant graph-theoretic properties we review in the following subsection.


\subsection{Mediangle graphs}
\label{subsec:graphs}
To set the notation, let $\Gamma=(V,E)$ be a connected graph with vertex set $V$ and edge set $E$. 
Recall that a \textit{path} in $\Gamma$ is a sequence of vertices such that any two consecutive vertices are adjacent, and that the \textit{length} of a path is equal to the number of edges it contains.

\begin{definition} 
    Let $x,y \in V$, the \textit{interval} between $x$ and $y$ is
    \begin{equation*}
        I(x,y) := \{z \in V \mid d(x,y) = d(x,z) + d(z,y)\},
    \end{equation*}
    where the distance between two points is equal to the length of a shortest path joining them.
    An interval thus corresponds to the union of all the geodesics between these two points.
\end{definition}

\begin{definition} 
    A subgraph $\Upsilon$  of $\Gamma$ is said to be \textit{convex} if $I(x,y) \subseteq \Upsilon \hspace{0.5em} \forall~x,y~\in ~V(\Upsilon)$.
\end{definition}


We now define a particular class of graphs: that of \textit{mediangle graphs}, as introduced in \cite{genevois}. It contains median graphs, quasi-median graphs and modular graphs as subclasses.

\begin{definition} 
\label{def:mediangle}
    A connected graph $\Gamma = (V,E)$ is said to be \textit{mediangle} if it satisfies the following four conditions:
    \begin{enumerate}
        \item \textbf{Triangle Condition:} For all vertices $o, x,y \in V$ satisfying $d(o,x) = d(o,y)$ and $d(x,y)=1$, there exists a common neighbour $z \in V$ of $x$ and $y$ such that $z \in I(o,x) \cap I(o,y)$.
        
        \item \textbf{Intersection of Triangles:}  $\Gamma$ does not contain an induced copy of $K_{4}^{-}$, that is, the complete graph $K_4$ with one edge removed.
        
        \item \textbf{Cycle Condition:}  For all vertices $o,x,y,z \in V$ satisfying $d(o,x)=d(o,y)=d(o,z)-1$ and $d(x,z)=1=d(y,z)$, there exists a convex cycle of even length that contains the edges $\{x,z\}, \{y,z\}$ and such that the vertex opposite $z$ belongs to $I(o,x) \cap I(o,y)$.
        
        \item \textbf{Intersection of Even Cycles:}  The intersection of any two convex cycles of even lengths contains at most one edge.
    \end{enumerate}
\end{definition}

Genevois' work on periagroups \cite{genevois, Genevois2} shows that this class of graphs describes the geometry of Dyer groups. Indeed:
\begin{theorem}[\cite{Genevois2}, corollary of Theorem $2.27$]
\label{thm:syllables-mediangle}
    Let $D=D(\GG,m,f)$ be a Dyer group, 
    then $\caydsx$ is a mediangle graph.
\end{theorem}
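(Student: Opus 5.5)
The plan is to deduce the statement from Genevois' general theorem on periagroups rather than verify the four conditions of Definition \ref{def:mediangle} by hand. Recall that a periagroup is defined from a simplicial graph $\Gamma$ with edge labels $\lambda : E(\Gamma) \to \NN_{\geq 2}$ and a collection of vertex groups $\{G_u\}_{u \in V(\Gamma)}$. Whenever $\lambda(\{u,v\}) > 2$, both $G_u$ and $G_v$ are required to have order two. The periagroup $\Pi$ is the quotient of the free product $\ast_u G_u$ by the relations $[a,b]_{\lambda(e)} = [b,a]_{\lambda(e)}$ for all $a \in G_u \setminus\{1\}$, $b \in G_v\setminus\{1\}$ and $e=\{u,v\}$. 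Theorem 2.27 of \cite{Genevois2} says that the Cayley graph of $\Pi$ with respect to $\bigcup_u G_u \setminus \{1\}$ is mediangle. So the proof consists of identifying $D$ with such a periagroup, with matching generating sets.

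The identification goes as follows. I take $\Gamma = \GG$ and $\lambda = m$, and set $G_v := \langle x_v \rangle \cong \ZZ_{f(v)}$. The compatibility assumption \eqref{eq:compatibility} is exactly the periagroup requirement that vertex groups attached to edges with label $\neq 2$ have order two. I then compare the two presentations.
\begin{itemize}
\item Free product of the $G_v$: these are generated by the $x_v$ subject to $x_v^{f(v)}=1$ when $f(v)<\infty$, which are the first relations of \eqref{eq:dyerpres}.
\item Edges with $m(e)>2$: both endpoints have order two, so each vertex group has a single nontrivial element, and the periagroup relation is precisely the dihedral relation of \eqref{eq:dyerpres}.
\item Edges with $m(e)=2$: the periagroup imposes $ab=ba$ for all nontrivial $a\in G_u$, $b\in G_v$, whereas $D$ only imposes $x_ux_v=x_vx_u$. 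These relation sets are equivalent, since commuting generators implies commuting of all their powers.
\end{itemize}
Hence the identity on generators induces an isomorphism $D \cong \Pi$. Under this isomorphism, $\bigcup_v G_v\setminus\{1\} = \{x_v^\alpha \mid \alpha \in \ZZ_{f(v)}\setminus\{0\}\} = S(X)$, so $\caydsx$ is isomorphic as a graph to the Cayley graph considered in Theorem 2.27. Applying that theorem yields that $\caydsx$ is mediangle.

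The only step requiring care is making sure the conventions match exactly. I need to check that Genevois' definition of the Cayley graph allows infinite (non locally finite) generating sets, which it does, since this is the syllable graph. I also need to confirm that it places edges between $g$ and $gs$ for every $s \in S(X)$, so that each coset $g\langle x_v\rangle$ spans a complete subgraph. If one instead wanted a self-contained argument, the main obstacle would be the Cycle Condition. There I would use the normal form and solution to the word problem of \cite{PaS}: two syllables $x_u^{\alpha}, x_v^{\beta}$ that both shorten $z$ relative to $o$ must either commute, giving a square, or generate a dihedral group of order $2m(e)$, giving a convex $2m(e)$-cycle. However, the reduction above avoids this.
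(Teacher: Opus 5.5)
Your proposal is correct and follows the paper's route: the paper gives no independent argument and simply imports the result from Genevois. The content of that import is your identification of $D$ with the periagroup on $(\GG,m)$ with cyclic vertex groups $\langle x_v\rangle\cong\ZZ_{f(v)}$, where the compatibility assumption \eqref{eq:compatibility} is the order-two requirement, $S(X)$ is the union of the vertex groups minus the identity, and the only step needing justification is that $x_ux_v=x_vx_u$ implies commutation of all powers when $m(e)=2$, which you handle correctly.
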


Finally, we collect some useful statements regarding the cycles of mediangle graphs, and more specifically of $\caydsx$.

\begin{lemma}
\label{lemma:facts-caydsx}
\begin{enumerate}[label=(\roman*)]
    \item \label{lemma:convex-dihedral}
        In $\caydsx$, a cycle of even length is convex if and only if it is dihedral.

    \item \label{lemma:labels-3-cycle}
        In $\caydsx$, every $3$-cycle has each of its edges labelled by the same vertex group. 

    \item \label{lemma:intersection3even}
        In a mediangle graph, the intersection of a convex even cycle $\CC$ and a $3$-cycle contains at most 
    \begin{equation*}
    \begin{cases}
        \text{a vertex, if } \ell(\CC) > 4; \\
        \text{an edge, if } \ell(\CC) = 4.
    \end{cases}
    \end{equation*}
\end{enumerate}
\end{lemma}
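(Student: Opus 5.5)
The three parts need separate arguments. Part (i) rests on Genevois' description of $\caydsx$. Part (ii) is a direct computation from the presentation. Part (iii) uses only the mediangle axioms of Definition \ref{def:mediangle}.

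\emph{Part (i).} I would invoke the structure theory underlying Theorem \ref{thm:syllables-mediangle}: in \cite{Genevois2}, the convex cycles of the Cayley graph of a periagroup with respect to its syllables are identified with cosets of the dihedral subgroups $\langle x_u, x_v\rangle$ for edges $\{u,v\}$. Concretely, these are the $2m(e)$-cycles read along the relation $[x_u,x_v]_{m(e)}[x_v,x_u]_{m(e)}^{-1}$, with syllables $x_u^{\alpha}, x_v^{\beta}$ when $m(e)=2$.
\begin{itemize}
\item For ``dihedral $\Rightarrow$ convex'', I would show that the coset $g\langle x_u,x_v\rangle$ is convex in $\caydsx$. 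This follows from the normal form theorem for Dyer groups \cite{PaS}: a reduced word between two elements of a parabolic coset only uses syllables from that parabolic. A geodesic cycle of length $2m$ whose antipodal pairs are at distance $m$ is then convex.
\item For the converse, a convex even cycle has consecutive edges carrying distinct vertex groups, since otherwise they would span a triangle, contradicting convexity. Convexity of geodesic arcs forces the labels to alternate between two adjacent vertices $u,v$, and the cycle closes exactly via the dihedral relation.
\end{itemize}
The converse is the main obstacle. There I would lean on the explicit cycle characterisation in \cite{Genevois2} rather than rederive it.

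\emph{Part (ii).} Let a $3$-cycle have labels $s_1,s_2,s_3\in S(X)$ with $s_1s_2s_3=1$. By the normal form theorem \cite{PaS}, a product of two syllables from distinct vertex groups has syllable length $2$, so it is not a single syllable. Hence $s_1,s_2$ lie in the same vertex group, and then so does $s_3=(s_1s_2)^{-1}$.

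\emph{Part (iii).} I would argue in a general mediangle graph. Suppose a $3$-cycle $T$ shares an edge $\{a,b\}$ with a convex even cycle $\CC$ of length at least $6$.
\begin{itemize}
\item The third vertex $c$ of $T$ cannot lie on $\CC$, since two vertices of $\CC$ at distance $1$ are adjacent along $\CC$ only if $\ell(\CC)=3$.
\item Let $a'$ be the neighbour of $a$ on $\CC$ other than $b$. Choose $o$ on $\CC$ antipodal to the edge $\{a,b\}$ side; this is possible since $\ell(\CC)>4$, so that $d(o,a)=d(o,b)$ fails. Alternatively, use the vertex $o$ opposite $b$ with $d(o,a)=d(o,b)-1$.
\item Apply the triangle and cycle conditions to $o,a,b,c$. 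This produces either a $K_4^-$ or a second convex even cycle meeting $\CC$ in two edges, contradicting the intersection conditions (2) or (4).
\end{itemize}
Sharing two vertices without the edge is impossible, as the vertices would be adjacent. When $\ell(\CC)=4$, the same argument shows at most one edge is shared, since two edges would give an induced $K_4^-$.
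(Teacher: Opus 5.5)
\textbf{Parts (i) and (ii).} These are essentially fine; the paper itself proves nothing here and simply cites \cite{genevois} for all three items. For (ii), your normal-form argument via \cite{PaS} is a correct and more self-contained alternative to the citation. For (i), you defer to Genevois for the converse, as the paper does. One correction: ``a geodesic $2m$-cycle whose antipodal pairs are at distance $m$ is convex'' is not a valid general principle. What actually works is this: for $m(e)\ge 3$ the coset $g\langle x_u,x_v\rangle$ \emph{is} the $2m$-cycle, and for $m(e)=2$ it is a product of two complete graphs, where the interval between two vertices at distance $2$ is exactly a square.

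\textbf{The gap is in (iii) for $\ell(\CC)\ge 6$.} The decisive sentence, that applying the triangle and cycle conditions to $o,a,b,c$ yields a $K_4^-$ or a second convex even cycle meeting $\CC$ in two edges, is asserted rather than derived. With $o$ on $\CC$ it cannot be derived.

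Write $\CC=(c_0=a,c_1=b,c_2,\dots,c_{2m-1})$ and let $c$ be the third vertex of the triangle. Convexity of $\CC$ forces $d(c,c_k)=1+\min\{d(a,c_k),d(b,c_k)\}$ for every $k\ne 0,1$; otherwise $c$ would lie in an interval between two vertices of $\CC$. Consequently, for every $o\in\CC$, each instance of the triangle or cycle condition among vertices of $\CC\cup\{c\}$ is already witnessed inside $\CC\cup\{c\}$. For example, with $o=c_{m+1}$ the triangle condition for $b,c$ is satisfied by $a$, and the $K_4^-$ axiom merely excludes the other candidate $c_2$. The cycle condition at $b$ is satisfied by $\CC$ itself. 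Since these axioms are purely existential, this step gives you no new information and hence no contradiction.

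Your justification for the choice of $o$ does not use $\ell(\CC)>4$ either: no vertex of any even cycle is equidistant from the two ends of an edge. The vertex $a'$ is never used. As a sanity check, nothing in your sketch distinguishes $\ell(\CC)=4$, yet there the configuration really occurs: the prism $K_3\times K_2$ is quasi-median, hence mediangle. A correct proof must use basepoints off $\CC$, starting with $c$ itself, from which $c_m$ and $c_{m+1}$ are adjacent and equidistant, and it must argue globally. This is what \cite[Proposition 3.7]{genevois} supplies.

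\textbf{A short fix for the Cayley graphs.} If you only need (iii) in $\caydsx$ or $\caygood$, which is where the paper applies it, there is a quick argument from (i), (ii) and the compatibility assumption \eqref{eq:compatibility}. A convex even cycle of length $2m(e)\ge 6$ is dihedral, and both of its labels are involutions. By (ii), a $3$-cycle through an edge labelled $x_u$ would need two nontrivial elements of $\langle x_u\rangle$ with nontrivial product. That is impossible when $x_u$ has order $2$.
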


\begin{proof}
    \begin{enumerate}[label=(\roman*)]
    All three results follow from \cite{genevois}, respectively Claim $4.13$, Lemma~$5.7$ and Proposition $3.7$.
    \end{enumerate}
\end{proof}

\begin{remark}
   It is useful to note that by construction, \ref{lemma:convex-dihedral} and \ref{lemma:labels-3-cycle} also hold for $\cy(D,X_S)$.
\end{remark}

We conclude this subsection with a key structural feature of mediangle graphs that will be useful in Section \ref{sec:FFTP-proof}: their \textit{hyperplanes}, as defined in \cite{genevois}. Hyperplanes play an important role in the study of Coxeter groups (see for instance \cite{davis_geometry_2025}) and of quasi-median graphs \cite{genevois_groups_2021}, which in particular include the Cayley graphs of graph products of cyclic groups and right-angled Coxeter groups. This is consistent with the unifying framework Dyer groups provide.

\begin{definition} 
\label{def:hyperplane}
    Let $\Gamma$ be a mediangle graph. A \textit{hyperplane} is an equivalence class of edges with respect to the transitive closure of the relation that identifies any two edges that belong to a common $3$-cycle or that are opposite in a convex even cycle.  
\end{definition}

 In particular for this work, they are useful in the study of paths, as shown in the following theorem from \cite[Theorem 3.9(iii)]{genevois}.

\begin{theorem} 
\label{thm:hyperplanegeod}
    In a mediangle graph, a path is geodesic if and only if it crosses each hyperplane at most once.
\end{theorem}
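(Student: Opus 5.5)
The plan is to reduce the statement to two structural facts about a hyperplane $J$ in a mediangle graph $\Gamma$:
\begin{enumerate}[label=(\alph*)]
\item \emph{Separation:} removing the edges of $J$ disconnects $\Gamma$.
\item \emph{Convexity:} each connected component of $\Gamma \setminus J$ (a \emph{sector} of $J$) is convex, and every geodesic crosses $J$ at most once.
\end{enumerate}
Once these hold, the theorem follows from a standard counting argument, exactly as for median and quasi-median graphs.

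\textbf{Fact (b) for geodesics.} Argue by contradiction with a minimal counterexample. Suppose some geodesic crosses a hyperplane twice, and pick one of minimal length. We may take it to be $\gamma = (x_0, x_1, \ldots, x_n)$ whose first edge $\{x_0,x_1\}$ and last edge $\{x_{n-1},x_n\}$ both lie in $J$, and no interior edge lies in $J$. Set $o := x_n$ and consider the distances $d(o,\cdot)$ along $\gamma$. Then analyse how $J$ propagates from $\{x_0,x_1\}$ toward $o$, one elementary move at a time:
\begin{itemize}
\item When two neighbours are at equal distance from $o$, the Triangle Condition supplies a common neighbour closer to $o$. The resulting $3$-cycle puts the new edges in the same hyperplane.
\item When two vertices at equal distance from $o$ share a neighbour one step farther, the Cycle Condition supplies a convex even cycle whose opposite edges are identified in $J$.
\end{itemize}
Repeating these moves transports the edge of $J$ to a position closer to $o$, producing a strictly shorter geodesic that still crosses $J$ twice. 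This contradicts minimality.

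To make each move well defined, use the two intersection conditions. The absence of an induced $K_4^-$, together with Lemma~\ref{lemma:facts-caydsx}\ref{lemma:intersection3even}, ensures that a $3$-cycle and a convex even cycle cannot force an edge to be identified with a non-opposite edge. The Intersection of Even Cycles condition ensures that two convex even cycles meet in at most one edge, so the notion of ``opposite edge'' is unambiguous. The same induction applied to an arbitrary path, rather than a geodesic, shows that a path between two vertices of the same sector can be replaced by a geodesic staying in that sector. This gives convexity, and separation follows since the endpoints of an edge of $J$ would otherwise lie in a common sector, whose convexity forbids the edge itself from being in $J$.

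\textbf{Deducing the theorem.} For vertices $x,y$, let $N(x,y)$ be the number of hyperplanes separating them. Any path from $x$ to $y$ must cross every separating hyperplane, so its length is at least $N(x,y)$. By fact (b), a geodesic crosses each hyperplane at most once, and any hyperplane it crosses separates $x$ from $y$. Hence $d(x,y) = N(x,y)$.

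\begin{itemize}
\item \emph{Geodesic paths cross each hyperplane at most once:} this is fact (b) directly.
\item \emph{Paths crossing each hyperplane at most once are geodesic:} let $\gamma$ be such a path from $x$ to $y$. Each hyperplane $\gamma$ crosses is crossed exactly once. By separation, the endpoints of that crossing edge lie in different sectors, and since $\gamma$ does not return, that hyperplane separates $x$ from $y$. So the length of $\gamma$ is at most $N(x,y) = d(x,y)$, and $\gamma$ is geodesic.
\end{itemize}

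\textbf{Main obstacle.} The hard step is the minimal-counterexample induction in fact (b). One must show that the propagation of $J$ through triangles and convex even cycles never ``turns back'' to produce an inconsistent identification. In particular, an edge must not end up identified with a non-opposite edge of some convex even cycle; equivalently, hyperplanes must not self-intersect or self-osculate. I would handle this by a careful case analysis at the vertex of $\gamma$ closest to $x_0$ where the distance to $o$ stops decreasing, using the Cycle Condition at that vertex and the two intersection conditions to rule out the degenerate configurations.
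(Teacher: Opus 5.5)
The paper does not prove this statement; it quotes it from Genevois [Theorem 3.9(iii)], so the comparison is with a citation. Your outer architecture is standard: separation plus ``no geodesic crosses a hyperplane twice'', followed by counting separating hyperplanes. The counting paragraph is correct, provided (a) is taken in the strong form that the two endpoints of every edge of $J$ lie in different components of $\Gamma\setminus J$.

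However, (a) and (b) are the entire content of the theorem, and the argument you sketch for them does not work. With $o=x_n$ you have $d(o,x_i)=n-i$. So no two consecutive vertices of $\gamma$ are equidistant from $o$, and no vertex of $\gamma$ is known to have two neighbours closer to $o$. Neither the Triangle Condition nor the Cycle Condition has anything to act on along $\gamma$; there is no first move.

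More fundamentally, the only thing linking $\{x_0,x_1\}$ to $\{x_{n-1},x_n\}$ is a chain of elementary moves (common $3$-cycles, opposite edges of convex even cycles). This chain may wander far from $\gamma$, and away from $o$ as well as toward it. Induction on the length of $\gamma$ gives no control over this chain. Nothing in your sketch produces a shorter geodesic whose first \emph{and} last edges both lie in $J$.

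Appealing to the no-$K_4^-$ and intersection axioms to stop an edge being ``identified with a non-opposite edge'' is circular. That hyperplanes do not self-intersect or self-osculate is essentially what is being proved.

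Your derivation of (a) has the same gap. Convexity of the vertex set of a sector cannot by itself forbid an edge of $J$ between two of its vertices, since $I(a,b)=\{a,b\}$. You need that two vertices of a sector are joined by a geodesic avoiding $J$, and that is again the unproved step.

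The missing ingredient is a metric invariant of $J$ that is checked move by move along chains. One option is the following.
\begin{enumerate}
\item Let $C$ be the maximal clique containing $\{x_0,x_1\}$. The Triangle Condition and the absence of induced $K_4^-$ show that $C$ is gated: every vertex $u$ has a unique nearest vertex $\pi(u)\in C$, and all other vertices of $C$ are one step farther.
\item Along your geodesic, $\pi(x_i)=x_1$ for all $i\ge 1$. So the contradiction follows once you prove that every edge of $J$ has endpoints with distinct gates.
\item That reduces to showing the property is preserved when passing to another edge of a common $3$-cycle, or to the opposite edge of a convex even cycle. This is where the Cycle Condition and both intersection axioms genuinely enter.
\item Proving conversely that edges outside $J$ have endpoints with equal gates makes $\pi$ constant on components of $\Gamma\setminus J$. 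Together with step 3, this gives (a) in the strong form you need.
\end{enumerate}
Without an argument of this kind, the proposal reduces the theorem to its hardest part and leaves that part unproved.
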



\subsection{Cone types}
\label{subsec:conetypes}
Although cone types do not play an explicit role in this work, we provide their definition here for completeness. 
Let $G$ be a finitely generated group with finite and symmetric generating set $S$, and denote by $\cay$ its Cayley graph. 

\begin{definition} 
\label{def:word-cone-type}
    Let $g\in G$, the \textit{(word)\footnote{There exists a coarser notion of cone types, that of \textit{graph cone types}, studied for instance in \cite{HowNag}.} 
    cone type} of $g$ is
    \begin{equation*}
        T(g):= \{h\in G \mid \ell_S(gh) = \ell_S(g) + \ell_S(h) \},
    \end{equation*}
    where $\ell_S(g):=\min\{n\in \NN \mid g=s_{1}\ldots s_{n}, s_{i} \in S\}.$
\end{definition}
    Intuitively, the cone type of an element is comprised of elements that extend it geodesically. For more details, see for instance \cite{Epstein} or \cite{ParkYau}.



\begin{definition}
    The pair $(G,S)$ is said to be a \textit{Cannon pair} if $\cay$ has finitely many cone types.
\end{definition}

It is known that having finitely many cone types implies regularity of the language of geodesics; the converse is true under an additional assumption. Indeed, Neumann and Shapiro showed that it holds if all relators have even length; see \cite[Section $5.6$]{NS-course} and \cite[Lemma $4$]{CET}. \\

As mentioned previously, examples of Cannon pairs can be found for instance in \cite{brinkhowlett}, \cite{grigorchuk_growth_2022} and \cite{LMW}, but we note that there are also pairs which are known \textit{not} to form Cannon pairs, such as the Lamplighter groups $L_m$ with the standard wreath product generating set and Thompson's group $F$ with its standard generating set \cite{CET}. Finally, having finitely many cone types is sensitive to the choice of generating set and thus is \textit{not} a quasi-isometric invariant, as shown by the following example, which can be found in \cite[Example $4.4.2$]{Epstein}.\\

\begin{example}
\label{ex:fmct-not-qi}
    Let $G$ be the split extension of $\ZZ^2 = \langle a,b\rangle$ by $\ZZ/2\ZZ=\langle t \rangle$. It admits the presentation
\begin{equation}
\label{eq:ex-fmct}
    \langle a,b,t \mid t^2=1, ab=ba, tat=b \rangle,
\end{equation}
to which we can add the generators $x=b^2$ and $y=ba$ to get
\begin{equation*}
\label{eq:ex-no-fmct}
    \langle a,b,t,x,y \mid t^2=1, ab=ba, tat=b, x=b^2, y=ba\rangle.
\end{equation*}
It can be shown that $(G, \{a,b,t,x,y\})$ does not have a regular language of geodesics, but that $(G, \{a,b,t\})$ does. Since its relations have even length, the latter is also a Cannon pair.
\end{example}


\subsection{Falsification by fellow-traveller property (FFTP)}
In this section, we follow the exposition of \cite{elder1}.
Let $\Gamma = (V,E)$ be a connected graph with vertex set $V$ and edge set $E$. 
Given a finite path $\gamma~=~\{v_0,~\ldots,~v_n\}$ in $\Gamma$, we will regard it as a function $\gamma: \NN \to V$ such that 
\begin{equation*}
       \gamma(i) = \begin{cases}
            v_i, & \text{ for } 0 \leq i \leq n, \\
            v_n, & \text{ for } i > n.
        \end{cases}
    \end{equation*}


\begin{definition} 
    Two paths $\gamma$ and $\alpha$ in $\Gamma$ 
    are said to \textit{$k$-fellow travel} if 
    \begin{equation*}
        d \big( \gamma(t),\alpha(t) \big) \leq k \quad \forall t \geq 0.
    \end{equation*}
    They are said to \textit{asynchronously $k$-fellow travel} if there is a non-decreasing proper continuous function $\phi : [0, +\infty) \to [0, +\infty)$ such that
    \begin{equation*}
        d \big( \gamma(t),\alpha(\phi(t)) \big) \leq k \quad \forall t \geq 0.
    \end{equation*}
\end{definition}

\begin{definition} 
\label{def:fftp}
    The graph $\Gamma$ satisfies the \textit{(asynchronous) falsification by fellow-traveller property ((A)FFTP)} if there exists a constant $k$ such that any non-geodesic path in $\Gamma$ is (asynchronously) $k$-fellow-travelled by a shorter path with the same endpoints.
\end{definition}

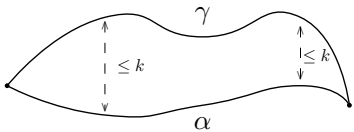
\begin{figure}[ht]
    \centering
    \scalebox{1.3}{\input{media/fftp}}
    \caption{The paths $\gamma$ and $\alpha$ $k$-fellow travel.}
    \label{fig:fftp}
\end{figure} 

We emphasise that the new shorter path $\alpha$ need not be geodesic. Elder proved in \cite{elder2} the equivalence between the asynchronous and the synchronous versions of this property; consequently, we will henceforth only consider the FFTP (the \textit{synchronous} version).

\begin{definition}
    Let $G$ be a finitely generated group and $S$ a finite and symmetric generating set, the pair $(G,S)$ is said to \textit{have the FFTP} if $\cay$ does.
\end{definition}
We note that having the FFTP is \textit{not} a quasi-isometric invariant either; it is sensitive to the choice of generating set. Indeed, consider the following example due to Cannon and detailed in \cite{elder1}. 

\begin{example}
\label{example:fftp-not-qi}
Let $G$ be the same virtually abelian group as in Example \ref{ex:fmct-not-qi}. By performing the Tietze transformation of removing the generator $b=tat$ from~\eqref{eq:ex-fmct}, it admits the presentation
\begin{equation*}
\label{eq:ex-no-fftp}
    \langle a,t \mid t^2=1, tata=atat\rangle.
\end{equation*}
It can be shown that $(G, \{a,b,t\})$ has the FFTP, but $(G, \{a,t\})$ does not. 
We note however that the latter does have regular language of geodesics and finitely many cone types.
\end{example}

We conclude this section by recalling that the FFTP is closely linked to the notions of cone types and regular language of geodesics. Namely, Neumann and Shapiro established the following result; see \cite[Proposition $4.1$]{NS} and \cite[Section $5.6$]{NS-course}.
\begin{theorem} 
\label{thm:NS}
Let $G$ be a finitely generated group with generating set $S$.
If $\cay$ has the FFTP, then $\cay$ has finitely many cone types.
\end{theorem}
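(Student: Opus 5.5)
Theorem \ref{thm:NS} is the classical result of Neumann and Shapiro, and the plan is to reproduce their argument in the present language. Fix the FFTP constant $k$ for $\cay$, so every non-geodesic path is $k$-fellow-travelled by a strictly shorter path with the same endpoints. For $g \in G$, define the \emph{$k$-tail} (or $k$-neighbourhood type) of $g$ as the function
\begin{equation*}
    \tau_k(g) : B(k) \to \ZZ, \qquad \tau_k(g)(h) = \ell_S(gh) - \ell_S(g),
\end{equation*}
where $B(k)$ is the ball of radius $k$ about the identity in $\cay$. Each value lies in $[-k,k]$ and $B(k)$ is finite because $S$ is finite, so $\tau_k$ takes only finitely many values. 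It therefore suffices to show that $\tau_k(g)$ determines the cone type $T(g)$. That is, if $\tau_k(g) = \tau_k(g')$, then $T(g) = T(g')$.

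To prove this, suppose $\tau_k(g)=\tau_k(g')$ but some $h \in T(g)\setminus T(g')$. Choose a geodesic word $w$ for $h$ and a geodesic word $u'$ for $g'$. Among all such counterexamples, pick one with $|w|$ minimal. Then $u'w$ is a non-geodesic path, but every proper prefix $u'w_{\le i}$ with $i<|w|$ is geodesic, since the prefix $w_{\le i}$ of $w$ lies in $T(g')$ by minimality. Apply the FFTP to the path $u'w$ from $1$ to $g'h$. It is $k$-fellow-travelled by a strictly shorter path $\alpha$ with the same endpoints.

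The key step is to transfer this shortening to $g$. Let $t_0$ be the time $|u'|$ at which the path $u'w$ passes through $g'$. After time $t_0$, the path $u'w$ is $g'w_{\le t-t_0}$, and $\alpha(t) = g'w_{\le t-t_0}\,\delta_t$ with $\delta_t \in B(k)$. Using $\tau_k(g)=\tau_k(g')$, and choosing $t_0$ carefully, one compares lengths: $\ell_S(g'\delta) - \ell_S(g')$ equals $\ell_S(g\delta)-\ell_S(g)$ for $\delta\in B(k)$. The aim is to show $\ell_S(gh) < \ell_S(g)+|w|$, contradicting $h\in T(g)$. One starts from a geodesic to $g\,\delta_{t_0}$ of length $\ell_S(g) + (\ell_S(g'\delta_{t_0})-\ell_S(g'))$. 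One then follows the translate by $g(g')^{-1}$ of the tail of $\alpha$ after the point $\alpha(t_0)$. The total length is $\ell_S(g)+|w| - (|u'w|-|\alpha|) + \big(\ell_S(g'\delta_{t_0}) - (|\alpha|\text{ up to } t_0) + \ldots\big)$. One needs the portion of $\alpha$ up to time $t_0$ to have length at least $\ell_S(g'\delta_{t_0})$, which holds trivially.

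The main obstacle is this bookkeeping. The path $\alpha$ is parametrised synchronously, so up to time $t_0$ it has length at most $t_0$. It may have stopped earlier, and after $t_0$ it may be shorter than $w$. I would handle this by splitting the total saving $|u'w|-|\alpha|\ge 1$ into the saving before and after $t_0$. The saving after $t_0$ transfers verbatim under left translation, since translation is an isometry of $\cay$. The saving before $t_0$ is replaced by the exact value $\ell_S(g\delta_{t_0})=\ell_S(g)+\ell_S(g'\delta_{t_0})-\ell_S(g')$, which is at most $\ell_S(g)+(\text{length of }\alpha|_{[0,t_0]})-|u'|$. Summing these gives a path from $1$ to $gh$ of length less than $\ell_S(g)+|w|$, the desired contradiction. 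Hence $T(g)\subseteq T(g')$, and $T(g')\subseteq T(g)$ follows by symmetry. The number of cone types is then bounded by the number of possible $k$-tails, which is finite.
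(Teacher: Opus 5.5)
Your proof is correct. It is the standard Neumann--Shapiro argument, which the paper cites rather than reproves: the cone type of $g$ is determined by its $k$-tail $h\mapsto \ell_S(gh)-\ell_S(g)$ on $B(k)$. The minimal-counterexample step is harmless but never used, since the single comparison at time $t_0=|u'|$ already gives $\ell_S(gh)\le \ell_S(g)+|\alpha|-|u'|<\ell_S(g)+|w|$ without knowing that the proper prefixes of $u'w$ are geodesic (and $t_0$ needs no careful choice).
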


We remark that Antolín provided in \cite{Antolin} another proof of Theorem \ref{thm:NS},
by explicitly constructing the corresponding automata. Moreover, it follows from Example \ref{example:fftp-not-qi} that the converse statement does not hold.



\section{Dyer groups have the FFTP}
\label{sec:FFTP-proof}
To fix the notation, let $D=D(\GG,m,f)$ be a Dyer group as in Definition \ref{def:dyer}. This section is dedicated to the proof of our main result: the Dyer system $(D,X_S)$ has the FFTP, where $X_S$ is the generating set defined in Definition \ref{def:alternative-gen-sets}. The argument is divided into two steps: we first establish that $\caygood$ is a locally finite mediangle graph (Subsection \ref{subsec:xs-mediangle}), and then that $(D,X_S)$ has the FFTP (Subsection \ref{subsec:xs-fftp}).

\subsection{$\caygood$ is a mediangle graph}
\label{subsec:xs-mediangle}
It is known, by a result of Genevois (see Theorem \ref{thm:syllables-mediangle}), that $\caydsx$ is a mediangle graph. However since $S(X)$ may be infinite, 
$\caydsx$ need not be locally finite. In contrast, the generating set $X_S$ is finite and thereby gives rise to the locally finite Cayley graph $\caygood$.
In the following proposition, we show that it is a mediangle graph.

\begin{proposition}
\label{prop:cayd-mediangle}
    Let $D=D(\GG,m,f)$ be a Dyer group, then $\caygood$ is a mediangle graph.
\end{proposition}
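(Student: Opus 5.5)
The plan is to compare $\caygood$ with $\caydsx$, which is mediangle by Theorem \ref{thm:syllables-mediangle}, and to transfer the four conditions of Definition \ref{def:mediangle}. The two graphs share the same vertex set, and they differ only along cosets $g\langle x_v\rangle$ with $f(v)=\infty$. In $\caydsx$ such a coset spans a complete graph. In $\caygood$ it spans a bi-infinite line, with edges labelled $x_v^{\pm1}$. By the compatibility assumption \eqref{eq:compatibility}, every edge of $\GG$ at an infinite-order vertex $v$ has label $m=2$. Hence $x_v$ interacts with the other generators only through commutation relations, and this is what I will exploit throughout.

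\textbf{Step 1: a distance formula.} Using the normal form theory for Dyer groups \cite{PaS}, every element $g$ has a syllable-reduced expression $x_{v_1}^{\alpha_1}\cdots x_{v_n}^{\alpha_n}$. Any two such expressions are related by commutations and dihedral moves. These moves never merge, split, or alter syllables at infinite-order vertices, because those vertices only commute. I will therefore show
\begin{equation*}
d_{X_S}(1,g)=\sum_{f(v_i)<\infty}1+\sum_{f(v_i)=\infty}|\alpha_i|,
\end{equation*}
and that the $X_S$-geodesics are exactly the $S(X)$-geodesics in which each infinite syllable $x_v^{\alpha}$ is subdivided into $|\alpha|$ steps of the same sign. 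Equivalently, by Theorem \ref{thm:hyperplanegeod}, each hyperplane of $\caydsx$ dual to an infinite vertex-group clique becomes a family of parallel hyperplanes, one for each step. I expect this to be the main technical obstacle, since every later verification rests on it. I would first try to prove it by checking that each move of the solution to the word problem preserves the weighted length, and that any cancellation inside an infinite syllable strictly decreases it.

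\textbf{Step 2: local structure.} Here I classify the $3$-cycles and the convex even cycles of $\caygood$. A $3$-cycle cannot contain an edge labelled $x_v^{\pm1}$ with $f(v)=\infty$, since $x_v^{\pm2}\notin X_S$. Together with Lemma \ref{lemma:facts-caydsx}\ref{lemma:labels-3-cycle}, this means $3$-cycles are exactly those of $\caydsx$ lying inside finite vertex-group cosets. For even cycles, the Remark following Lemma \ref{lemma:facts-caydsx} gives that convex even cycles are dihedral. These are either dihedral cycles among finite-order generators, which are unchanged from $\caydsx$, or commutation squares involving some $x_v^{\pm1}$. The convexity of these squares follows from the distance formula of Step 1. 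With this classification, the \emph{Intersection of Triangles} and \emph{Intersection of Even Cycles} conditions follow. An induced $K_4^-$ would consist of two $3$-cycles in finite vertex-group cosets, and would therefore already appear in $\caydsx$. Two convex even cycles sharing two edges would either share two edges in $\caydsx$ as well, or be commutation squares, and the latter are determined by any two adjacent edges.

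\textbf{Step 3: the Triangle and Cycle Conditions.} Take $o,x,y,z$ as in Definition \ref{def:mediangle}. If all the relevant edges carry finite-order labels, I transport the configuration to $\caydsx$. Step 1 guarantees that the needed distance relations are preserved or can be reinterpreted, so I can take the common neighbour or convex cycle provided there. I then check, again with the distance formula, that the vertex opposite $z$ still lies in the required intervals in $\caygood$.

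The remaining case is when one of the edges $\{x,z\}$, $\{y,z\}$ is labelled $x_v^{\pm1}$ with $f(v)=\infty$. In this case the two last letters of geodesics from $o$ to $z$ through $x$ and through $y$ must commute. By Step 1 they can be reordered, which yields a commutation square with the vertex opposite $z$ at distance $d(o,z)-2$ from $o$. The Triangle Condition cannot involve such an edge at all, because there $x$ and $y$ are equidistant from $o$ and adjacent. Since an infinite-order step changes the weighted length by exactly one, such an edge cannot join two vertices at the same distance from $o$, so this case never arises.
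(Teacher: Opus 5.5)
Your proposal is correct in outline and takes a genuinely different route from the paper. The paper never computes $d_{X_S}$. It verifies each axiom by induction on $n$, places the configuration in $\caydsx$, and splits according to how $d_{S(X)}$ compares with $d_{X_S}$. It then uses the triangle and cycle conditions there, together with Lemma~\ref{lemma:facts-caydsx} and the compatibility assumption, either to build the required vertex or cycle or to reach a contradiction. You instead put all the combinatorics into the weighted length $\lambda$ obtained from the Paris--Soergel solution to the word problem \cite{PaS}, after which the axioms become essentially local. This costs you the normal form theory of \cite{PaS} on top of Genevois' theorem, but it buys several things.

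\emph{Interval inclusion.} It actually proves $I_{S(X)}(a,b)\subseteq I_{X_S}(a,b)$: concatenating reduced syllable words makes the weights add. The paper uses this inclusion throughout, but it does not follow from $E(\caygood)\subseteq E(\caydsx)$ alone.

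\emph{Transfer of the finite-label cases.} It shows that an edge labelled $x_v^{\pm1}$ with $f(v)=\infty$ never joins two vertices equidistant from $o$. It also shows that right multiplication by a finite-order syllable changes $\lambda$ and $\ell_{S(X)}$ by the same amount, since their difference depends only on the infinite syllables of a reduced expression, which such a multiplication does not touch. Hence the triangle condition and the finite-label case of the cycle condition are copied from $\caydsx$.

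\emph{Fewer cases.} It replaces the paper's subcases, including $q=o$, $r=o$ and the fourth axiom, which are treated only by analogy there, with the single observation that an infinite-order edge at $z$ yields a commutation square.

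Two points need fixing.

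First, the second claim of Step 1 is false. If $v,w$ are adjacent and $f(v)=\infty$, the word $x_vx_wx_v$ is an $X_S$-geodesic for $x_v^2x_w$ (weight $3$). It is not a subdivision of an $S(X)$-geodesic, since as a syllable word it has length $3>\ell_{S(X)}(x_v^2x_w)=2$; in $\ZZ^2$ it is a staircase path in the grid. Nothing later needs this claim, so drop it. Note also that Theorem~\ref{thm:hyperplanegeod} cannot be applied in $\caygood$ before the proposition is proved.

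Second, in Case B, ``by Step 1 they can be reordered'' does not follow from the distance formula; you need a descent argument. Set $h=o^{-1}z$ and $a=x_v^{\pm1}$. The inequality $\lambda(ha^{-1})<\lambda(h)$ gives a reduced expression of $h$ ending with a syllable $x_v^{\alpha}$ of the same sign as $a$. Likewise, $\lambda(hb^{-1})<\lambda(h)$ gives one ending with a syllable $x_w^{\gamma}$ in the vertex group of $b$. Since $x_v^{\alpha}$ only takes part in commutations, follow it along the braid moves from the first expression to the second. This shows that every syllable to its right lies in the vertex group of a neighbour of $v$. Hence $b$ commutes with $a$, and $h$ has a reduced expression ending in $x_v^{\alpha}x_w^{\gamma}$; the opposite vertex $za^{-1}b^{-1}$ then has weight $\lambda(h)-2$. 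The case $b=a^{-1}$ is excluded because $\lvert\alpha+1\rvert$ and $\lvert\alpha-1\rvert$ cannot both equal $\lvert\alpha\rvert-1$. Alternatively, apply the cycle condition in $\caydsx$ to $o$, $z$ and the two neighbours of $z$ obtained by deleting these final syllables. Then use Lemma~\ref{lemma:facts-caydsx}\ref{lemma:convex-dihedral} together with compatibility, which forces $m=2$.
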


\begin{proof}
To prove that $\caygood$ is a mediangle graph, we must verify the four axioms of Definition \ref{def:mediangle}. The idea of the proof is twofold: we use that $\caydsx$ is a mediangle graph, and compare $\caygood$ and $\caydsx$, in the sense that
        \begin{align*}
            V(\caygood) & = V(\caydsx) \hspace{0.5em} (=D), \\
            E(\caygood) & \subseteq E(\caydsx),
        \end{align*}
    where the edges in $E(\caydsx) \setminus E(\caygood)$ are, by construction, those corresponding to generators of $S(X)$ of the form $x_v^\alpha$, for $v \in V$ such that $f(v)=\infty$ and $\lvert \alpha \rvert \geq 2$. In particular, it follows that for any elements $a,b,c \in D$,
\begin{equation*}
\label{eq:distances}
  d_{X_S}(a,b) \geq d_{S(X)}(a,b)
\end{equation*}
and 
\begin{equation*}
    c \in I_{S(X)}(a,b) \implies c \in I_{X_S}(a,b).
\end{equation*}\\
    
    \begin{enumerate}
        \item \textbf{Triangle Condition:} Let $o,g,h \in D$ such that 
        \begin{equation}
        \label{eq:newgens-trianglecond}
        tri_n(o,g,h):=
        \begin{cases}
            d_{X_S}(o,g) = d_{X_S}(o,h) = n,\\
            d_{X_S}(g,h)=1.
        \end{cases}
        \end{equation}
We show by induction on $n$ that there exists in $\caygood$ a common neighbour $z$ of $g$ and $h$ such that $z \in I_{X_S}(o,g) \cap I_{X_S}(o,h)$, as depicted in Figure \ref{fig:new-gens-triangle-setup}. The base case $n=1$ is straightforward. Assume the triangle condition holds for any three vertices in $\caygood$ satisfying $tri_{n'}$ for $n' <n$ and let $o,g,h$ be as in \eqref{eq:newgens-trianglecond}. We further assume $e_{g_i} \neq e_{h_i}$ for all $i \in \{1,\ldots, n-1\}$, for otherwise we would be done by induction.
Consider these vertices $o,g,h$ in $\caydsx$; we distinguish two cases up to symmetry.\\

\begin{figure}
    \centering
    \input{media/new-gens-triangle-setup}
    \caption{The triangle condition in $\caygood$, as formulated in \eqref{eq:newgens-trianglecond}.}
    \label{fig:new-gens-triangle-setup}
\end{figure} 

\begin{enumerate}
    \item \label{proof:trianglecond-case-a} If $d_{S(X)}(o,g) = d_{S(X)}(o,h) =l \leq n$, then applying the triangle condition 
    to $o,g,h$ yields a common neighbour $z$ of $g$ and $h$ in $\caydsx$, such that $z \in I_{S(X)}(o,g) \cap I_{S(X)}(o,h)$ (so, $d_{S(X)}(o,z)=l-1$). The vertices $g,h,z$ induce a $3$-cycle, all edges of which are labelled, by Lemma \ref{lemma:facts-caydsx}\ref{lemma:labels-3-cycle}, by the vertex group of a generator $x_v \in X$, say
        \begin{equation*}\hspace{4em}
            e_{g,h} = x_v^\alpha, \quad e_{g,z} = x_v^\beta, \quad e_{h,z} = x_v^\eta, \quad \text{for } \alpha, \beta, \eta \in \ZZ_{f(v)} \setminus \{0\}.
        \end{equation*}
    If $f(v) < \infty$, then $e_{g,z}, e_{h,z} \in X_S$ and we are done. On the other hand, if $f(v) = \infty$, then $e_{g,h} \in X_S$ implies that $\lvert \alpha \rvert = 1$. 
    Moreover, $\lvert \beta \rvert = \lvert \eta \rvert$ since $d_{X_S}(o,g)=d_{X_S}(o,h)$ and $z\in I_{X_S}(o,g) \cap I_{X_S}(o,h)$. This yields a closed path of length $2 \lvert \beta \rvert \pm 1$ in $\caygood$, where each edge is labelled by $x_v$; see Figure \ref{fig:new-gens-triangle-a}. This induces the relation $x_v^{2 \lvert \beta \rvert \pm 1} = e$, which, due to $f(v) = \infty$, implies that  $2\lvert \beta \rvert \pm 1 = 0$, which is impossible. \\



\begin{figure}[!tbp]
\centering
\begin{subfigure}{0.3\textwidth}
    \resizebox{0.85\linewidth}{!}{\input{media/new-gens-triangle-a}}
    \caption{Case \eqref{proof:trianglecond-case-a} of the triangle condition in $\caydsx$.}
    \label{fig:new-gens-triangle-a}
\end{subfigure}
\hfill
\begin{subfigure}{0.3\textwidth}
    \resizebox{0.85\linewidth}{!}{\input{media/new-gens-triangle-b}}
    \caption{Case \eqref{proof:trianglecond-case-b} of the triangle condition in $\caydsx$.}
    \label{fig:new-gens-triangle-b}
\end{subfigure}
\hfill
\begin{subfigure}{0.3\textwidth}
    \resizebox{0.85\linewidth}{!}{\input{media/new-gens-triangle-b-i}}
    \caption{The convex even cycle $\CC$ of case \eqref{proof:trianglecond-case-b}\ref{proof:trianglecond-case-b-i}, seen in $\caygood$.}
    \label{fig:new-gens-triangle-b-i}
\end{subfigure}

\caption{Details of the induction step of the triangle condition.}
\label{fig:new-gens-triangle}
\end{figure}
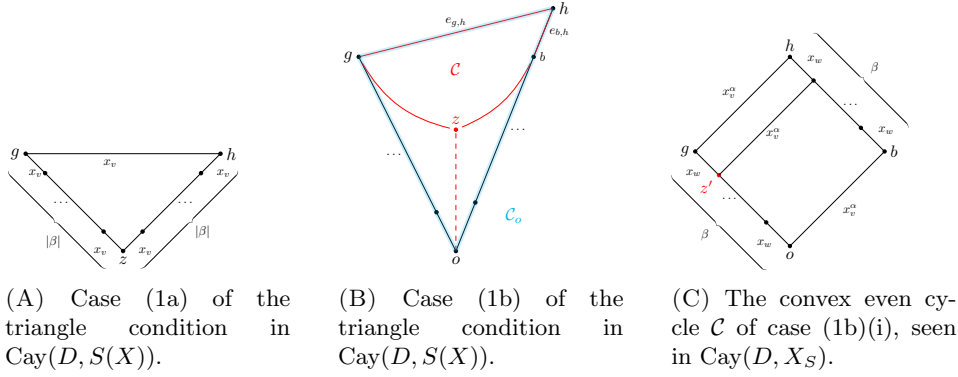


    \item \label{proof:trianglecond-case-b} 
    Suppose that $d_{S(X)}(o,g) = d_{S(X)}(o,h) - 1$.
    Let $b$ be a predecessor of $h$ in $\caydsx$, so that $d_{S(X)}(o,g) = d_{S(X)}(o,b)$, then applying the cycle condition to the vertices $o,g,h,b$ yields a convex even cycle {\color{red} $\CC$} containing the edges $\{g,h\}$ and $\{b,h\}$, and with $z~\in~I_{S(X)}(o,g)~\cap ~I_{S(X)}(o,b)$. Moreover, denote by {\color{Cyan} $\CC_o$} the cycle of even length passing through the vertices $o,g,b,h$. This configuration is depicted in Figure \ref{fig:new-gens-triangle-b}. We distinguish two further subcases, whether $\CC_o$ is convex or not. 

        \begin{enumerate}[label=(\roman*)]
            \item \label{proof:trianglecond-case-b-i}
            If $\CC_o$ is convex, then it coincides with $\CC$
            by the intersection of even cycles property. Recall that $\CC$ is a dihedral cycle by Lemma \ref{lemma:facts-caydsx}\ref{lemma:convex-dihedral}, which is defined by the two consecutive edges $e_{g,h}$ and $e_{b,h}$. Since $e_{g,h} \in X_S$ and $d_{S(X)}(o,h) \neq d_{X_S}(o,h)$,
            we deduce that $e_{b,h} \in S(X) \setminus X_S$. More precisely, 
                \begin{equation*} \hspace{3em}
                \begin{split}
                    e_{g,h} & = x_v^\alpha, \text{ where } f(v)<\infty \text{ and } \alpha \in \ZZ_{f(v)} \setminus \{0\}, \\ 
                    e_{b,h} & = x_w^\beta, \text{ where } f(w) = \infty \text{ and } \lvert \beta \rvert \geq 2,
                \end{split}
                \end{equation*}
            and the vertices $v,w\in V(\GG)$ are adjacent in the Dyer graph. By the compatibility assumption \eqref{eq:compatibility}, $\CC$ is a convex cycle of length~$4$ in $\caydsx$.
            This dihedral relation 
            induces $[x_v^\alpha,x_w]_2~=~[x_w,x_v^\alpha]_2$, which yields a convex even cycle, in $\caygood$, based at $z'$; see Figure \ref{fig:new-gens-triangle-b-i}. As $z' \in I_{X_S}(o,g)\cap I_{X_S}(o,h)$, it follows that $d_{X_S}(o,g) < d_{X_S}(o,h)$, which contradicts the hypothesis \eqref{eq:newgens-trianglecond}.


        \item If $\CC_o$ is not convex, then the convex cycle $\CC$ is strictly smaller. By construction, $e_{g,h} \in X_S$ and we further distinguish two cases: $e_{b,h} \in S(X) \setminus X_S$ and $e_{b,h} \in X_S$.
                \begin{itemize}
                    \item If $e_{b,h} \in S(X) \setminus X_S$, then an analogous argument to the one above leads to the contradiction $d_{X_S}(o,g) < d_{X_S}(o,h)$.
                    \item If $e_{b,h} \in X_S$, the cycle $\CC$ is also a convex even cycle in $\caygood$. It follows that $d_{X_S}(z,g) < d_{X_S}(z,h)$, which implies $d_{X_S}(o,g) < d_{X_S}(o,h)$ since $z \in I_{X_S}(o,g) \cap I_{X_S}(o,h)$, contradicting \eqref{eq:newgens-trianglecond}. \\
                \end{itemize}    
        \end{enumerate}
\end{enumerate}

        \item \label{proof:intersection-triangles}
        \textbf{Intersection of Triangles:} Assume for contradiction that there exist $g,h,p,q\in D$ that span an induced copy of $K_4^-$ in $\caygood$, as in Figure \ref{fig:new-gens-k4}. Consider the subgraph induced by these four vertices in $\caydsx$. By the intersection of triangles property, 
 these vertices span an induced copy of $K_4$ in $\caydsx$ and $\{p,q\} \in E(\caydsx) \setminus E(\caygood)$. By Lemma \ref{lemma:facts-caydsx}\ref{lemma:labels-3-cycle}, each edge is labelled by the vertex group of a generator $x_v \in X$. By construction, we know that
        \begin{equation*}
            e_{g,p}, e_{h,p}, e_{g,q}, e_{q,h}, e_{g,h} \in X_S \quad \text{ and } \quad
            e_{p,q} \in S(X) \setminus X_S,
        \end{equation*}
        which implies $f(v)=\infty$ and
        \begin{equation*}
        \begin{split}
            & e_{g,p}, e_{h,p}, e_{g,q}, e_{q,h}, e_{g,h} \in  \{x_v, x_v^{-1} \}, \\
            & e_{p,q} = x_v^\alpha, \text{ for } \alpha \in \ZZ_{f(v)} \setminus \{0\} \text{ s.t. } \lvert \alpha \rvert \geq 2.
        \end{split}
        \end{equation*}
        It follows that the vertices $g,h,p$ (for instance) induce in $\caygood$ a 3-cycle with each of its edges labelled by either $x_v$ 
        or $x_v^{-1}$, 
        generators of infinite order, which is impossible. 
        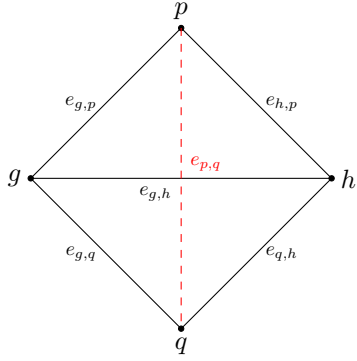
\begin{figure}[ht]
    \centering
    \input{media/new-gens-k4}
    \caption{The subgraph spanned by $g,h,p,q\in D$. The black edges appear in $\caygood$, hence  $e_{g,p}, e_{h,p}, e_{g,q}, e_{q,h}, e_{g,h} \in X_S$. The whole $K_4$ must appear in $\caydsx$, so $e_{p,q} \in X_S \setminus S(X)$.}
    \label{fig:new-gens-k4}
\end{figure}

        \item \textbf{Cycle Condition:} Let $o,g,h,z \in D$ such that 
        \begin{equation}
        \label{eq:newgens-cyclecond}
        cycl_n(o,g,h,z):=
        \begin{cases}
            d_{X_S}(o,g) = d_{X_S}(o,h) = d_{X_S}(o,z) - 1 = n,\\
            d_{X_S}(g,z)=1 = d_{X_S}(h,z).
        \end{cases}
        \end{equation}    
We show by induction on $n$ that there exists a convex even cycle in $\caygood$ that contains the edges $\{g,z\}$ and $\{h,z\}$ and such that the vertex opposite $z$ belongs to $I_{X_S}(o,g)\cap I_{X_S}(o,h)$, as illustrated in Figure \ref{fig:new-gens-cycle-setup}. 
\begin{figure}
\input{media/new-gens-cycle-setup}
\captionof{figure}{The triangle condition in $\caygood$, as formulated in \eqref{eq:newgens-cyclecond}.}
\label{fig:new-gens-cycle-setup}
\end{figure}
The base case $n=1$ splits into two subcases, as follows.

    \begin{enumerate}[label=(\roman*)]
        \item \label{proof:cycle-n=1-i}
        If $d_{S(X)}(o,g) = d_{S(X)}(o,h) =1 = d_{S(X)}(o,z) -1$, then the cycle condition in $\caydsx$ yields a convex even cycle defined by the edges $\{g,z\}$ and $\{h,z\}$. Since all the edges are in $X_S$, it is a cycle in $\caygood$ satisfying the required properties.
        
        \item \label{proof:cycle-n=1-ii}
        If $d_{S(X)}(o,g) = d_{S(X)}(o,h) =d_{S(X)}(o,z) = 1$, then $\{g,h\} \in E(\caydsx)$, for otherwise $\caydsx$ would contain an induced copy of $K_4^-$.~The remainder of the proof follows similarly to that of the intersection of triangles property \eqref{proof:intersection-triangles}.
    \end{enumerate}
Suppose the cycle condition holds in $\caygood$ for any tetrad of vertices in $\caygood$ satisfying $cycl_{n'}$ for $n' < n$ and let $o,g,h,z$ be as in \eqref{eq:newgens-cyclecond}. 
We further assume $e_{g_i} \neq e_{h_i}$ for all $i \in \{1,\ldots, n-1\}$, for otherwise we would be done by induction. Consider the vertices $o,g,h,z$ in $\caydsx$ and fix $d_{S(X)}(o,z) = l$ ($l \leq n+1$). Since $d_{X_S}(o,g)$, $d_{X_S}(o,h) < d_{X_S}(o,z)$, there are, up to symmetry, three possible subcases.\\

    \begin{enumerate}[label=(\alph*)]
        \item \label{proof:cyclecond-i} Suppose that $d_{S(X)}(o,g)= d_{S(X)}(o,h) = l-1$. The cycle condition in $\caygood$ follows from the cycle condition
        in $\caydsx$, as in Case \ref{proof:cycle-n=1-i} of the base case above. \\
        
        \item \label{proof:cyclecond-ii} Suppose that $d_{S(X)}(o,g)= l, d_{S(X)}(o,h) = l-1$. The triangle condition applied to the vertices $o, g,z$ in $\caydsx$ yields a common neighbour $p$ of $g$ and $z$ such that $p \in I_{S(X)}(o,g) \cap I_{S(X)}(o,z)$, as in Figure \ref{fig:new-gens-cycle-ii}. By Lemma \ref{lemma:facts-caydsx}\ref{lemma:labels-3-cycle}, the edges of this $3$-cycle 
        are labelled by the vertex group of a generator $x_v \in X$, 
            \begin{equation*}
            \label{proof:cycle-cond-ii-triangle-edges}
            \hspace{5em}
                e_{g,z} = x_v^{\pm 1}, \quad e_{p,g} = x_v^\alpha, \quad e_{p,z} = x_v^\beta, \quad \text{for } \alpha, \beta \in \ZZ_{f(v)} \setminus \{0\}.
            \end{equation*}
        Indeed, $e_{g,z} \in X_S$ by construction and $e_{p,z} \in S(X) \setminus X_S$, for otherwise we would have $d_{X_S}(o,g)=d_{X_S}(o,z)$. Hence, $f(v) = \infty$ and $\lvert \beta \rvert \geq 2$, and moreover, $\lvert \beta \rvert = \lvert \alpha \rvert +1$. Note that since $e_{h,z} \in X_S$, we know $p \neq h$.
        Next, applying the cycle condition to the vertices $o,p,h,z$ yields a convex even cycle $\CC$ of length $4$ (by Lemma \ref{lemma:facts-caydsx}\ref{lemma:intersection3even}) containing the edges $\{p,z\}$ and $\{h,z\}$, labelled respectively by $e_{p,z} = x_v^{\beta}$ and $e_{h,z}~=:~x_w^\eta \in X_S$, and
        such that the vertex opposite $z$, $q$, satisfies 
        \begin{equation*}
            q \in I_{S(X)}(o,p) \cap I_{S(X)}(o,h), \quad d_{S(X)}(o,q)=l-2.
        \end{equation*}
        It follows that the vertices $q,g,z,h$ satisfy the hypotheses of the cycle condition \eqref{eq:newgens-cyclecond} in $\caygood$:
            \begin{equation*}
                \begin{split}
                \hspace{5em}
                    d_{X_S}(q,g) & =1+\lvert \alpha \rvert = \lvert \beta \rvert, \text{ as } q\in I_{X_S}(o,p) \text{ and } p \in I_{X_S}(o,g), \\
                    d_{X_S}(q,h) & =\lvert \beta \rvert, \text{ as } q\in I_{X_S}(o,h), \\
                    d_{X_S}(q,z) & =1+\lvert \beta \rvert, \text{ as } q\in I_{X_S}(o,p) \text{ and } p \in I_{X_S}(o,z), \\
                \end{split}
            \end{equation*}
        so we are done as long as $q \neq o$. 
        The case $q=o$ 
        is solved by applying a similar reasoning and making use of the dihedral relation $[x_v,x_w]_2~=~[x_w,x_v]_2$. \\

\begin{figure}[!tbp]
\centering
\begin{subfigure}{0.45\textwidth}
    \resizebox{0.85\linewidth}{!}{\input{media/new-gens-cycle-ii}}
    \caption{Cycle condition in the case \ref{proof:cyclecond-ii}, where $d_{S(X)}(o,g)= l, d_{S(X)}(o,h) = l-1$.}
    \label{fig:new-gens-cycle-ii}
\end{subfigure}
\hfill
\begin{subfigure}{0.45\textwidth}
    \resizebox{0.85\linewidth}{!}{\input{media/new-gens-cycle-iii-pneqq}}
    \caption{Cycle condition in the case \ref{proof:cyclecond-iii}, where $d_{S(X)}(o,g) = d_{S(X)}(o,h) = l$.}
    \label{fig:new-gens-cycle-iii-pneqq}
\end{subfigure}

\caption{The induction step of the cycle condition.}
\label{fig:new-gens-cycle-ii-iii}
\end{figure}
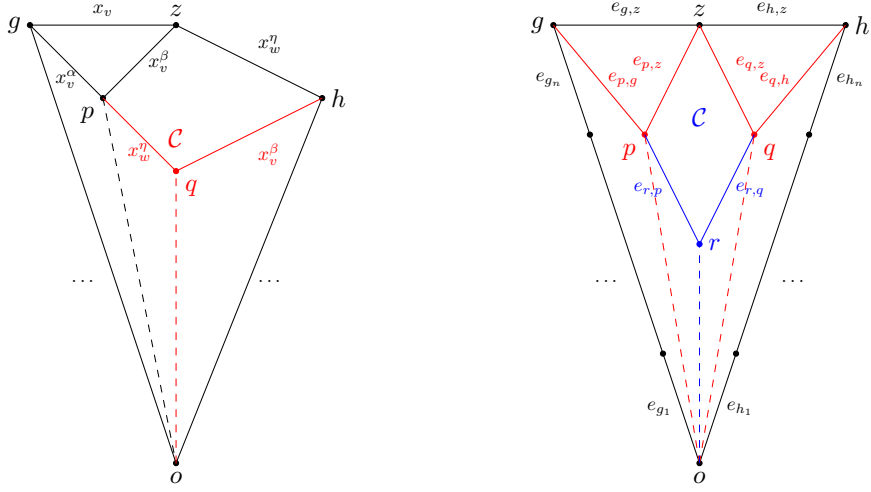



        \item \label{proof:cyclecond-iii} Suppose that $d_{S(X)}(o,g) = d_{S(X)}(o,h) = l$. Applying the triangle condition in $\caydsx$ to the triples $o,g,z$ and $o,h,z$ yields a common neighbour $p$ of $g$ and $z$, and $q$ of $h$ and $z$, respectively, such that
            \begin{equation*}
                \begin{split}
                \hspace{5em}
                    & p \in I_{S(X)}(o,p)\cap I_{S(X)}(o,z), \quad 
                    q \in I_{S(X)}(o,h)\cap I_{S(X)}(o,z), \\
                    & d_{S(X)}(o,p)= d_{S(X)}(o,q) = l-1.
                \end{split}
            \end{equation*}
        We distinguish two further subcases: $p=q$ and $p\neq q$. If $p=q$, then the argument is similar to that of the intersection of triangles property \eqref{proof:intersection-triangles}. If $p\neq q$, the cycle condition 
                applied to the vertices $o,p,q,z$ yields a convex even cycle $\CC$, of length $4$ by Lemma \ref{lemma:facts-caydsx}\ref{lemma:intersection3even}; see Figure \ref{fig:new-gens-cycle-iii-pneqq}. $\CC$ contains the edges $\{p,z\}$ and $\{q,z\}$, and the vertex opposite $z$, $r$, satisfies 
                \begin{equation*}
                    \hspace{-4em}
                    \begin{split}
                        & r \in I_{S(X)}(o,p)\cap I_{S(X)}(o,q), \\
                        & d_{X_S}(r,g) =d_{X_S}(r,h)=d_{X_S}(r,z)-1, \\
                        & d_{S(X)}(o,r) =l-2.
                    \end{split}
                \end{equation*}
                Hence, the induction hypothesis $cycl_{n'}(r,g,h,z)$, $n' < n$, is satisfied as long as $r\neq o$. 
                The case $r = o$ follows from an analogous reasoning, combined with the use of the dihedral relation 
                coming from $\CC$.\\
    \end{enumerate}
 
    
        \item \textbf{Intersection of Even Cycles:} Assume for contradiction that there exist two convex even cycles in $\caygood$ which intersect in at least two edges. Using the same techniques as in the verification of the previous three axioms, 
        we deduce that the two cycles must coincide in $\caygood$.

    \end{enumerate}
\end{proof}

\subsection{$(D,X_S)$ has the FFTP}
\label{subsec:xs-fftp}

The aim of this subsection is to detail the proof of our main theorem, which establishes the FFTP (see Definition \ref{def:fftp}) for $(D,X_S)$, with $X_S$ the generating set given by (see Definition \ref{def:alternative-gen-sets})
\begin{equation*}
     \{x_v^{\alpha} \mid v\in V \text{ s.t. } f(v) < \infty, \alpha \in \ZZ_{f(v)} \setminus \{0\}\} \cup \{x_v^{\pm 1} \mid v\in V \text{ s.t. } f(v) = \infty\}.
\end{equation*}

\begin{theorem}
\label{thm:dyer-fftp}
    Let $D=D(\GG, m, f)$ be a Dyer group, 
    then $(D,X_S)$ has the falsification by fellow-traveller property with constant $2M$, where 
    \begin{equation*}
        M = \max \{m(e) \mid e \in E(\GG)\}.
    \end{equation*}
\end{theorem}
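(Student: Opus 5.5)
Take a non-geodesic path $\gamma = (v_0, v_1, \ldots, v_n)$ in $\caygood$. By Proposition \ref{prop:cayd-mediangle}, $\caygood$ is mediangle, so Theorem \ref{thm:hyperplanegeod} applies: since $\gamma$ is not geodesic, it crosses some hyperplane at least twice. Among all pairs of edges of $\gamma$ lying in a common hyperplane, choose a pair $e_i=\{v_{i-1},v_i\}$, $e_j=\{v_{j-1},v_j\}$ with $i<j$ such that $j-i$ is minimal. Minimality ensures that the subpath $\gamma_{[i,j-1]}$ from $v_i$ to $v_{j-1}$ crosses every hyperplane at most once, so it is geodesic. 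Moreover, none of its hyperplanes is the hyperplane $H$ of $e_i$ and $e_j$. The goal is to replace the subpath $\gamma_{[i-1,j]}$ by a strictly shorter path with the same endpoints that stays uniformly close to it. The new path is obtained by sliding $\gamma_{[i,j-1]}$ across $H$ onto the other side, i.e.\ along the carrier of $H$, and deleting the two crossings $e_i$ and $e_j$.

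The key structural step is a local ``sliding'' lemma for mediangle graphs. Let $\sigma$ be a geodesic starting at a vertex adjacent to an edge $e$ of the hyperplane $H$, with $\sigma$ not crossing $H$ and ending at an endpoint of another edge of $H$. Then $\sigma$ can be transported edge by edge along $H$. Each step across an edge $f$ of $\sigma$ uses either a $3$-cycle containing $f$ and an $H$-edge, or a convex even cycle in which $f$ and the $H$-edge are adjacent. I would prove this by induction on the length of $\sigma$, using the cycle and triangle conditions of Definition \ref{def:mediangle}, applied with base point the far endpoint, in the spirit of the proof of Proposition \ref{prop:cayd-mediangle}. By Lemma \ref{lemma:facts-caydsx}\ref{lemma:convex-dihedral} and the subsequent Remark, every convex even cycle in $\caygood$ is dihedral, hence of length $2m(e)\le 2M$. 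Triangles come from vertex groups. For infinite-order vertices there are no triangles, but there are ``ladders'' given by $4$-cycles with commuting generators.

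In the transported path, each edge $f$ of the original geodesic is replaced by the opposite side of the convex even cycle spanned by $f$ and the current $H$-edge. That opposite side has length $m-1$ and travels with $f$ at distance at most $m\le M$. In a triangle, the replacement is a single edge. A naive substitution would lengthen the path, so the replacement should be organised as in Noskov's argument for Coxeter groups. The transported path is the ``parallel copy'' of $\gamma_{[i,j-1]}$ on the far side of $H$. Its length is $j-i-1$, one less than the $j-i+1$ edges it replaces, because the two crossings of $H$ disappear. Consecutive dihedral cycles along the way share the $H$-edge. Consequently, each vertex of the parallel copy lies within one dihedral cycle of the corresponding vertex of $\gamma$, i.e.\ within distance $M$. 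The extra shift needed to synchronise the time parametrisations contributes at most another $M$, which gives the constant $2M$.

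The main obstacle is this sliding step: showing that the parallel copy exists and has the same length. In particular, in $\caygood$ an edge labelled $x_w^{\pm1}$ with $f(w)=\infty$ must slide across a dihedral $4$-cycle, not across a syllable triangle. In $\caydsx$ one would instead use $x_w^k$-triangles, which are not present in $\caygood$. I would first try to handle this by using the fact that hyperplanes of $\caygood$ through infinite-order labels only meet via commuting squares (the compatibility assumption \eqref{eq:compatibility}). That reduces those steps to the right-angled case. After this, I would check that the synchronous fellow-travelling constant does not accumulate along the slide.
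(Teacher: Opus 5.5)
\textbf{Verdict: genuine gap.} The step that fails is your sliding estimate. You claim that $\sigma$ can be transported along the carrier of $\HH$ so that each vertex of the parallel copy lies within one dihedral cycle (distance at most $M$) of the corresponding vertex of $\gamma$. A geodesic joining two vertices adjacent to $\HH$ on the same side need not stay on the carrier, nor uniformly near it. No reorganisation as in Noskov's argument can give a bound in terms of $M$ alone, because the statement with constant $2M$ is false.

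Take $W(A_n)\cong S_{n+1}$ with standard generators $a_1,\dots,a_n$. This is a Dyer group with $f\equiv 2$, $X_S=X$ and $M=3$. Write elements as arrangements of $1,\dots,n+1$, so that right multiplication by $a_i$ swaps positions $i,i+1$ and the distance counts pairs of values in different relative order.
\begin{itemize}
\item The path $\gamma$ from $e$ spelled $(a_1a_2\cdots a_n)^2$ has length $2n$ and ends at $[3,\dots,n+1,1,2]$, which has length $2n-2$.
\item Its first and last edges both swap the values $1,2$, while every other edge swaps a different pair. So $\gamma$ is minimal, with $\sigma$ spelled $a_2\cdots a_na_1\cdots a_{n-1}$.
\item By parity, every shorter path $\alpha$ with the same endpoints is geodesic. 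It therefore never swaps $1$ and $2$, so $1$ precedes $2$ in $\alpha(n)$.
\item Since $\gamma(n)=[2,3,\dots,n+1,1]$, each value $x\ge 3$ is discordant with $1$ or with $2$, and the pair $\{1,2\}$ is discordant too. Hence $d(\gamma(n),\alpha(n))\ge n$, which exceeds $2M=6$ once $n\ge 7$.
\end{itemize}
In your terms, $\sigma$ passes through $\gamma(n)$, which is at distance $n-1$ from the carrier of $\HH$, and its mirror image across $\HH$ is at distance $2n-1$. So $\sigma$ is not a ladder, and it cannot be replaced by one at a cost bounded by $M$.

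The paper follows the same outline as you. Its minimal paths are your closest pair of $\HH$-edges. Lemmas \ref{lemma:hyperplanes-triangles}--\ref{lemma:hyperplanes-convexcycles} then push the $\HH$-edge back one step at a time using the triangle and cycle conditions based at $v_0$, which is a local form of your slide. The paper too asserts that the $M$- or $2M$-fellow-travelling survives the induction and the final concatenation without tracking the accumulated drift, and the example shows this cannot be done.

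What an argument of this type actually needs is a uniform bound $N$ on the distance to the carrier of $\HH$ of any geodesic joining two vertices adjacent to $\HH$ on the same side. This is a parallel-wall-type statement; for Coxeter groups it is the Brink--Howlett parallel wall theorem, after which the reflected copy of $\sigma$ works with a constant of order $N$. Such an $N$ depends on $D$, not only on $M$.

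There are also two smaller errors.
\begin{itemize}
\item A $3$-cycle containing an edge of $\HH$ lies entirely in $\HH$, so triangles never transport edges of $\sigma$.
\item Triangles matter only when $v_{i-1}$ and $v_j$ lie in different sectors of $\HH$, for example the path $e,x_v,x_v^2$ with $f(v)=3$. In that case the replacement must keep one edge of $\HH$ and has length $j-i$, not $j-i-1$. This is the paper's case $d(v_0,v_n)=n-1$.
\end{itemize}
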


We emphasise that $M < \infty$, since by definition, $m(e) < \infty$ $\forall e\in E(\GG)$ and the Dyer graph $\GG$ contains only finitely many edges.
The key to showing the FFTP for $(D,X_S)$ is understanding the behaviour of paths in its Cayley graph; 
namely, how they can be shortened. More precisely, we shall make use of the following three operations on paths in $\caygood$ defined in \cite{genevois} and illustrated in Figure \ref{fig:pathstransf}.
\begin{definition} 
\label{def:transformations}
Let $\gamma = \{v_0, v_1, \ldots, v_n\}$ be a path in $\caygood$. 
\begin{enumerate}[label={(\textbf{T\arabic*})}, 
                  ref={\textbf{T\arabic*}}] 
    \item \label{transf:backtrack} \textbf{Removing a backtrack:} If there exists $i \in \{0, \ldots, n-2\}$ such that $v_i = v_{i+2}$, replace $\gamma$ by the path $\{v_0, v_1, \ldots, v_i, v_{i+3}, \ldots, v_n\}$.

    \item \label{transf:triangle} \textbf{Shortening a triangle:} If there exists $i \in \{0, \ldots, n-2\}$ such that $v_i$ and $v_{i+2}$ are adjacent, replace $\gamma$ by the path $\{v_0, v_1, \ldots, v_i, v_{i+2}, \ldots, v_n\}$.

    \item \label{transf:flip} \textbf{Applying a flip:} If there exists a dihedral cycle $\CC$ such that $\CC \cap \gamma$ contains a subsegment $\tilde{\CC}$ of $\CC$ of length $\ell(\CC)/2$, then replace the subpath $\tilde{\CC}$ of $\gamma$ by $\CC \setminus \tilde{\CC}$.
\end{enumerate}
\end{definition}

\begin{figure}[ht]
\centering
\begin{subfigure}{0.27\textwidth}
    \resizebox{1.25\linewidth}{!}{\input{media/path-backtrack}}
    \caption{\ref{transf:backtrack}}
    \label{fig:path-backtrack}
\end{subfigure}
\hfill
\begin{subfigure}{0.27\textwidth}
    \resizebox{1.25\linewidth}{!}{\input{media/path-triangle}}
    \caption{\ref{transf:triangle}}
    \label{fig:path-triangle}
\end{subfigure}
\hfill
\begin{subfigure}{0.36\textwidth}
\centering
    \resizebox{1.3\linewidth}{!}{\input{media/path-flip}}
    \caption{\ref{transf:flip}}
    \label{fig:path-cycleflip}
\end{subfigure}
        
\caption{The three path transformations of Definition \ref{def:transformations}.}
\label{fig:pathstransf}
\end{figure}
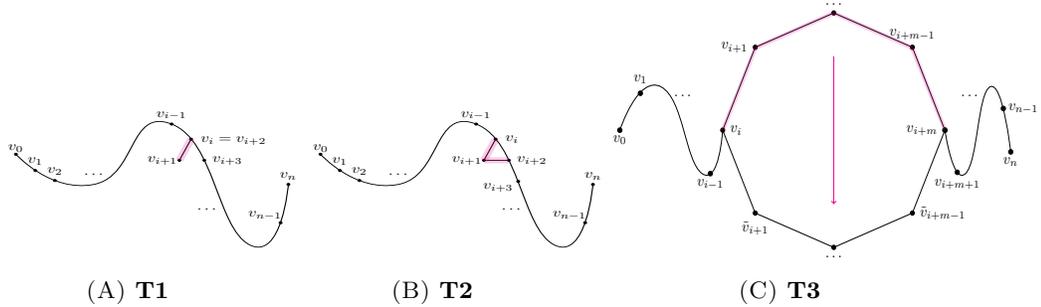

\begin{remark}
    For simplicity, we will assume that the intersection of any dihedral cycle $\CC$ and path $\gamma$ satisfies
        \begin{equation}
        \label{eq:path-on-cycle}
            \ell(\gamma \cap \CC) \leq \ell(\CC)/2.
        \end{equation}
    Indeed, if instead $\ell(\gamma \cap \CC) > \ell(\CC)/2$, then applying a flip around $\CC$ produces some backtracking, which can be removed using transformation \ref{transf:backtrack}. This yields a shorter path $\gamma'$ for which $\ell(\gamma' \cap \CC) \leq \ell(\CC)/2$.
\end{remark}


We begin by analysing how these transformations affect a non-geodesic path. 

\begin{lemma}
\label{lemma:transf}
Let $\gamma$ be a non-geodesic path in $\caygood$.
\begin{enumerate}[label=(\roman*)]
    \item \label{lemma:transf-backtrack} Suppose that $\Tilde{\gamma}$ is the path obtained from $\gamma$ by removing a backtrack \ref{transf:backtrack}, then the paths $\gamma$ and $\Tilde{\gamma}$ $2$-fellow travel and $\ell(\Tilde{\gamma}) < \ell(\gamma)$.
    
    \item \label{lemma:transf-triangle} Suppose that $\Tilde{\gamma}$ is the path obtained from $\gamma$ by shortening a triangle \ref{transf:triangle}, then the paths $\gamma$ and $\Tilde{\gamma}$ $1$-fellow travel and $\ell(\Tilde{\gamma}) < \ell(\gamma)$.
    
    \item \label{lemma:transf-flip} Suppose that $\Tilde{\gamma}$ is the path obtained from $\gamma$ by applying a flip \ref{transf:flip}, then the paths $\gamma$ and $\Tilde{\gamma}$ $M$-fellow travel and $\ell(\Tilde{\gamma}) = \ell(\gamma)$. 
\end{enumerate}
\end{lemma}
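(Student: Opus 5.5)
Each of the three statements concerns a single local modification of $\gamma$, so I would prove them separately. In each case I would parametrise $\gamma=\{v_0,\ldots,v_n\}$ and $\tilde\gamma$ as functions $\NN\to D$ as in the definition of fellow travelling, and compute $d_{X_S}\big(\gamma(t),\tilde\gamma(t)\big)$ directly for every $t$. The lengths are immediate from the definitions: \ref{transf:backtrack} removes two edges, \ref{transf:triangle} replaces two edges by one, and \ref{transf:flip} replaces a subsegment of length $\ell(\CC)/2$ by the complementary subsegment of the same length. Non-geodesicity of $\gamma$ plays no role beyond guaranteeing that the operations are meaningful.

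For \ref{lemma:transf-backtrack}, suppose $v_i=v_{i+2}$. For $t\le i$ the two paths agree. For $t\ge i+1$ we have $\tilde\gamma(t)=\gamma(t+2)$ (reading the removed piece as deleting $v_{i+1},v_{i+2}$), and consecutive vertices are at distance $1$, so $d(\gamma(t),\gamma(t+2))\le 2$. Once $t$ passes the end of either path both are eventually constant at $v_n$, and the bound persists.

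For \ref{lemma:transf-triangle}, the same shift argument applies, but with a shift by one: $\tilde\gamma(t)=\gamma(t+1)$ for $t\ge i+1$. This gives distance at most $1$.

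For \ref{lemma:transf-flip}, let the flipped subsegment run from $v_j$ to $v_{j+k}$ with $k=\ell(\CC)/2$. Outside $[j,j+k]$ the paths agree, since the length is preserved and the endpoints of $\tilde\CC$ are fixed. For $t\in[j,j+k]$, both $\gamma(t)$ and $\tilde\gamma(t)$ lie on $\CC$, at distance $t-j$ from $v_j$ along the two arcs. Hence their distance along $\CC$ is at most $\min\{2(t-j),\,2(j+k-t)\}\le k=\ell(\CC)/2$.

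It remains to bound $\ell(\CC)/2$ by $M$. Here I would use Lemma \ref{lemma:facts-caydsx}\ref{lemma:convex-dihedral} and the subsequent Remark, which say that dihedral cycles in $\caygood$ arise from the dihedral relations $[x_u,x_v]_{m(e)}=[x_v,x_u]_{m(e)}$. Thus $\ell(\CC)=2m(e)$, giving $\ell(\CC)/2=m(e)\le M$.

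The main obstacle is the case $f(u)=\infty$ (forcing $m(e)=2$ by the compatibility assumption \eqref{eq:compatibility}). In $\caydsx$ such a relation can involve syllables $x_u^\alpha$ with $|\alpha|\ge2$, and these are not edges of $\caygood$. I would check that, in $\caygood$, every dihedral cycle still has one of two forms. Either it is the $4$-cycle with edges from $X_S$, for instance $x_v^\alpha x_u = x_u x_v^\alpha$; or, where the compatibility assumption gives $f=2$ on both vertices, it uses the standard generators and has length $2m(e)$. In either case $\ell(\CC)\le 2M$. This yields the $M$-fellow travelling claimed in \ref{lemma:transf-flip}.
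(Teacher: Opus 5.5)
Your proposal is correct and follows essentially the same route as the paper: for backtracks and triangles you compare $\gamma(t)$ with the shifted vertex $\gamma(t+2)$, resp.\ $\gamma(t+1)$, and for a flip you go around $\CC$ in the shorter direction, which gives the bound $\ell(\CC)/2=m(e)\le M$. Your extra check that every dihedral cycle of $\caygood$ still has length $2m(e)$ (using the compatibility assumption when some $f(u)=\infty$) makes explicit a point that the paper only asserts when it chooses $M=\max_e m(e)$ to cover all dihedral cycles.
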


\begin{proof}
Let $\gamma = \{v_0, v_1, \ldots, v_n\}$ be a non-geodesic path in $\caygood$.
\begin{enumerate}[label=(\roman*)]
    \item \label{proof:backtrack-fftp} It is clear from the definition of removing a backtrack 
    that $\ell(\Tilde{\gamma}) = \ell(\gamma) - 2$. Suppose $\tilde{\gamma} = \{v_0, v_1, \ldots, v_i, v_{i+3}, \ldots, v_n\}$, we compute the fellow-travelling constant as follows. The two paths coincide for the first $i$ steps, then, for $t \in \{i+1, \ldots, n-2\}$, $\Tilde{\gamma}$ follows the same vertices as $\gamma$ but stays two steps ahead until it reaches the last vertex $v_n$, i.e. $\Tilde{\gamma}(t) = \gamma(t+2)$. Finally, for the last two steps, the path $\gamma$ catches up to $\tilde{\gamma}$. This translates to 
    \begin{equation*}
        d(\tilde{\gamma}(t), \gamma(t)) \leq 
        \begin{cases}
           0, \text{ if } t \in \{0, \ldots, i\}, \\
           2, \text{ if } t \in \{i+1, \ldots, n-2\}, \\
           1, \text{ if } t =n-1, \\
           0, \text{ if } t =n.
        \end{cases}
    \end{equation*}
    We conclude that $\gamma$ and $\tilde{\gamma}$ $2$-fellow travel.\\

    \item It is clear from the definition of shortening a triangle that $\ell(\Tilde{\gamma}) = \ell(\gamma) - 1$. Suppose $\tilde{\gamma} = \{v_0, v_1, \ldots, v_i, v_{i+2}, \ldots, v_n\}$, we compute their distance in a similar way as in \ref{proof:backtrack-fftp} above:
        \begin{equation*}
        d(\tilde{\gamma}(t), \gamma(t)) \leq 
        \begin{cases}
            0, \text{ if } t \in \{0, \ldots, i\}, \\
            1, \text{ if } t \in \{i+1, \ldots, n-1\}, \\
            0, \text{ if } t =n.
        \end{cases}
        \end{equation*}
        We conclude that $\gamma$ and $\tilde{\gamma}$ $1$-fellow travel. \\

    \item Let $\CC$ be a dihedral cycle of length $2m$ such that $\CC \cap \gamma = \{v_i, v_{i+1}, \ldots, v_{i+m-1}, v_{i+m}\}$. By definition of a flip around $\CC$, we replace the path $\gamma$ by 
        \begin{equation*}
            \Tilde{\gamma} = \{v_0, v_1, \ldots, v_i, \tilde{v}_{i+1}, \ldots, \tilde{v}_{i+m-1}, v_{i+m}, \ldots, v_n\}
        \end{equation*}
    such that 
        \begin{equation*}
            \CC \cap \Tilde{\gamma} = \{v_i, \tilde{v}_{i+1}, \ldots, \tilde{v}_{i+m-1}, v_{i+m}\},
        \end{equation*}
as in Figure \ref{fig:path-cycleflip}. It is clear that this transformation is length-preserving. The distance between the two paths is computed as follows.
They coincide for the first $i$ steps, so $d(\tilde{\gamma}(t), \gamma(t)) = 0$ $\forall t \in \{0, \ldots, i\}$. Then, we must analyse $d(\tilde{\gamma}(t), \gamma(t)) = d(\tilde{v}_t, v_t)$ for $t \in \{i+1,i+m-1\}$. We know there exists a path from the vertex $\tilde{v}_t$ to the vertex $v_t$ that goes around $\CC$; either towards the left (joining $\tilde{v}_t$ to $v_t$ via $v_i$) or the right (via $v_{i+m}$), depending if $t-i \leq \lfloor \frac{m}{2} \rfloor$ or $t-i > \lfloor \frac{m}{2} \rfloor $. In both cases, it holds that $d(\tilde{\gamma}(t), \gamma(t)) \leq m$. Finally, the paths coincide again for $t \in \{i+m, \ldots, n\}$, so $d(\tilde{\gamma}(t), \gamma(t)) = 0$. To conclude, we take $M:= \max\{m(e) \mid e \in E(\GG)\}$ so as to cover all 
dihedral cycles.
\end{enumerate}
\end{proof}

Let $\gamma = \{v_0, v_1, \ldots, v_n\}, n \geq 2$, be a non-geodesic path in $\caygood$. By Theorem \ref{thm:hyperplanegeod}, there is a hyperplane that $\gamma$ crosses at least twice, so it suffices to focus on a subpath of $\gamma$ delimited by two edges belonging to this hyperplane. In what follows, we will consider the non-geodesic path $\gamma$ to be \textit{minimal}, in the following sense. 
\begin{definition}
    A non-geodesic path $\gamma = \{v_0, v_1, \ldots, v_n\}$ is said to be \textit{minimal} if it crosses exactly one hyperplane twice, in its first and last edges, $\{v_0,v_1\}$ and $\{v_{n-1},v_n\}$.
\end{definition}
More precisely, the subpaths $\eval{\gamma}_{[v_0, v_{n-1}]}$ and $\eval{\gamma}_{[v_1, v_{n}]}$ are geodesic. 
As per the Definition \ref{def:hyperplane} of a hyperplane, each of the edges $\{v_0,v_1\}$ and $\{v_{n-1},v_n\}$ is either an edge of some $3$-cycle or some convex even cycle; the next three lemmas treat the three cases that arise up to symmetry. To simplify the notation, we will denote the reoccurring hyperplane by $\HH$.

\begin{lemma}
\label{lemma:hyperplanes-triangles}
    Let $\gamma = \{v_0, v_1, \ldots, v_n\}$ be as described above. If $\{v_0,v_1\}$ and $\{v_{n-1},v_n\}$ are both edges of $3$-cycles, then there exists a path $\alpha$ such that $\gamma$ and $\alpha$ $M$-fellow travel 
    and $\ell(\alpha) < \ell(\gamma)$.
\end{lemma}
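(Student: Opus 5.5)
We treat the case where both the first edge $\{v_0,v_1\}$ and the last edge $\{v_{n-1},v_n\}$ of the minimal non-geodesic path $\gamma$ lie in $3$-cycles and belong to the same hyperplane $\HH$. By Lemma \ref{lemma:facts-caydsx}\ref{lemma:labels-3-cycle} and the subsequent Remark, every $3$-cycle of $\caygood$ has all its edges labelled by a single vertex group. A $3$-cycle labelled by an infinite-order $x_v$ would give a relation $x_v^{\pm1}x_v^{\pm1}x_v^{\pm1}=1$, which is impossible. So both edges are labelled by syllables $x_v^{\alpha}$, $x_w^{\beta}$ with $f(v),f(w)<\infty$, and their cosets $v_0\langle x_v\rangle$ and $v_{n-1}\langle x_w\rangle$ are finite cliques of $\caygood$.

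\textbf{Step 1: the hyperplane $\HH$ is carried by a single generator up to conjugation.} We follow the equivalence chain from $\{v_0,v_1\}$ to $\{v_{n-1},v_n\}$. Inside a clique, the label stays in the vertex group. Across a convex even (dihedral) cycle, opposite edges are labelled by conjugate generators. In a square, which is the only kind of dihedral cycle that can contain an edge with finite $f$ but $m\neq 2$, opposite edges even carry the same generator up to the relevant power. When $m>2$ we have $f=2$, so the syllable is the generator itself. This gives the standard hyperplane/carrier description, via \cite{genevois}: $\HH$ determines a coset clique structure.

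\textbf{Step 2: produce the shorter path.} The plan is to avoid the global carrier and use the transformations instead. We first apply flips \ref{transf:flip} and induction to move the crossing of $\HH$ along $\gamma$. Concretely, $v_1$ and $v_{n-1}$ lie on the same side of $\HH$, while $v_0$ and $v_n$ lie in the same sector. Since $\eval{\gamma}_{[v_1,v_n]}$ is geodesic and its last edge crosses $\HH$, we use the cycle and triangle conditions (now available for $\caygood$ by Proposition \ref{prop:cayd-mediangle}) to find a geodesic from $v_1$ to $v_n$ that crosses $\HH$ in its first edge. This is where the $3$-cycle hypothesis enters: both crossing edges then lie in cliques of $\HH$ adjacent to $v_1$.

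The argument works by induction on $n$. We apply the triangle or cycle condition with basepoint $v_n$ to the pair $v_{n-2},v_{n-1}$ and push the last edge backwards. This replaces the subsegment by a dihedral half-cycle (a flip, which $M$-fellow travels by Lemma \ref{lemma:transf}\ref{lemma:transf-flip}) or by a triangle shortcut. In the end the two $\HH$-edges become consecutive, $\{v_0,v_1\}$ and $\{v_1,v_2'\}$, both in the same clique. Then either $v_0=v_2'$, which is a backtrack \ref{transf:backtrack}, or $v_0\sim v_2'$, which is a triangle \ref{transf:triangle}. Either way Lemma \ref{lemma:transf} gives a strictly shorter path.

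\textbf{Main obstacle: the fellow-travelling constant.} A concatenation of many flips does not a priori $M$-fellow travel the original path, since the errors could accumulate. To control this, I would realise all the flips at once. The pushed edge sweeps across a ladder of squares and dihedral cycles along $\gamma$, and the new path $\alpha$ runs parallel to $\gamma$ on the other side of the carrier of $\HH$. At each time $t$, the vertices $\gamma(t)$ and $\alpha(t)$ are then joined either within one dihedral cycle, at distance at most $M$, or by one $\HH$-edge, at distance $1$. Checking this synchronisation, in particular that the index shift of one caused by the final triangle does not break the bound, is the delicate point. If it fails, one falls back on the weaker constant $2M$ allowed by Theorem \ref{thm:dyer-fftp}.
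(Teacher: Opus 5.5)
Your strategy of pushing the last $\HH$-edge back along $\gamma$ until it meets $\{v_0,v_1\}$, then removing a backtrack or shortening a triangle, is the geometric content of the paper's proof. However, there is a genuine gap exactly where the lemma has content. Producing \emph{some} shorter path is trivial; what must be shown is that it $M$-fellow travels $\gamma$. You leave this as ``the delicate point''. Your fallback to $2M$ does not prove the statement, which asserts $M$, and it is not justified either: your own remark that successive flips accumulate error applies to $2M$ just as well.

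The pushing step is also set up wrongly. With basepoint $v_n$, neither the triangle nor the cycle condition applies, since $d(v_n,v_{n-1})=1$ and $d(v_n,v_{n-2})=2$. The basepoint must be $v_0$. Minimality gives $d(v_0,v_k)=k$ for $k\le n-1$ and $d(v_0,v_n)\in\{n-2,n-1\}$. In the first case, apply the cycle condition to $v_0,v_{n-2},v_n,v_{n-1}$. In the second, first apply the triangle condition to $v_0,v_{n-1},v_n$ (giving $w'$), then the cycle condition to $v_0,v_{n-2},w',v_{n-1}$. Moreover, the cycles obtained are not general dihedral cycles. Every edge of $\HH$ carries a syllable of one $x_v$ with $3\le f(v)<\infty$, so each cycle contains an $\HH$-edge lying in a $3$-cycle. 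By Lemma \ref{lemma:facts-caydsx}\ref{lemma:intersection3even}, each is therefore a $4$-cycle. (Also, $v_0$ and $v_n$ need not lie in the same sector of $\HH$, though your final backtrack/triangle dichotomy covers this.)

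The paper avoids accumulation through the shape of its induction rather than by composing flips. Let $w$ be as above, with $\{v_{n-2},w\}\in\HH$ and $d(v_0,w)=n-3$. The paper applies the induction hypothesis to $\hat\gamma=\gamma|_{[v_0,v_{n-2}]}\cup\{v_{n-2},w\}$, which coincides with $\gamma$ up to time $n-2$. So $\hat\alpha$ already $M$-fellow travels $\gamma$ there. Appending $\{w,v_n\}$, resp.\ $\{w,w'\},\{w',v_n\}$, only affects the last two times, where distances are at most $2$. If you take this route, carry the stronger inductive claim that $\alpha$ is geodesic, which the construction provides. Otherwise, when $d(v_0,v_n)=n-1$, appending two edges to $\hat\alpha$ only gives $\ell(\alpha)\le n$.

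Alternatively, you can finish your own picture directly. Set $w_{n-1}:=v_n$ (resp.\ $w'$). Iterating the cycle condition at $v_0$ gives vertices $w_{n-2},\dots,w_1$ such that $v_kv_{k+1}w_{k+1}w_k$ is a square, $\{v_k,w_k\}\in\HH$, and $d(v_0,w_k)=k-1$; in particular $w_1=v_0$. Then $\alpha=(w_1,\dots,w_{n-1})$, resp.\ $(w_1,\dots,w_{n-1},v_n)$, is strictly shorter than $\gamma$. For $t\le n-2$, the vertex $\alpha(t)=w_{t+1}$ is joined to $\gamma(t+1)$ by a single $\HH$-edge, so $d(\gamma(t),\alpha(t))\le 2\le M$. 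No flip has to be controlled separately.
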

\begin{figure}[ht]
\label{fig:tri-tri}
\centering
\begin{subfigure}{0.5\textwidth}
    \resizebox{1.5\linewidth}{!}{\input{media/tri-tri-case1}}
    \caption{Case $d(v_0,v_n)=n-2$ of Lemma \ref{lemma:hyperplanes-triangles}.}
    \label{fig:tri-tri-case1}
\end{subfigure}
\hfill
\begin{subfigure}{0.5\textwidth}
\centering
    \resizebox{1.5\linewidth}{!}{\input{media/tri-tri-case2}}
    \caption{Case $d(v_0,v_n)=n-1$ of Lemma \ref{lemma:hyperplanes-triangles}.}
    \label{fig:tri-tri-case2}
\end{subfigure}
\caption{The induction step of Lemma \ref{lemma:hyperplanes-triangles}. We mark by {\color{red} $\star$} the edges which belong to the hyperplane $\HH$.}
\end{figure}
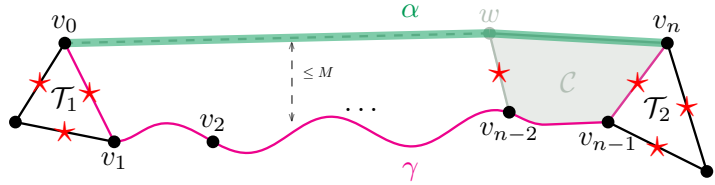
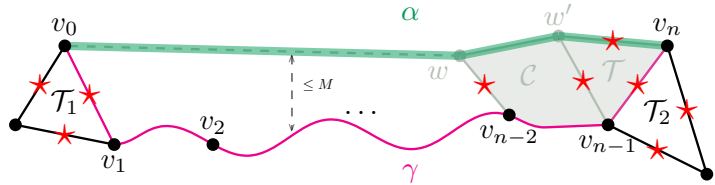
\begin{proof}
    Denote by $\TTT_1$ the $3$-cycle containing the edge $\{v_0,v_1\}$, and $\TTT_2$ the one containing $\{v_{n-1},v_n\}$. If $\TTT_1$ and $\TTT_2$ intersect in at least an edge, then there exists a $3$-cycle of which two of the edges are $\{v_0,v_1\}$ and $\{v_{n-1},v_n\}$. It follows that this triangle can be shortened; this was treated in Lemma \ref{lemma:transf}\ref{lemma:transf-triangle}. \\
    If not, we proceed by induction on $n\geq 2$. 
    For the base case $n=2$, $\TTT_1 \cap \TTT_2 = \{v_1\}$ and $d(v_0,v_2)=1$, so it follows that the vertices $v_0,v_1,v_2$ span a $3$-cycle which can be shortened. 
    Now, suppose that the result holds for any minimal non-geodesic path of length up to $n-1$. Consider such a path {\color{magenta} $\gamma = \{v_0, v_1, \ldots, v_n\}$} of length $n$, it follows from the minimality assumption that 
    \begin{equation*}
        d(v_0,v_n) \in \{n-2, n-1\}; 
    \end{equation*}
    we refer to Figures \ref{fig:tri-tri-case1} and \ref{fig:tri-tri-case2}, respectively. In each case, applying the cycle condition (resp. the triangle and cycle conditions) yields a $4$-cycle (by Lemma~\ref{lemma:facts-caydsx}\ref{lemma:intersection3even}) and a vertex $w$ such that $\{v_{n-2},w\}~\in~\HH$. The induction hypothesis applied to the subpath $\eval{\gamma}_{[v_0,v_{n-2}]}~\cup~\{v_{n-2},w\}$ yields a shorter path $\hat{\alpha}$ which $M$-fellow travels it. Finally, the path {\color{ForestGreen} $\alpha = \hat{\alpha} \cup \{w,v_n\}$} (resp. {\color{ForestGreen} $\alpha = \hat{\alpha} \cup \{w,w'\} \cup \{w',v_n\}$}) satisfies the required properties with respect to $\gamma$. 
\end{proof}

\begin{lemma}
\label{lemma:hyperplanes-triangle-convexcycle}
    Let $\gamma = \{v_0, v_1, \ldots, v_n\}$ be as described above. If $\{v_0,v_1\}$ and $\{v_{n-1},v_n\}$ are edges of a convex even cycle and a $3$-cycle respectively, then there exists a path $\alpha$ such that $\gamma$ and $\alpha$ $M$-fellow travel and $\ell(\alpha) < \ell(\gamma).$
\end{lemma}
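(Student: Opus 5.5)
We mimic the proof of Lemma \ref{lemma:hyperplanes-triangles}: first dispose of the degenerate configurations, then induct on $n\geq 2$, at each step pushing the hyperplane $\HH$ one step closer to $v_0$ along the end of $\gamma$. Throughout, let $\CC_1$ be the convex even (hence dihedral, by Lemma \ref{lemma:facts-caydsx}\ref{lemma:convex-dihedral} and the remark following it) cycle containing $\{v_0,v_1\}$, and $\TTT_2$ the $3$-cycle containing $\{v_{n-1},v_n\}$.

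\textbf{Degenerate cases.} If $\CC_1\cap\TTT_2$ contains an edge, Lemma \ref{lemma:facts-caydsx}\ref{lemma:intersection3even} forces $\ell(\CC_1)=4$, and $\CC_1$ and $\TTT_2$ share an edge. Then $\{v_0,v_1\}$ and $\{v_{n-1},v_n\}$ lie in a configuration of bounded size, namely a $4$-cycle glued to a triangle. In this configuration either $\gamma$ is short, or it runs along $\CC_1$ in a way that a flip (Lemma \ref{lemma:transf}\ref{lemma:transf-flip}) followed by a backtrack removal or a triangle shortening produces a shorter path. The intermediate paths $M$-fellow travel $\gamma$. For the base case $n=2$, $\gamma$ consists of two edges of $\HH$ sharing $v_1$. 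Minimality gives $d(v_0,v_2)\le 1$, so either $v_0=v_2$ (backtrack) or $v_0,v_1,v_2$ span a triangle, which we shorten by Lemma \ref{lemma:transf}.

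\textbf{Inductive step.} Since $\gamma|_{[v_0,v_{n-1}]}$ and $\gamma|_{[v_1,v_n]}$ are geodesic, we have $d(v_0,v_n)\in\{n-2,n-1\}$. Take $o:=v_0$ as basepoint and look at the triangle $\TTT_2=\{v_{n-1},v_n,u\}$.

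In the case $d(v_0,v_n)=n-1=d(v_0,v_{n-1})$, the triangle condition gives a common neighbour $u$ of $v_{n-1}$ and $v_n$ with $u\in I(v_0,v_{n-1})\cap I(v_0,v_n)$, so $d(v_0,u)=n-2=d(v_0,v_{n-2})$. If $u=v_{n-2}$, shorten the triangle. Otherwise apply the cycle condition to $v_0,v_{n-2},u,v_{n-1}$. This yields a convex even cycle through $\{v_{n-2},v_{n-1}\}$ and $\{u,v_{n-1}\}$. It meets a triangle in an edge, so by Lemma \ref{lemma:facts-caydsx}\ref{lemma:intersection3even} it is a $4$-cycle $v_{n-2},v_{n-1},u,w$. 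Its edge $\{v_{n-2},w\}$ is opposite $\{v_{n-1},u\}$, which lies in $\HH$ via $\TTT_2$, so $\{v_{n-2},w\}\in\HH$. The path $\gamma|_{[v_0,v_{n-2}]}\cup\{v_{n-2},w\}$ has length $n-1$, starts in $\HH$ via $\CC_1$, and ends in $\HH$. Truncating if needed, it is minimal, and its last edge lies in a triangle: the square $v_{n-2}v_{n-1}uw$ together with the triangle over $\{v_{n-1},u\}$ produces, by the hyperplane structure, a triangle on $\{v_{n-2},w\}$. This is the point to check carefully; I would verify it using the vertex-group labelling of Lemma \ref{lemma:facts-caydsx}\ref{lemma:labels-3-cycle} and the commuting relation given by the $4$-cycle. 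Induction then gives $\hat\alpha$, shorter and $M$-fellow travelling, and we set $\alpha=\hat\alpha\cup\{w,u\}\cup\{u,v_n\}$ with the lengths adjusted as in Lemma \ref{lemma:hyperplanes-triangles}.

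The case $d(v_0,v_n)=n-2$ is analogous but uses only the cycle condition.

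The fellow-travel constant does not accumulate. The new path differs from $\gamma$ only near the end, within a $4$-cycle, and by induction near the start. The worst deviation comes from the flip-type modification at $\CC_1$, which is at most $\ell(\CC_1)/2\le M$. Note that the base of the induction may terminate in a configuration where $\CC_1$ and the transported triangle share a vertex, which is handled by the degenerate case.

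\textbf{Main obstacle.} The key step is showing that transporting the triangle edge across the $4$-cycle again yields a triangle edge, and bookkeeping the synchronous distance so that it stays bounded by $M$ rather than growing with $n$. The latter I would handle by observing that $\alpha$ and $\gamma$ differ by a bounded-width strip of squares, so corresponding vertices remain at distance at most $2$ except near $\CC_1$.
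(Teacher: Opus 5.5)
\textbf{Gap: length bookkeeping in the case $d(v_0,v_n)=n-1$.} Your strategy is the paper's: transport the $\HH$-edge at the end of $\gamma$ one step back via the triangle and cycle conditions, then induct. The step you flag does work. The $4$-cycle $v_{n-2}v_{n-1}uw$ is dihedral with $m=2$, so $\{v_{n-2},w\}$ carries the same label $x_c^{\pm\beta}$ as $\{v_{n-1},u\}$, where $3\le f(c)<\infty$ because $\{v_{n-1},u\}$ lies in a $3$-cycle. Hence $\{v_{n-2},w\}$ lies in the clique $v_{n-2}\langle x_c\rangle$, which has size $f(c)\ge 3$.

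The problem is the lengths. There $d(v_0,w)=n-3$, while $\hat\gamma=\gamma|_{[v_0,v_{n-2}]}\cup\{v_{n-2},w\}$ has length $n-1$. Your induction hypothesis (``strictly shorter'') only yields $\ell(\hat\alpha)\le n-2$, so $\ell(\alpha)=\ell(\hat\alpha)+2\le n=\ell(\gamma)$, which is not a strict decrease. Deferring to ``lengths adjusted as in Lemma \ref{lemma:hyperplanes-triangles}'' does not help: that proof uses the identical construction $\hat\alpha\cup\{w,w'\}\cup\{w',v_n\}$ under the same hypothesis.

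The repair is to prove the stronger claim that $\alpha$ can be taken geodesic, i.e.\ $\ell(\alpha)=d(v_0,v_n)$. This holds for $n=2$ (the trivial path, or the single edge $\{v_0,v_2\}$). It also propagates: $\ell(\hat\alpha)=n-3$ gives $\ell(\alpha)=n-1$ in this case, and $\ell(\hat\alpha\cup\{w,v_n\})=n-2$ in the case $d(v_0,v_n)=n-2$.

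\textbf{Fellow-travelling estimate.} The same strengthening is what makes this estimate work. Your stated reason (a flip at $\CC_1$ being the worst deviation, a ``strip of squares'') does not match your own argument: with base case $n=2$, no flip around $\CC_1$ ever occurs. The correct computation is as follows. Once $\ell(\hat\alpha)=n-3$, we have $\alpha(t)=\hat\alpha(t)$ and $\gamma(t)=\hat\gamma(t)$ for $t\le n-3$, so the inductive bound is inherited. At $t=n-2,n-1,n$ the distances are at most $d(u,v_{n-2})\le 2$ (resp.\ $d(v_n,v_{n-2})\le 2$), then $1$, then $0$. So the constant is in fact $2\le M$.

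\textbf{Comparison with the paper.} Your induction never uses $\CC_1$. The step only needs the last edge to lie in a $3$-cycle, and for $n=2$ one always has $d(v_0,v_2)\le 1$. So your ``degenerate cases'' paragraph is superfluous; its closing remark also confuses sharing a vertex with sharing an edge. Your route therefore genuinely differs from the paper's, whose base case $n=p_1+1$ runs along half of $\CC_1$ and is where $m_1\le M$ enters. Once repaired, your argument proves Lemmas \ref{lemma:hyperplanes-triangles} and \ref{lemma:hyperplanes-triangle-convexcycle} simultaneously.
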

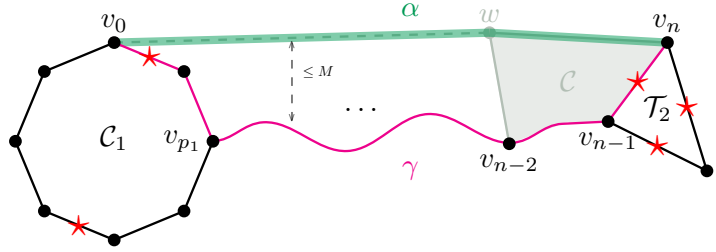
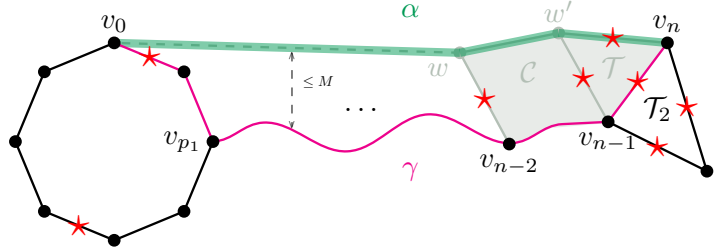
\begin{figure}[ht]
\centering
\begin{subfigure}{0.5\textwidth}
    \resizebox{1.5\linewidth}{!}{\input{media/tri-cycle-case1}}
    \caption{Case $d(v_0,v_n)=n-2$ of Lemma \ref{lemma:hyperplanes-triangle-convexcycle}.}
    \label{fig:tri-cycle-case1}
\end{subfigure}
\hfill
\begin{subfigure}{0.5\textwidth}
\centering
    \resizebox{1.5\linewidth}{!}{\input{media/tri-cycle-case2}}
    \caption{Case $d(v_0,v_n)=n-1$ of Lemma \ref{lemma:hyperplanes-triangle-convexcycle}.}
    \label{fig:tri-cycle-case2}
\end{subfigure}
\caption{The induction step of Lemma \ref{lemma:hyperplanes-triangle-convexcycle}. We mark by {\color{red} $\star$} the edges which belong to the hyperplane $\HH$.}
\label{fig:tri-cycle}
\end{figure}
\begin{proof}
    Denote by $\CC_1$ the convex even cycle of length $\ell(\CC_1)=2m_1$ containing the edge $\{v_0,v_1\}$ and $\TTT_2$ the $3$-cycle containing $\{v_{n-1},v_n\}$. Recall that by Lemma \ref{lemma:facts-caydsx}\ref{lemma:intersection3even}, $\CC_1$ and $\TTT_2$ intersect in at most an edge, and if they intersect in an edge, then $m_1=2$. 
    Any minimal non-geodesic path can be shortened by a combination of the transformations \ref{transf:backtrack}, \ref{transf:triangle} and \ref{transf:flip}; it then follows from Lemma \ref{lemma:transf} that the shorter paths obtained by these compositions of transformations fellow-travel the original one with constant $\leq 2$.\\
    If not, we proceed by induction on $n \geq p_1 +1$, where $p_1:=\ell(\gamma \cap \CC_1) \leq m_1$ (by \eqref{eq:path-on-cycle}). For the base case, $n= p_1 +1$, so $\CC_1 \cap \TTT_2 = \{v_{p_1}\}$ and $d(v_0, v_{p_1+1}) \in \{p_1-1, p_1\}$. We apply the cycle condition (resp. the triangle and cycle conditions), which yields a shorter path from $v_0$ to $v_{p_1+1}$ at distance from $\gamma$ bounded by $m_1$.
    Now, suppose that for each minimal non-geodesic path of length up to $n-1$, there exists a strictly shorter path $\alpha$ that $M$-fellow travels it and let {\color{magenta} $\gamma$} be a minimal non-geodesic path of length $n$. 
    Then, it follows from the minimality of $\gamma$ that 
    \begin{equation*}
        d(v_0,v_n) \in \{n-2,n-1\}.
    \end{equation*}
    The induction step closely resembles the base case above and the inductive part of the proof of Lemma \ref{lemma:hyperplanes-triangles}; see 
    Figure \ref{fig:tri-cycle}.
%
%
%
\end{proof}

\begin{lemma} \label{lemma:hyperplanes-convexcycles}
    Let $\gamma = \{v_0, v_1, \ldots, v_n\}$ be as described above. If $\{v_0,v_1\}$ and $\{v_{n-1}, v_n\}$ are both edges of convex even cycles, then there exists a path $\alpha$ such that $\gamma$ and $\alpha$ $2M$-fellow travel 
    and $\ell(\alpha) < \ell(\gamma)$.
\end{lemma}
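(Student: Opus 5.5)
We follow the same strategy as in Lemmas \ref{lemma:hyperplanes-triangles} and \ref{lemma:hyperplanes-triangle-convexcycle}: an induction on the length $n$ of the minimal non-geodesic path $\gamma$, where at each step we push the last $\HH$-edge one step closer to $v_0$ using the cycle condition. Denote by $\CC_1$ and $\CC_2$ the convex even (hence dihedral, by Lemma \ref{lemma:facts-caydsx}\ref{lemma:convex-dihedral}) cycles, of lengths $2m_1$ and $2m_2$, containing $\{v_0,v_1\}$ and $\{v_{n-1},v_n\}$ respectively. We set $p_i := \ell(\gamma\cap\CC_i)$, so that $p_i\le m_i$ by \eqref{eq:path-on-cycle}.

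\textbf{Degenerate case.} Suppose first that $\CC_1$ and $\CC_2$ share at least two edges. By the intersection of even cycles property they coincide. Then $\gamma$ enters and leaves the same dihedral cycle through two opposite edges of it. A flip \ref{transf:flip} around $\CC_1$, followed by the removal of the resulting backtrack \ref{transf:backtrack} (or a triangle shortening \ref{transf:triangle}), gives a shorter path. By Lemma \ref{lemma:transf} it $M$-fellow travels $\gamma$.

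\textbf{Base case.} Otherwise, we induct on $n\ge p_1+p_2$. In the base case the subpath $\gamma\cap\CC_1$ is immediately followed by $\gamma\cap\CC_2$, meeting at a single vertex $v_{p_1}$. We have $d(v_0,v_n)\in\{n-2,n-1\}$ by minimality. We apply the cycle condition at $v_n$, with basepoint $v_0$ and predecessors $v_{n-1}$ and a neighbour of $v_n$ on a geodesic towards $v_0$ (if needed, first invoking the triangle condition, as in Lemma \ref{lemma:hyperplanes-triangle-convexcycle}). This produces a convex cycle whose vertex opposite $v_n$ lies in the relevant interval. It yields a path $\alpha$ of length $<n$ from $v_0$ to $v_n$ obtained by travelling around $\CC_1$ and then around a cycle adjacent to $\CC_2$. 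Each vertex of $\alpha$ is joined to the corresponding vertex of $\gamma$ by going around at most two dihedral cycles, each contributing distance at most $M$. This is exactly why the constant $2M$, rather than $M$, appears.

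\textbf{Inductive step.} Assume the result holds for minimal non-geodesic paths of length $<n$. Since $\{v_{n-1},v_n\}\in\HH$, we use the convex cycle $\CC_2$ (replaced, if necessary, by the cycle produced by the cycle condition applied to $v_0,v_{n-1},v_n$ and a predecessor of $v_n$). We obtain a $4$-cycle or dihedral cycle, and a vertex $w$ adjacent to $v_{n-2}$ such that $\{v_{n-2},w\}\in\HH$, the edges of $\HH$ being opposite in this cycle. The path $\gamma' = \eval{\gamma}_{[v_0,v_{n-2}]}\cup\{v_{n-2},w\}$ is again minimal non-geodesic, of length $n-1$, starting with the $\CC_1$-edge and ending with an edge of a convex even cycle. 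It may also end with an edge of a $3$-cycle, in which case we apply Lemma \ref{lemma:hyperplanes-triangle-convexcycle} instead.

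The induction hypothesis gives $\hat\alpha$, shorter than $\gamma'$ and $2M$-fellow travelling it. We then set $\alpha=\hat\alpha$ followed by the path from $w$ to $v_n$ along the other side of the cycle. This path has length at most the length of $\{v_{n-2},v_{n-1},v_n\}$ minus one relative to the original. The synchronous distance from $\alpha$ to $\gamma$ is controlled by the distance from $\hat\alpha$ to $\gamma'$ plus the displacement inside a single dihedral cycle near the end, where $\gamma'$ and $\gamma$ differ.

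\textbf{Main obstacle.} The hardest part is bookkeeping the fellow-travelling constant: a naive induction accumulates error at each step. To avoid this, we make the $2M$ bound explicit rather than inductive. We would show that $\alpha$ agrees with a sequence of flips applied to $\gamma$, each flip moving vertices within one dihedral cycle, and that at any time index $t$ the vertices $\gamma(t)$ and $\alpha(t)$ are separated by at most one "strip" of $\HH$-adjacent cycles and one terminal shortening. This gives $d(\gamma(t),\alpha(t))\le m_1+m_2\le 2M$. We would first check this carefully in the case where all cycles are $4$-cycles (the carrier of $\HH$ is a ladder, so the distance is at most $2$), and then handle the unique possibly long cycles at the two ends.
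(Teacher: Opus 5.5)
Your overall scheme (induction on $n$ from the base case $n=p_1+p_2$, pushing the last $\HH$-edge back towards $v_0$) matches the paper's. However, the inductive step as stated fails.

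\textbf{The main gap.} You claim that $\CC_2$, or the cycle produced by the cycle condition, always yields $w$ adjacent to $v_{n-2}$ with $\{v_{n-2},w\}\in\HH$. This is true only for a $4$-cycle, and you must distinguish $d(v_0,v_n)=n-1$ from $d(v_0,v_n)=n-2$.

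In the first case, $v_{n-1}$ and $v_n$ are equidistant from $v_0$, so the cycle condition cannot be applied at $v_n$ as you write. One first applies the triangle condition to $v_0,v_{n-1},v_n$ (giving $w'$), then the cycle condition to $v_0,v_{n-2},w',v_{n-1}$. It is the $3$-cycle $\{v_{n-1},v_n,w'\}$ which, via Lemma \ref{lemma:facts-caydsx}\ref{lemma:intersection3even}, forces $\CC_2$ and the new cycle to be $4$-cycles, so your push works there.

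In the second case, the cycle condition is applied at $z=v_{n-1}$ with predecessors $v_{n-2},v_n$. It returns a convex cycle through $v_{n-2},v_{n-1},v_n$ (equal to $\CC_2$ as soon as $p_2\ge 2$), which may have length $2m_2>4$. Then:
\begin{itemize}
\item its edge opposite $\{v_{n-1},v_n\}$ is not incident to $v_{n-2}$;
\item $\gamma$ leaves the cycle before reaching that edge (one may assume $p_2<m_2$, otherwise flip);
\item in general $v_{n-2}$ is incident to no edge of $\HH$ at all. In a Coxeter group, if $t,s$ label $\{v_{n-2},v_{n-1}\}$ and $\{v_{n-1},v_n\}$, such an edge would carry the label $tst$, which is not a generator when $m_2\ge 3$.
\end{itemize}
So there is no $\gamma'$ to induct on. This is precisely where the paper uses a different idea. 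It iterates the cycle condition backwards along $\gamma$, starting where $\gamma$ enters $\CC_2$ and each time using the other predecessor on the previously produced cycle, until it reaches a convex cycle $\CC^*$ containing $\{v_0,v_1\}$. This produces an $\HH$-edge $\{v_{k+1},w\}$ at an interior vertex $v_{k+1}$ of $\gamma$. The induction hypothesis is then applied separately to $\gamma_1=\gamma|_{[v_0,v_{k+1}]}\cup\{v_{k+1},w\}$ and $\gamma_2=\{w,v_{k+1}\}\cup\gamma|_{[v_{k+1},v_n]}$, and the two outputs are concatenated.

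\textbf{The ``Main obstacle'' paragraph does not close this hole.} It is a programme (``we would show'') resting on the unproved premise that only the two end cycles can be long, so that the rest of the carrier of $\HH$ is a ladder of $4$-cycles. That premise fails in general: for the Coxeter group of type $\tilde{A}_2$ (all labels $3$), $\caygood$ has no $4$-cycles, and the carrier of every hyperplane is a chain of hexagons.

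Conversely, the accumulation you worry about does not occur in the one-step push. There $\alpha$ agrees with $\hat\alpha$, and $\gamma'$ with $\gamma$, up to time $n-3$. Afterwards every vertex involved lies within distance $2$ of $w$, so the synchronous bound stays at $2M$ (recall $M\ge 2$). The real difficulty is the long-cycle case above.

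\textbf{Two smaller points.}
\begin{itemize}
\item You never treat the case where $\CC_1\cap\CC_2$ is exactly one edge; your base case assumes they meet only at $v_{p_1}$. The paper handles this case separately. On the other hand, your case $\CC_1=\CC_2$ cannot occur: by convexity $\gamma\subseteq\CC_1$, whence $\ell(\gamma\cap\CC_1)=m_1+1$, contradicting \eqref{eq:path-on-cycle}.
\item In the triangle case, $\alpha=\hat\alpha\cup\{w,w'\}\cup\{w',v_n\}$ has length $\ell(\hat\alpha)+2$. So $\ell(\alpha)<n$ requires $\ell(\hat\alpha)=d(v_0,w)=n-3$, i.e.\ an inductive statement that outputs a geodesic rather than merely a shorter path. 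The paper's sketch also passes over this point.
\end{itemize}
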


\begin{proof}
    Denote by $\CC_1$ the convex even cycle containing the edge $\{v_0, v_1\}$, and $\CC_2$ the one containing $\{v_{n-1}, v_n\}$ and let their lengths be $\ell(\CC_i) = 2m_i$, for $i\in \{1,2\}$. Here, we may assume that $\ell(\CC_i\cap\gamma)<m_i$, for otherwise we could apply a flip and be done. By assumption \eqref{eq:path-on-cycle} and minimality of $\gamma$, 
    $\CC_1 \neq \CC_2$, which implies by the intersection of even cycles property that $\CC_1$ and $\CC_2$ intersect in at most an edge. If they intersect in an edge,
    then $\ell(\gamma) \leq m_1 + m_2 -2$ and there exists a path $\alpha$ of length $d(v_0, v_n) \in \{ \ell(\gamma) - 2, \ell(\gamma) -1 \}$, and by a similar analysis to Lemma \ref{lemma:transf}\ref{lemma:transf-flip}, we find that $\alpha$ and $\gamma$ $2M$-fellow travel. \\
    If not, we proceed by induction on $n \geq p_1 + p_2$, where $p_i:=\ell(\gamma \cap \CC_i)$. For the base case $n=p_1+p_2$, it follows that $\CC_1 \cap \CC_2 = \{v_{p_1}\}$ and $d(v_0, v_n)~\in~\{p_1~+~p_2~-~2,~p_1~+~p_2~-~1\}$. By an analogous reasoning to above, $\ell(\gamma) = p_1 + p_2 \leq m_1 + m_2$ and we can find a path $\alpha$ such that $\ell(\alpha) = d(v_0,v_n) < \ell(\gamma)$ and $\gamma$ and $\alpha$ $2M$-fellow travel. Now, suppose that for each minimal non-geodesic path up to length $n-1$, there exists a shorter path $\alpha$ that $2M$-fellow travels it and let {\color{magenta} $\gamma$} be a minimal non-geodesic path of length $n$. 
    Then, it follows from the minimality of $\gamma$ that
        \begin{equation*}
            d(v_0, v_n) \in \{n-2, n-1\}.
        \end{equation*}
    For the latter, $d(v_0, v_n) = n-1$, we refer to Figure \ref{fig:cycle-cycle-case1}. 
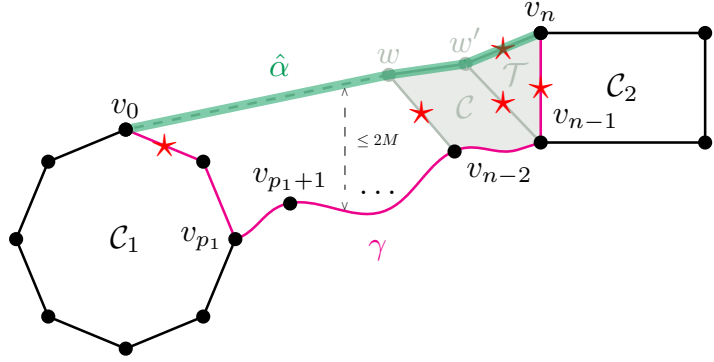
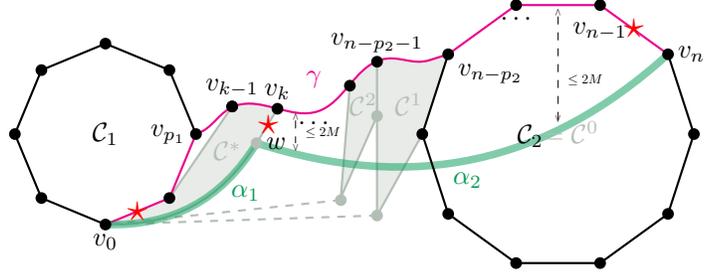
\begin{figure}[!tbp]
\centering
\begin{subfigure}{0.5\textwidth}
    \resizebox{1.5\linewidth}{!}{\input{media/cycle-cycle-case1}}
    \caption{Case $d(v_0,v_n)=n-1$ of Lemma \ref{lemma:hyperplanes-convexcycles}.}
    \label{fig:cycle-cycle-case1}
\end{subfigure}
\hfill
\begin{subfigure}{0.5\textwidth}
\centering
    \resizebox{1.5\linewidth}{!}{\input{media/cycle-cycle-case2}}
    \caption{Case $d(v_0,v_n)=n-2$ of Lemma \ref{lemma:hyperplanes-convexcycles}.}
    \label{fig:cycle-cycle-case2}
\end{subfigure}
\caption{The induction step of Lemma \ref{lemma:hyperplanes-convexcycles}. We mark by {\color{red} $\star$} the edges which belong to the hyperplane $\HH$.}
\label{fig:cycle-cycle}
\end{figure}
    First, apply the triangle condition to $v_0$, $v_{n-1}$, $v_{n}$, and then the cycle condition to $v_0$, $v_{n-2}$, $w$ and $v_{n-1}$. Note that $\CC_2$ and {\color{ashgrey} $\CC$} are both $4$-cycles by Lemma \ref{lemma:facts-caydsx}\ref{lemma:intersection3even}. In this case, $\{w, v_{n-2}\} \in \HH$ and we apply the induction hypothesis to the path $\hat{\gamma} := \eval{\gamma}_{[v_0, v_{n-2}]} \cup \{v_{n-2},w\}$, which yields the path $\hat{\alpha}$ with the same endpoints $v_0$ and $w$, such that $\ell(\hat{\alpha}) \leq n-2$ (actually, $\ell(\hat{\alpha}) \in \{n-2,n-3\}$ by minimality of $\gamma$) and it $2M$-fellow travels $\hat{\gamma}$. Finally, the extension of the path $\hat{\alpha}$ to {\color{ForestGreen} $\alpha = \hat{\alpha} \cup \{w,w'\} \cup \{w',v_n\}$} satisfies all the required properties with respect to $\gamma$.
    For the former, $d(v_0, v_n) = n-2$, we refer to Figure \ref{fig:cycle-cycle-case2}. Apply the cycle condition iteratively to the vertices $v_0$, $v_j$, $v_{j+1}$ and $v_{*}$, for decreasing values of $j$ from $j= n-p_2$, and where $v_*$ is the predecessor of $v_{j+1}$ (that isn't $v_j$) on the cycle obtained from the previous application of the cycle condition. 
    We terminate this process once we get a convex even cycle $\CC^{*}$ containing the edge $\{v_0, v_1\}$; in this case, $\{w, v_{k+1}\} \in \HH$ 
    and the paths 
        \begin{equation*}
            \gamma_1 := \eval{\gamma}_{[v_0, v_{k+1}]} \cup \{v_{k+1}, w\} \quad \text{and} \quad \gamma_2 := \{w, v_{k+1}\} \cup \eval{\gamma}_{[v_{k+1}, v_n]}
        \end{equation*}
    satisfy the induction hypothesis. Thus, there exist two paths {\color{ForestGreen} $\alpha_1$}, with endpoints $v_0$ and $v_{k+1}$, and {\color{ForestGreen} $\alpha_2$}, with endpoints $v_{k+1}$ and $v_n$, which are strictly shorter than, and $2M$-fellow travel $\gamma_1$ and $\gamma_2$, respectively.
    Finally, the concatenation of the paths $\alpha_1$ and $\alpha_2$ gives a path {\color{ForestGreen} $\alpha$} satisfying the required properties with respect to $\gamma$.

\end{proof}

\begin{proof}[Proof of Theorem \ref{thm:dyer-fftp}]
    We combine the three Lemmas \ref{lemma:hyperplanes-triangles}, \ref{lemma:hyperplanes-triangle-convexcycle} and \ref{lemma:hyperplanes-convexcycles} to obtain this result.
\end{proof}

Finally, putting together Theorems \ref{thm:dyer-fftp} and \ref{thm:NS}, we get the following corollary of Theorem \ref{thm:dyer-fftp}.

\begin{corollary}
\label{cor:cannon-pair}
    Let $D=D(\GG,m,f)$ be a Dyer group, then $(D,X_S)$ has finitely many cone types and thus is a Cannon pair.
\end{corollary}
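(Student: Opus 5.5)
The corollary follows by a direct chain of implications, with no new geometric input. First, I would check that $(D,X_S)$ falls within the framework of Section~\ref{subsec:conetypes}. There we need a finitely generated group with a finite symmetric generating set.

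\emph{Finiteness.} Since $\GG$ is finite, $X_S$ is a finite union over $v\in V(\GG)$. Each vertex with $f(v)<\infty$ contributes the $f(v)-1$ nontrivial elements of its vertex group. Each vertex with $f(v)=\infty$ contributes only the two elements $x_v^{\pm 1}$.

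\emph{Symmetry.} For $f(v)<\infty$, the set $\{x_v^\alpha \mid \alpha\in\ZZ_{f(v)}\setminus\{0\}\}$ is closed under inversion, since $(x_v^{\alpha})^{-1}=x_v^{-\alpha}$ and $-\alpha\neq 0$ in $\ZZ_{f(v)}$. For $f(v)=\infty$, symmetry holds by construction.

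\emph{Generation.} $X_S$ generates $D$ because it contains the standard Dyer generating set $X$.

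Consequently $\caygood$ is the locally finite Cayley graph to which the notions of word cone type and FFTP apply.

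Next, Theorem~\ref{thm:dyer-fftp} gives that $(D,X_S)$ has the FFTP, with constant $2M$. Applying Theorem~\ref{thm:NS} of Neumann and Shapiro with $G=D$ and $S=X_S$ shows that $\caygood$ has finitely many cone types. By the definition of a Cannon pair, $(D,X_S)$ is then a Cannon pair.

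There is no genuine obstacle in this argument; all the substance lies in Theorem~\ref{thm:dyer-fftp}. The only point requiring care is the symmetric and finite hypothesis needed to invoke Theorem~\ref{thm:NS}. This is exactly why the corollary uses $X_S$ rather than $S(X)$, which may be infinite when some $f(v)=\infty$.
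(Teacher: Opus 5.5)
Your proposal is correct and follows the paper's argument: the paper obtains the corollary by combining Theorem~\ref{thm:dyer-fftp} with the Neumann--Shapiro result, Theorem~\ref{thm:NS}. Your check that $X_S$ is a finite, symmetric generating set is a harmless addition that the paper leaves implicit.
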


 As mentioned, the FFTP is sensitive to the generating set, and it is an open problem whether Dyer groups have finitely many cone types with respect to \textit{all} generating sets. We nevertheless add that by a result of Antolín and Ciobanu \cite[Proposition $3.2$]{ACrelhyp}, one can build on $X_S$ to exhibit infinitely many generating sets for which a Dyer group has the FFTP.

\section*{Acknowledgments}
I would like to thank my advisor, Tatiana Nagnibeda, for her encouragement in pursuing this topic and for her guidance and support.
I am also grateful to Luis Paris, Murray Elder and Laura Ciobanu for introducing me respectively and chronologically to Dyer groups, the falsification by fellow-traveller property and mediangle graphs, and for the interesting and enriching discussions around these topics.
This work was supported by the Swiss NSF grant 200020-200400 and the NCCR SwissMAP.


\bibliographystyle{abbrv}
\bibliography{refs}

\end{document}

%% file: media/dyer-gp-example.tex

\begin{tikzpicture}

\filldraw[black] (-1.5,0) circle (1.5pt);
\filldraw[black] (-1.5,0) node[anchor=south]{\scalebox{1}{$\infty$}};
\filldraw[black] (0,0) circle (1.5pt);
\filldraw[black] (0,0) node[anchor=south]{\scalebox{1}{$2$}};
\filldraw[black] (1.5,0) circle (1.5pt);
\filldraw[black] (1.5,0) node[anchor=south]{\scalebox{1}{$2$}};

\filldraw[black] (-0.75,0) node[anchor=south]{\scalebox{1}{$2$}};
\filldraw[black] (0.75,0) node[anchor=south]{\scalebox{1}{$m$}};

\draw[black] (-1.5,0) -- (0,0) -- (1.5,0);

\end{tikzpicture}

%% file: media/fftp.tex

\begin{tikzpicture}

\draw [black] plot [smooth, tension=1] coordinates { (-1,0.5) (0,1.2) (1,1) (2,1.2) (2.5,0.3)};
\filldraw[black] (1,1) node [anchor=south]{\scalebox{0.8}{$ \gamma$}};

\draw [black] plot [smooth, tension=1] coordinates { (-1,0.5) (0,0.2) (1,0.3) (2,0.5) (2.5,0.3)};
\filldraw[black] (1,0.3) node [anchor=north]{\scalebox{0.8}{$ \alpha$}};

\filldraw[black] (-1,0.5) circle (0.5pt);
\filldraw[black] (2.5,0.3) circle (0.5pt);

\draw [black, dashed, line width=0.1mm] plot [smooth, tension=1] coordinates { (0,1.15) (0,0.25) };
\filldraw[black] (0,0.45) node [anchor=north]{\scalebox{.25}{$\bigvee$}};
\filldraw[black] (0,0.95) node [anchor=south]{\scalebox{.25}{$\bigwedge$}};
\filldraw[black] (0,0.70) node [anchor=west]{\scalebox{.5}{$ \leq k$}};

\draw [black, dashed, line width=0.1mm] plot [smooth, tension=1] coordinates { (2,1.1) (2,0.5) };
\filldraw[black] (2,0.77) node [anchor=north]{\scalebox{.25}{$\bigvee$}};
\filldraw[black] (2,0.9) node [anchor=south]{\scalebox{.25}{$\bigwedge$}};
\filldraw[black] (1.9,0.8) node [anchor=west]{\scalebox{.5}{$ \leq k$}};

\end{tikzpicture}

%% file: media/new-gens-triangle-setup.tex


\begin{tikzpicture}

\filldraw[black] (-2,5) circle (1pt); 
\filldraw[black] (-2,5) node[anchor=east] {$g$}; 
\filldraw[black] (2,5) circle (1pt); 
\filldraw[black] (2,5) node[anchor=west] {$h$}; 
\filldraw[black] (0,0) circle (1pt);
\filldraw[black] (0,0) node[anchor=north] {$o$}; 

\draw[black] (-2,5) -- (2,5);
\draw[black] (0,0) -- (-2,5);
\draw[black] (0,0) -- (2,5);

\filldraw[black] (0,5) node[anchor=south]{\scalebox{.75}{$e_{g,h}$}}; 

\filldraw[black] (-2/5,1) circle (1pt);
\filldraw[black] (-4/5,2) circle (1pt);
\filldraw[black] (-6/5,3) circle (1pt);
\filldraw[black] (-8/5,4) circle (1pt);

\filldraw[black] (-1/5,0.5) node[anchor=east]{\scalebox{.75}{$e_{g_1}$}}; 
\filldraw[black] (-3/5,1.5) node[anchor=east]{\scalebox{.75}{$e_{g_2}$}};
\filldraw[black] (-5/5,2.5) node[anchor=east]{\scalebox{.75}{$\ldots$}};
\filldraw[black] (-7/5,3.5) node[anchor=east]{\scalebox{.75}{$e_{g_{n-1}}$}};
\filldraw[black] (-9/5,4.5) node[anchor=east]{\scalebox{.75}{$e_{g_n}$}};

\filldraw[black] (2/5,1) circle (1pt);
\filldraw[black] (4/5,2) circle (1pt);
\filldraw[black] (6/5,3) circle (1pt);
\filldraw[black] (8/5,4) circle (1pt);

\filldraw[black] (1/5,0.5) node[anchor=west]{\scalebox{.75}{$e_{h_1}$}};
\filldraw[black] (3/5,1.5) node[anchor=west]{\scalebox{.75}{$e_{h_1}$}};
\filldraw[black] (5/5,2.5) node[anchor=west]{\scalebox{.75}{$\ldots$}};
\filldraw[black] (7/5,3.5) node[anchor=west]{\scalebox{.75}{$e_{h_{n-1}}$}};
\filldraw[black] (9/5,4.5) node[anchor=west]{\scalebox{.75}{$e_{h_n}$}};

\filldraw[red] (0,4) circle (1pt);
\filldraw[red] (0,4) node[anchor=north] {$z$};
\draw[red] (-2,5) -- (0,4);
\draw[red] (2,5) -- (0,4);
\draw[red, dashed] (0,0) -- (0,4);
\filldraw[red] (-1,9/2) node[anchor=north]{\scalebox{.75}{$e_{g,z}$}};
\filldraw[red] (1,9/2) node[anchor=north]{\scalebox{.75}{$e_{h,z}$}};

\end{tikzpicture}


%% file: media/new-gens-triangle-a.tex


\begin{tikzpicture}

\filldraw[black] (-2,2) circle (1pt); 
\filldraw[black] (-2,2) node[anchor=east] {$g$}; 
\filldraw[black] (2,2) circle (1pt); 
\filldraw[black] (2,2) node[anchor=west] {$h$}; 
\filldraw[black] (0,0) circle (1pt);
\filldraw[black] (0,0) node[anchor=north] {$z$}; 

\draw[black] (-2,2) -- (2,2);
\draw[black] (0,0) -- (-2,2);
\draw[black] (0,0) -- (2,2);

\filldraw[black] (-2/5,2/5) circle (1pt); 
\filldraw[black] (-8/5, 8/5) circle (1pt);
\filldraw[black] (2/5,2/5) circle (1pt); 
\filldraw[black] (8/5,8/5) circle (1pt);

\filldraw[black] (0,2) node[anchor=north east]{\scalebox{.75}{$x_v$}};
\filldraw[black] (-1,1) node[anchor=east]{\scalebox{.75}{$\ldots$}}; 
\filldraw[black] (-9/5,9/5) node[anchor= north]{\scalebox{.75}{$x_v$}};
\filldraw[black] (-1/5,1/5) node[anchor=north east]{\scalebox{.75}{$x_v$}};

\filldraw[black] (1/5,1/5) node[anchor=north west]{\scalebox{.75}{$x_v$}};
\filldraw[black] (9/5,9/5) node[anchor=north west]{\scalebox{.75}{$x_v$}};
\filldraw[black] (1,1) node[anchor=west]{\scalebox{.75}{$\ldots$}}; 

\draw [decorate,
    decoration = {calligraphic brace}] (2.3536,1.6464) -- (0.3536,-0.3536);
\filldraw[black] (1.3536,0.6464) node[anchor=north west]{\scalebox{.75}{$\lvert \beta \rvert$}}; 

\draw [decorate,
    decoration = {calligraphic brace}] (-0.3536,-0.3536) -- (-2.3536,1.6464);
\filldraw[black] (-1.1536,0.4464) node[anchor=north east]{\scalebox{.75}{$\lvert \beta \rvert$}};

\end{tikzpicture}


%% file: media/new-gens-triangle-b.tex


\begin{tikzpicture}

\draw[red] (-2,4) -- (2,5);
\draw[black] (0,0) -- (-2,4);
\draw[black] (0,0) -- (2,5);

\filldraw[black] (0,4.5) node[anchor=south]{\scalebox{.75}{$e_{g,h}$}}; 
\filldraw[black] (1.8,4.5) node[anchor=west]{\scalebox{.75}{$e_{b,h}$}};

\filldraw[black] (-2/5,4/5) circle (1pt); 
\filldraw[black] (2/5,1) circle (1pt);

\filldraw[black] (-1,2) node[anchor=east]{\scalebox{.75}{$\ldots$}}; 
\filldraw[black] (1,2.5) node[anchor=west]{\scalebox{.75}{$\ldots$}}; 

\filldraw[red] (0,2.5) circle (1pt);
\filldraw[red] (0,2.5) node[anchor=south] {$z$};
\draw[red, dashed] (0,0) -- (0,2.5);

\filldraw[red] (0,3.5) node[anchor=south]{$\CC$};

\node at (-2,4)(g){};
\node at (8/5,4)(b){};
\node at (0,2.5)(p){};

\path (p) edge[red, bend left=23] node [right] {} (g)
(p) edge[red, bend right=20] node [right] {} (b);

\draw[red] (8/5,4) -- (2,5);

\filldraw[black] (-2,4) circle (1pt); 
\filldraw[black] (-2,4) node[anchor=east] {$g$}; 
\filldraw[black] (2,5) circle (1pt); 
\filldraw[black] (2,5) node[anchor=west] {$h$}; 
\filldraw[black] (0,0) circle (1pt);
\filldraw[black] (0,0) node[anchor=north] {$o$}; 
\filldraw[black] (8/5,4) circle (1pt); 
\filldraw[black] (8/5,4) node[anchor=west]{\scalebox{.75}{$b$}}; 

\draw[Cyan, fill, opacity=0.2, line width=1mm] (0,0) -- (-2,4);
\draw[Cyan, fill, opacity=0.2, line width=1mm] (-2,4) -- (2,5);
\draw[Cyan, fill, opacity=0.2, line width=1mm] (0,0) -- (2,5);

\filldraw[Cyan] (6/5,1) node[anchor=north] {$\CC_o$}; 

\end{tikzpicture}


%% file: media/new-gens-triangle-b-i.tex


\begin{tikzpicture}

\filldraw[black] (0,2) circle (1pt); 
\filldraw[black] (0,2) node[anchor=south] {$h$}; 
\filldraw[black] (-2,0) circle (1pt); 
\filldraw[black] (-2,0) node[anchor=east] {$g$}; 
\filldraw[black] (0,-2) circle (1pt);
\filldraw[black] (0,-2) node[anchor=north] {$o$}; 
\filldraw[black] (2,0) circle (1pt);
\filldraw[black] (2,0) node[anchor=west] {$b$}; 

\draw[black] (0,2) -- (-2,0);
\draw[black] (0,2) -- (2,0);
\draw[black] (0,-2) -- (-2,0);
\draw[black] (0,-2) -- (2,0);

\filldraw[black] (1/2,3/2) circle (1pt);
\filldraw[black] (3/2,1/2) circle (1pt);
\filldraw[red] (-1.5,-0.5) circle (1pt);
\filldraw[red] (-1.5,-0.5) node[anchor=north east] {$z'$}; 
\draw[black] (1/2,3/2) -- (-1.5,-0.5);
\filldraw[black] (-1/2,-3/2) circle (1pt);

\filldraw[black] (1/4,7/4) node[anchor=south west]{\scalebox{.75}{$x_w$}}; 
\filldraw[black] (7/4,1/4) node[anchor=south west]{\scalebox{.75}{$x_w$}};
\filldraw[black] (-1.75,-0.25) node[anchor=north east]{\scalebox{.75}{$x_w$}};
\filldraw[black] (-1/4,-7/4) node[anchor=north east]{\scalebox{.75}{$x_w$}}; 

\filldraw[black] (-1,1) node[anchor=south east]{\scalebox{.75}{$x_v^\alpha$}}; 
\filldraw[black] (1,1) node[anchor=west]{\scalebox{.75}{$\ldots$}}; 
\filldraw[black] (-1,-1) node[anchor=east]{\scalebox{.75}{$\ldots$}}; 
\filldraw[black] (1,-1) node[anchor=north west]{\scalebox{.75}{$x_v^\alpha$}}; 
\filldraw[black] (-5/8,5/8) node[anchor=north west]{\scalebox{.75}{$x_v^\alpha$}}; 

\draw [decorate,
    decoration = {calligraphic brace}] (0.5,2.5) -- (2.5,0.5);
\filldraw[black] (2,2) node[anchor=north east]{\scalebox{.75}{$\beta$}};

\draw [decorate,
    decoration = {calligraphic brace}] (-0.5,-2.5) -- (-2.5,-0.5);
\filldraw[black] (-2,-2) node[anchor=south west]{\scalebox{.75}{$\beta$}};

\end{tikzpicture}


%% file: media/new-gens-k4.tex


\begin{tikzpicture}

\filldraw[black] (-2,0) circle (1pt); 
\filldraw[black] (-2,0) node[anchor=east] {$g$}; 
\filldraw[black] (2,0) circle (1pt); 
\filldraw[black] (2,0) node[anchor=west] {$h$}; 
\filldraw[black] (0,2) circle (1pt);
\filldraw[black] (0,2) node[anchor=south] {$p$}; 
\filldraw[black] (0,-2) circle (1pt);
\filldraw[black] (0,-2) node[anchor=north] {$q$}; 

\draw[black] (-2,0) -- (2,0);
\draw[black] (-2,0) -- (0,2);
\draw[black] (-2,0) -- (0,-2);
\draw[black] (2,0) -- (0,2);
\draw[black] (2,0) -- (0,-2);
\draw[red, dashed] (0,2) -- (0,-2);

\filldraw[black] (-1,1) node[anchor=east]{\scalebox{.75}{$e_{g,p}$}}; 
\filldraw[black] (1,1) node[anchor=west]{\scalebox{.75}{$e_{h,p}$}};
\filldraw[black] (-1,-1) node[anchor=east]{\scalebox{.75}{$e_{g,q}$}};
\filldraw[black] (1,-1) node[anchor=west]{\scalebox{.75}{$e_{q,h}$}};
\filldraw[red] (0,0.2) node[anchor=west]{\scalebox{.75}{$e_{p,q}$}};
\filldraw[black] (0,-0.2) node[anchor=east]{\scalebox{.75}{$e_{g,h}$}};

\end{tikzpicture}


%% file: media/new-gens-cycle-setup.tex


\begin{tikzpicture}

\filldraw[black] (-2,5) circle (1pt); 
\filldraw[black] (-2,5) node[anchor=east] {$g$}; 
\filldraw[black] (2,5) circle (1pt); 
\filldraw[black] (2,5) node[anchor=west] {$h$}; 
\filldraw[black] (0,0) circle (1pt);
\filldraw[black] (0,0) node[anchor=north] {$o$}; 
\filldraw[black] (0,6) circle (1pt);
\filldraw[black] (0,6) node[anchor=south] {$z$};

\draw[black] (0,0) -- (-2,5);
\draw[black] (0,0) -- (2,5);
\draw[black] (0,6) -- (2,5);
\draw[black] (0,6) -- (-2,5);

\filldraw[black] (-1,5.5) node[anchor=south east]{\scalebox{.75}{$e_{g,z}$}};
\filldraw[black] (1,5.5) node[anchor=south west]{\scalebox{.75}{$e_{h,z}$}};

\filldraw[black] (-1,2.5) node[anchor=east]{\scalebox{.75}{$\ldots$}}; 
\filldraw[black] (1,2.5) node[anchor=west]{\scalebox{.75}{$\ldots$}};

\filldraw[black] (-8/5,4) circle (1pt);
\filldraw[black] (-9/5,9/2) node[anchor=east]{\scalebox{.75}{$e_{g_n}$}}; 
\filldraw[black] (8/5,4) circle (1pt);
\filldraw[black] (9/5,9/2) node[anchor=west]{\scalebox{.75}{$e_{h_n}$}}; 
\filldraw[black] (-2/5,1) circle (1pt);
\filldraw[black] (-1/5,1/2) node[anchor=east]{\scalebox{.75}{$e_{g_1}$}};
\filldraw[black] (2/5,1) circle (1pt);
\filldraw[black] (1/5,1/2) node[anchor=west]{\scalebox{.75}{$e_{h_1}$}};

\filldraw[red] (0,3) circle (1pt);
\filldraw[red] (0,3) node[anchor=south] {$p$};
\draw[red, dashed] (0,0) -- (0,3);

\filldraw[red] (0,4) node[anchor=south]{$\CC$};

\node at (-2,5)(g){};
\node at (2,5)(h){};
\node at (0,3)(p){};

\path (p) edge[red, bend left=23] node [right] {} (g)
(p) edge[red, bend right=23] node [right] {} (h);

\end{tikzpicture}


%% file: media/new-gens-cycle-ii.tex


\begin{tikzpicture}

\filldraw[black] (-2,6) circle (1pt); 
\filldraw[black] (-2,6) node[anchor=east] {$g$}; 
\filldraw[black] (2,5) circle (1pt); 
\filldraw[black] (2,5) node[anchor=west] {$h$}; 
\filldraw[black] (0,0) circle (1pt);
\filldraw[black] (0,0) node[anchor=north] {$o$}; 
\filldraw[black] (0,6) circle (1pt);
\filldraw[black] (0,6) node[anchor=south] {$z$}; 
\filldraw[black] (-1,5) circle (1pt);
\filldraw[black] (-1,5) node[anchor=north east] {$p$}; 
\filldraw[red] (0,4) circle (1pt);
\filldraw[red] (0,4) node[anchor=north west] {$q$};

\draw[black] (0,0) -- (-2,6);
\draw[black] (0,0) -- (2,5);
\draw[black] (0,6) -- (2,5);
\draw[black] (0,6) -- (-2,6);
\draw[black] (-1,5) -- (0,6);
\draw[black] (-1,5) -- (-2,6);
\draw[black, dashed] (-1,5) -- (0,0);
\draw[red] (-1,5) -- (0,4);
\draw[red] (0,4) -- (2,5);

\filldraw[black] (-1,6) node[anchor=south]{\scalebox{.75}{$x_v$}};
\filldraw[black] (1,5.5) node[anchor=south west]{\scalebox{.75}{$x_w^\eta$}};
\filldraw[black] (-1.5,5.5) node[anchor=north]{\scalebox{.75}{$x_v^{\alpha}$}};
\filldraw[black] (-1/2,11/2) node[anchor=west]{\scalebox{.75}{$x_v^{\beta}$}};

\filldraw[black] (-1,2.5) node[anchor=east]{\scalebox{.75}{$\ldots$}}; 
\filldraw[black] (1,2.5) node[anchor=west]{\scalebox{.75}{$\ldots$}}; 

\draw[red, dashed] (0,0) -- (0,4);
\filldraw[red] (1,4.5) node[anchor=north west]{\scalebox{.75}{$x_v^\beta$}};
\filldraw[red] (-0.5,4.5) node[anchor=north]{\scalebox{.75}{$x_w^\eta$}};

\filldraw[red] (0,4.2) node[anchor=south]{$\CC$};



\end{tikzpicture}


%% file: media/new-gens-cycle-iii-pneqq.tex


\begin{tikzpicture}

\filldraw[black] (-2,6) circle (1pt); 
\filldraw[black] (-2,6) node[anchor=east] {$g$}; 
\filldraw[black] (2,6) circle (1pt); 
\filldraw[black] (2,6) node[anchor=west] {$h$}; 
\filldraw[black] (0,0) circle (1pt);
\filldraw[black] (0,0) node[anchor=north] {$o$}; 
\filldraw[black] (0,6) circle (1pt);
\filldraw[black] (0,6) node[anchor=south] {$z$};

\draw[black] (0,0) -- (-2,6);
\draw[black] (0,0) -- (2,6);
\draw[black] (0,6) -- (2,6);
\draw[black] (0,6) -- (-2,6);

\filldraw[black] (-1,6) node[anchor=south]{\scalebox{.75}{$e_{g,z}$}};
\filldraw[black] (1,6) node[anchor=south]{\scalebox{.75}{$e_{h,z}$}};

\filldraw[black] (-3/2,9/2) circle (1pt);
\filldraw[black] (-1.75,5.25) node[anchor=east]{\scalebox{.75}{$e_{g_n}$}}; 
\filldraw[black] (3/2,9/2) circle (1pt);
\filldraw[black] (1.75,5.25) node[anchor=west]{\scalebox{.75}{$e_{h_n}$}}; 
\filldraw[black] (-1/2,3/2) circle (1pt);
\filldraw[black] (-1/4,3/4) node[anchor=east]{\scalebox{.75}{$e_{g_1}$}};
\filldraw[black] (1/2,3/2) circle (1pt);
\filldraw[black] (1/4,3/4) node[anchor=west]{\scalebox{.75}{$e_{h_1}$}};

\filldraw[black] (-1,2.5) node[anchor=east]{\scalebox{.75}{$\ldots$}}; 
\filldraw[black] (1,2.5) node[anchor=west]{\scalebox{.75}{$\ldots$}};

\filldraw[red] (-0.75,9/2) circle (1pt);
\filldraw[red] (-0.75,9/2) node[anchor=north east] {$p$};
\draw[red, dashed] (0,0) -- (-0.75,9/2);
\draw[red] (-2,6) -- (-0.75,9/2);
\draw[red] (0,6) -- (-0.75,9/2);

\filldraw[red] (0.75,9/2) circle (1pt);
\filldraw[red] (0.75,9/2) node[anchor=north west] {$q$};
\draw[red, dashed] (0,0) -- (0.75,9/2);
\draw[red] (2,6) -- (0.75,9/2);
\draw[red] (0,6) -- (0.75,9/2);

\filldraw[blue] (0,3) circle (1pt);
\filldraw[blue] (0,3) node[anchor=west] {$r$};
\draw[blue, dashed] (0,0) -- (0,3);
\draw[blue] (0,3) -- (-0.75,9/2);
\draw[blue] (0,3) -- (0.75,9/2);

\filldraw[blue] (0,4.5) node[anchor=south] {$\CC$};

\filldraw[red] (-11/8,21/4) node[anchor=west]{\scalebox{.75}{$e_{p,g}$}};
\filldraw[red] (-3/8,21/4) node[anchor=south east]{\scalebox{.75}{$e_{p,z}$}};
\filldraw[blue] (-3/8,15/4) node[anchor=east]{\scalebox{.75}{$e_{r,p}$}};

\filldraw[red] (11/8,21/4) node[anchor= east]{\scalebox{.75}{$e_{q,h}$}};
\filldraw[red] (3/8,21/4) node[anchor=south west]{\scalebox{.75}{$e_{q,z}$}};
\filldraw[blue] (3/8,15/4) node[anchor=west]{\scalebox{.75}{$e_{r,q}$}};

\end{tikzpicture}


%% file: media/path-backtrack.tex


\resizebox{20em}{10em}{%

\begin{tikzpicture}

\draw [black, xshift=4cm
] plot [smooth, tension=1] coordinates { (-1,0.5) (0,0) (1,1) (2,-1) (2.5,0)};

\filldraw[black] (3,0.5) circle (0.5pt);
\filldraw[black] (3,0.42) node[anchor=south]{\scalebox{0.6}{$v_0$}};

\filldraw[black] (3.25,0.23) circle (0.5pt);
\filldraw[black] (3.25,0.2) node[anchor=south]{\scalebox{0.6}{$v_1$}};

\filldraw[black] (3.5,0.065) circle (0.5pt);
\filldraw[black] (3.5,0.03) node[anchor=south]{\scalebox{0.6}{$v_2$}};

\filldraw[black] (4,0.03) node[anchor=south]{\scalebox{0.6}{$\ldots$}};

\filldraw[black] (5,1) circle (0.5pt);
\filldraw[black] (5,0.95) node[anchor=south]{\scalebox{0.6}{$v_{i-1}$}};

\filldraw[black] (5.25,0.75) circle (0.5pt);
\filldraw[black] (5.25,0.73) node[anchor=west]{\scalebox{0.6}{$v_{i}=v_{i+2}$}};

\filldraw[black] (5.25,0.75) -- (5.1,0.4);

\filldraw[black] (5.1,0.4) circle (0.5pt);
\filldraw[black] (5.2,0.4) node[anchor=east]{\scalebox{0.6}{$v_{i+1}$}};

\filldraw[black] (5.42,0.4) circle (0.5pt);
\filldraw[black] (5.4,0.4) node[anchor=west]{\scalebox{0.6}{$v_{i+3}$}};

\filldraw[black] (5.7,-0.4) node[anchor=east]{\scalebox{0.6}{$\ldots$}};

\filldraw[black] (6.4,-0.635) circle (0.5pt);
\filldraw[black] (6.2,-0.7) node[anchor=south]{\scalebox{0.6}{$v_{n-1}$}};

\filldraw[black] (6.5,0) circle (0.5pt);
\filldraw[black] (6.5,-0.05) node[anchor=south]{\scalebox{0.6}{$v_{n}$}};

\draw[magenta, fill, opacity=0.2, line width=1mm] (5.1,0.4) -- (5.25,0.75); 

\end{tikzpicture}
}

%% file: media/path-triangle.tex

\resizebox{20em}{10em}{%
\begin{tikzpicture}

\draw [black, xshift=4cm
] plot [smooth, tension=1] coordinates { (-1,0.5) (0,0) (1,1) (2,-1) (2.5,0)};

\filldraw[black] (3,0.5) circle (0.5pt);
\filldraw[black] (3,0.42) node[anchor=south]{\scalebox{.6}{$v_0$}};

\filldraw[black] (3.25,0.23) circle (0.5pt);
\filldraw[black] (3.25,0.2) node[anchor=south]{\scalebox{.6}{$v_1$}};

\filldraw[black] (3.5,0.065) circle (0.5pt);
\filldraw[black] (3.5,0.03) node[anchor=south]{\scalebox{.6}{$v_2$}};

\filldraw[black] (4,0.03) node[anchor=south]{\scalebox{.6}{$\ldots$}};

\filldraw[black] (5,1) circle (0.5pt);
\filldraw[black] (5,0.95) node[anchor=south]{\scalebox{.6}{$v_{i-1}$}};

\filldraw[black] (5.25,0.75) circle (0.5pt);
\filldraw[black] (5.25,0.75) node[anchor=west]{\scalebox{.6}{$v_{i}$}};

\filldraw[black] (5.25,0.75) -- (5.1,0.4);

\filldraw[black] (5.1,0.4) circle (0.5pt);
\filldraw[black] (5.2,0.4) node[anchor=east]{\scalebox{.6}{$v_{i+1}$}};

\filldraw[black] (5.42,0.4) circle (0.5pt);
\filldraw[black] (5.4,0.4) node[anchor=west]{\scalebox{.6}{$v_{i+2}$}};

\filldraw[black] (5.1,0.4) -- (5.42,0.4);

\filldraw[black] (5.54,0.05) circle (0.5pt);
\filldraw[black] (5.6,0.15) node[anchor=north east]{\scalebox{.6}{$v_{i+3}$}};

\filldraw[black] (5.7,-0.4) node[anchor=east]{\scalebox{.6}{$\ldots$}};

\filldraw[black] (6.4,-0.635) circle (0.5pt);
\filldraw[black] (6.2,-0.7) node[anchor=south]{\scalebox{.6}{$v_{n-1}$}};

\filldraw[black] (6.5,0) circle (0.5pt);
\filldraw[black] (6.5,-0.05) node[anchor=south]{\scalebox{.6}{$v_{n}$}};

\draw[magenta, fill, opacity=0.2, line width=1mm] (5.25,0.75) -- (5.1,0.4);
\draw[magenta, fill, opacity=0.2, line width=1mm] (5.42,0.4) -- (5.1,0.4);

\end{tikzpicture}
}

%% file: media/path-flip.tex
\resizebox{25em}{15em}{%
\begin{tikzpicture}
   \newdimen\R
   \R=2.7cm
   \draw (0:\R) \foreach \x in {45,90,...,360} {  -- (\x:\R) };
   \foreach \x/\l/\p in
     {45/{$v_{i+m-1}$}/above,
      90/{$\ldots$}/above,
      135/{$v_{i+1}$}/left,
      180/{$v_i$}/right, 
      225/{$\tilde{v}_{i+1}$}/below,
      270/{$\ldots$}/below,
      315/{$\tilde{v}_{i+m-1}$}/right,
      360/{$v_{i+m}$}/left 
     }
     \node[inner sep=1pt,circle,draw,fill,label={\p:\l}] at (\x:\R) {};

\draw [black
] plot [smooth, tension=1] coordinates {(-5.2,0) (-4.2,1) (-3.2,-1) (-2.7,0)};

\filldraw[black] (2.7,0) circle (1.5pt);

\draw [black
] plot [smooth, tension=1] coordinates {(2.7,0) (3.2,-1) (3.8,1) (4.3,-0.5)};

\filldraw[black] (-5.2,0) circle (1.5pt);
\filldraw[black] (-5.2,0) node[anchor=north]{\scalebox{1}{$v_0$}};

\filldraw[black] (-4.7,0.85) circle (1.5pt);
\filldraw[black] (-4.7,0.95) node[anchor=south]{\scalebox{1}{$v_1$}};

\filldraw[black] (-4,0.65) node[anchor=south west]{\scalebox{1}{$\ldots$}};

\filldraw[black] (-3,-1) circle (1.5pt);
\filldraw[black] (-3,-1) node[anchor=north]{\scalebox{1}{$v_{i-1}$}};

\filldraw[black] (3,-0.97) circle (1.5pt);
\filldraw[black] (3,-1) node[anchor=north]{\scalebox{1}{$v_{i+m+1}$}};

\filldraw[black] (3.3,0.65) node[anchor=south]{\scalebox{1}{$\ldots$}};

\filldraw[black] (4.12,0.5) circle (1.5pt);
\filldraw[black] (4.12,0.5) node[anchor=west]{\scalebox{1}{$v_{n-1}$}};

\filldraw[black] (4.3,-0.5) circle (1.5pt);
\filldraw[black] (4.3,-0.5) node[anchor=north]{\scalebox{1}{$v_{n}$}};

\draw[magenta, fill, opacity=0.2, line width=1mm] (-2.7,0) -- (-1.9,1.9);
\draw[magenta, fill, opacity=0.2, line width=1mm] (-1.9,1.9) -- (0,2.7);
\draw[magenta, fill, opacity=0.2, line width=1mm] (0,2.7) -- (1.9,1.9);
\draw[magenta, fill, opacity=0.2, line width=1mm] (1.9,1.9) -- (2.7,0);

\draw[->, magenta] (0,1.7) -- (0,-1.7);

\end{tikzpicture}
}

%% file: media/tri-tri-case1.tex

\begin{tikzpicture}[scale=1.2, thick]

\coordinate (A1) at (0,0);
\coordinate (B1) at (-1,0.2);
\coordinate (C1) at (-0.5,1);
\draw[magenta] (A1)--(C1);
\draw[black] (A1)--(B1)--(C1);

\coordinate (A2) at (5,0.2);
\coordinate (B2) at (6,-0.3);
\coordinate (C2) at (5.6,1);
\draw[magenta] (A2)--(C2);
\draw[black] (A2)--(B2)--(C2);

\filldraw[ashgrey] (3.8,1.1) circle (1.5pt);
\draw[ashgrey, fill, opacity=0.3] (4,0.3) -- (3.8,1.1) -- (C2) -- (A2) -- (4.3, 0.17);
\draw[ashgrey] (4,0.3) -- (3.8,1.1) -- (C2);

\draw[ashgrey, dashed] (3.8,1.1) -- (C1);

\draw[ForestGreen, fill, opacity=0.5, line width=1mm] (C1) -- (3.8,1.1);
\draw[ForestGreen, fill, opacity=0.5, line width=1mm] (3.8,1.1) -- (C2);

\draw[decorate, decoration={snake, amplitude=2mm, segment length=20mm}, magenta]  (A1)--(A2);

\filldraw[black] (B1) circle (1.5pt);

\filldraw[black] (A1) circle (1.5pt);
\filldraw[black] (A1) node[anchor=north]{\scalebox{1}{$v_{1}$}};

\filldraw[black] (C1) circle (1.5pt);
\filldraw[black] (C1) node[anchor=south]{\scalebox{1}{$v_{0}$}};

\filldraw[black] (1,0) circle (1.5pt);
\filldraw[black] (1,0) node[anchor=south]{\scalebox{1}{$v_{2}$}};

\filldraw[black] (A2) circle (1.5pt);
\filldraw[black] (A2) node[anchor=north]{\scalebox{1}{$v_{n-1}$}};

\filldraw[black] (C2) circle (1.5pt);
\filldraw[black] (C2) node[anchor=south]{\scalebox{1}{$v_{n}$}};

\filldraw[black] (B2) circle (1.5pt);

\filldraw[black] (4,0.3) circle (1.5pt);
\filldraw[black] (4,0.3) node[anchor=north]{\scalebox{1}{$v_{n-2}$}};

\filldraw[black] (2.5,0.2) node[anchor=south]{\scalebox{1}{$\ldots$}};

\filldraw[magenta] (3,-0.5) node[anchor=south]{\scalebox{1}{$\gamma$}};
\filldraw[ForestGreen] (3,1.5) node[anchor=north]{\scalebox{1}{$\alpha$}};

\filldraw[black] (-0.5,0.2) node[anchor=south]{\scalebox{1}{$\mathcal{T}_1$}};

\filldraw[black] (5.5,0.1) node[anchor=south]{\scalebox{1}{$\mathcal{T}_2$}};

\filldraw[ashgrey] (3.8,1.1) node[anchor=south]{\scalebox{1}{$w$}};

\filldraw[ashgrey] (4.6,0.4) node[anchor=south]{\scalebox{1}{$\mathcal{C}$}};

\filldraw[red] (-0.5,0.1) node{\scalebox{1.5}{$\star$}};
\filldraw[red] (-0.25,0.5) node{\scalebox{1.5}{$\star$}};
\filldraw[red] (-0.75,0.6) node{\scalebox{1.5}{$\star$}};

\filldraw[red] (5.5,-0.05) node{\scalebox{1.5}{$\star$}};
\filldraw[red] (5.8,0.35) node{\scalebox{1.5}{$\star$}};
\filldraw[red] (5.3,0.6) node{\scalebox{1.5}{$\star$}};

\filldraw[red] (3.9,0.7) node{\scalebox{1.5}{$\star$}};

\draw [black, dashed, line width=0.1mm] plot [smooth, tension=1] coordinates { (1.8,1) (1.8,0.15) };
\filldraw[black] (1.8,0.39) node [anchor=north]{\scalebox{.25}{$\bigvee$}}; 
\filldraw[black] (1.8,0.83) node [anchor=south]{\scalebox{.25}{$\bigwedge$}}; 
\filldraw[black] (1.8,0.70) node [anchor=west]{\scalebox{.5}{$ \leq M$}};

\end{tikzpicture}


%% file: media/tri-tri-case2.tex

\begin{tikzpicture}[scale=1.2, thick]

\coordinate (A1) at (0,0);
\coordinate (B1) at (-1,0.2);
\coordinate (C1) at (-0.5,1);
\draw[magenta] (A1)--(C1);
\draw[black] (A1)--(B1)--(C1);

\coordinate (A2) at (5,0.2);
\coordinate (B2) at (6,-0.3);
\coordinate (C2) at (5.6,1);
\draw[magenta] (A2)--(C2);
\draw[black] (A2)--(B2)--(C2);

\draw[ashgrey, fill, opacity=0.3] (4.5,1.1) -- (C2) -- (A2);
\draw[ashgrey] (A2) -- (4.5,1.1) -- (C2);

\draw[ashgrey, fill, opacity=0.3] (A2) -- (4.3, 0.17) -- (4,0.3) -- (3.5,0.9) -- (4.5,1.1);
\draw[ashgrey] (4,0.3) -- (3.5,0.9) -- (4.5,1.1);
\draw[ashgrey, dashed] (3.5,0.9) -- (C1);
\filldraw[ashgrey] (4.5,1.1) circle (1.5pt);
\filldraw[ashgrey] (3.5,0.9) circle (1.5pt);

\draw[ForestGreen, fill, opacity=0.5, line width=1mm] (C1) -- (3.5,0.9);
\draw[ForestGreen, fill, opacity=0.5, line width=1mm] (3.5,0.9) -- (4.5,1.1);
\draw[ForestGreen, fill, opacity=0.5, line width=1mm] (4.5,1.1) -- (C2);

\draw[decorate, decoration={snake, amplitude=2mm, segment length=20mm}, magenta]  (A1)--(A2);

\filldraw[black] (B1) circle (1.5pt);

\filldraw[black] (A1) circle (1.5pt);
\filldraw[black] (A1) node[anchor=north]{\scalebox{1}{$v_{1}$}};

\filldraw[black] (C1) circle (1.5pt);
\filldraw[black] (C1) node[anchor=south]{\scalebox{1}{$v_{0}$}};

\filldraw[black] (1,0) circle (1.5pt);
\filldraw[black] (1,0) node[anchor=south]{\scalebox{1}{$v_{2}$}};

\filldraw[black] (A2) circle (1.5pt);
\filldraw[black] (A2) node[anchor=north]{\scalebox{1}{$v_{n-1}$}};

\filldraw[black] (C2) circle (1.5pt);
\filldraw[black] (C2) node[anchor=south]{\scalebox{1}{$v_{n}$}};

\filldraw[black] (B2) circle (1.5pt);

\filldraw[black] (4,0.3) circle (1.5pt);
\filldraw[black] (4,0.3) node[anchor=north]{\scalebox{1}{$v_{n-2}$}};

\filldraw[black] (2.5,0.2) node[anchor=south]{\scalebox{1}{$\ldots$}};

\filldraw[magenta] (3,-0.5) node[anchor=south]{\scalebox{1}{$\gamma$}};
\filldraw[ForestGreen] (3,1.5) node[anchor=north]{\scalebox{1}{$\alpha$}};

\filldraw[black] (-0.5,0.2) node[anchor=south]{\scalebox{1}{$\mathcal{T}_1$}};

\filldraw[black] (5.5,0.1) node[anchor=south]{\scalebox{1}{$\mathcal{T}_2$}};

\filldraw[ashgrey] (4.5,1.1) node[anchor=south]{\scalebox{1}{$w'$}};
\filldraw[ashgrey] (5.05,0.75) node{\scalebox{1}{$\mathcal{T}$}};

\filldraw[ashgrey] (3.5,0.9) node[anchor=north east]{\scalebox{1}{$w$}};
\filldraw[ashgrey] (4.2,0.7) node{\scalebox{1}{$\mathcal{C}$}};

\filldraw[red] (-0.5,0.1) node{\scalebox{1.5}{$\star$}};
\filldraw[red] (-0.25,0.5) node{\scalebox{1.5}{$\star$}};
\filldraw[red] (-0.75,0.6) node{\scalebox{1.5}{$\star$}};

\filldraw[red] (5.5,-0.05) node{\scalebox{1.5}{$\star$}};
\filldraw[red] (5.8,0.35) node{\scalebox{1.5}{$\star$}};
\filldraw[red] (5.3,0.6) node{\scalebox{1.5}{$\star$}};

\filldraw[red] (3.75,0.6) node{\scalebox{1.5}{$\star$}};
\filldraw[red] (4.75,0.65) node{\scalebox{1.5}{$\star$}};
\filldraw[red] (5.05,1.05) node{\scalebox{1.5}{$\star$}};

\draw [black, dashed, line width=0.1mm] plot [smooth, tension=1] coordinates { (1.8,0.9) (1.8,0.15) };
\filldraw[black] (1.8,0.315) node [anchor=north]{\scalebox{.25}{$\bigvee$}}; 
\filldraw[black] (1.8,0.73) node [anchor=south]{\scalebox{.25}{$\bigwedge$}}; 
\filldraw[black] (1.8,0.6) node [anchor=west]{\scalebox{.5}{$ \leq M$}};

\end{tikzpicture}


%% file: media/tri-cycle-case1.tex

\begin{tikzpicture}[scale=1.2, thick]

\draw[ashgrey] (5.6,1) -- (3.8,1.1) -- (4,-0.025);
\draw[ashgrey, fill, opacity=0.3] (4,-0.025) -- (3.8,1.1) -- (5.6,1) -- (5,0.2) -- (4.6,0.2); 
\draw[ashgrey, dashed] (0,1) -- (3.8,1.1);
\filldraw[ashgrey] (3.8,1.1) circle (1.5pt);

\draw[ForestGreen, fill, opacity=0.5, line width=1mm] (0,1) -- (3.8,1.1);
\draw[ForestGreen, fill, opacity=0.5, line width=1mm] (3.8,1.1) -- (5.6,1);

\draw[decorate, decoration={snake, amplitude=2mm, segment length=20mm}, magenta]  (1,0) -- (5,0.2);

\draw[black] (0,1) -- (-0.707,0.707) -- (-1,0) -- (-0.707,-0.707) -- (0,-1) -- (0.707,-0.707) -- (1,0);
\draw[magenta] (1,0) -- (0.707,0.707) -- (0,1);

\filldraw[black] (1,0) circle (1.5pt);
\filldraw[black] (1,0) node[anchor=east]{\scalebox{1}{$v_{p_1}$}};
\filldraw[black] (0.707,0.707) circle (1.5pt);
\filldraw[black] (0,1) circle (1.5pt);
\filldraw[black] (0,1) node[anchor=south]{\scalebox{1}{$v_0$}};
\filldraw[black] (-0.707,0.707) circle (1.5pt);
\filldraw[black] (-1,0) circle (1.5pt);
\filldraw[black] (-0.707,-0.707) circle (1.5pt);
\filldraw[black] (0,-1) circle (1.5pt);
\filldraw[black] (0.707,-0.707) circle (1.5pt);

\coordinate (A2) at (5,0.2);
\coordinate (B2) at (6,-0.3);
\coordinate (C2) at (5.6,1);
\draw[magenta] (A2)--(C2);
\draw[black] (A2)--(B2)--(C2);


\filldraw[black] (A2) circle (1.5pt);
\filldraw[black] (A2) node[anchor=north]{\scalebox{1}{$v_{n-1}$}};

\filldraw[black] (C2) circle (1.5pt);
\filldraw[black] (C2) node[anchor=south]{\scalebox{1}{$v_{n}$}};

\filldraw[black] (B2) circle (1.5pt);

\filldraw[black] (4,-0.025) circle (1.5pt);
\filldraw[black] (4,-0.025) node[anchor=north]{\scalebox{1}{$v_{n-2}$}};

\filldraw[black] (2.5,0.2) node[anchor=south]{\scalebox{1}{$\ldots$}};

\filldraw[magenta] (3,-0.5) node[anchor=south]{\scalebox{1}{$\gamma$}};
\filldraw[ForestGreen] (3,1.5) node[anchor=north]{\scalebox{1}{$\alpha$}};

\filldraw[black] (0,-0.2) node[anchor=south]{\scalebox{1}{$\mathcal{C}_1$}};

\filldraw[black] (5.5,0.1) node[anchor=south]{\scalebox{1}{$\mathcal{T}_2$}};

\filldraw[ashgrey] (3.8,1.1) node[anchor=south]{\scalebox{1}{$w$}};

\filldraw[ashgrey] (4.6,0.4) node[anchor=south]{\scalebox{1}{$\mathcal{C}$}};

\filldraw[red] (0.3585,0.8535) node{\scalebox{1.5}{$\star$}};
\filldraw[red] (-0.3585,-0.8535) node{\scalebox{1.5}{$\star$}};

\filldraw[red] (5.5,-0.05) node{\scalebox{1.5}{$\star$}};
\filldraw[red] (5.8,0.35) node{\scalebox{1.5}{$\star$}};
\filldraw[red] (5.3,0.6) node{\scalebox{1.5}{$\star$}};

\draw [black, dashed, line width=0.1mm] plot [smooth, tension=1] coordinates { (1.8,1) (1.8,0.15) };
\filldraw[black] (1.8,0.39) node [anchor=north]{\scalebox{.25}{$\bigvee$}}; 
\filldraw[black] (1.8,0.83) node [anchor=south]{\scalebox{.25}{$\bigwedge$}}; 
\filldraw[black] (1.8,0.70) node [anchor=west]{\scalebox{.5}{$ \leq M$}};

\end{tikzpicture}


%% file: media/tri-cycle-case2.tex

\begin{tikzpicture}[scale=1.2, thick]

\coordinate (A2) at (5,0.2);
\coordinate (B2) at (6,-0.3);
\coordinate (C2) at (5.6,1);

\draw[ashgrey, fill, opacity=0.3] (4.5,1.1) -- (C2) -- (A2);
\draw[ashgrey] (A2) -- (4.5,1.1) -- (C2);

\draw[ashgrey, fill, opacity=0.3] (A2) -- (4.425, 0.15) -- (4,-0.025) -- (3.5,0.9) -- (4.5,1.1);
\draw[ashgrey] (4,-0.025) -- (3.5,0.9) -- (4.5,1.1);
\draw[ashgrey, dashed] (3.5,0.9) -- (0,1);
\filldraw[ashgrey] (4.5,1.1) circle (1.5pt);
\filldraw[ashgrey] (3.5,0.9) circle (1.5pt);

\draw[ForestGreen, fill, opacity=0.5, line width=1mm] (0,1) -- (3.5,0.9);
\draw[ForestGreen, fill, opacity=0.5, line width=1mm] (3.5,0.9) -- (4.5,1.1);
\draw[ForestGreen, fill, opacity=0.5, line width=1mm] (4.5,1.1) -- (5.6,1);

\draw[decorate, decoration={snake, amplitude=2mm, segment length=20mm}, magenta]  (1,0)--(A2);

\draw[black] (0,1) -- (-0.707,0.707) -- (-1,0) -- (-0.707,-0.707) -- (0,-1) -- (0.707,-0.707) -- (1,0);
\draw[magenta] (1,0) -- (0.707,0.707) -- (0,1);

\filldraw[black] (1,0) circle (1.5pt);
\filldraw[black] (1,0) node[anchor=east]{\scalebox{1}{$v_{p_1}$}};
\filldraw[black] (0.707,0.707) circle (1.5pt);
\filldraw[black] (0,1) circle (1.5pt);
\filldraw[black] (0,1) node[anchor=south]{\scalebox{1}{$v_0$}};
\filldraw[black] (-0.707,0.707) circle (1.5pt);
\filldraw[black] (-1,0) circle (1.5pt);
\filldraw[black] (-0.707,-0.707) circle (1.5pt);
\filldraw[black] (0,-1) circle (1.5pt);
\filldraw[black] (0.707,-0.707) circle (1.5pt);

\coordinate (A2) at (5,0.2);
\coordinate (B2) at (6,-0.3);
\coordinate (C2) at (5.6,1);
\draw[magenta] (A2)--(C2);
\draw[black] (A2)--(B2)--(C2);

\filldraw[black] (A2) circle (1.5pt);
\filldraw[black] (A2) node[anchor=north]{\scalebox{1}{$v_{n-1}$}};

\filldraw[black] (C2) circle (1.5pt);
\filldraw[black] (C2) node[anchor=south]{\scalebox{1}{$v_{n}$}};

\filldraw[black] (B2) circle (1.5pt);

\filldraw[black] (4,-0.025) circle (1.5pt);
\filldraw[black] (4,-0.025) node[anchor=north]{\scalebox{1}{$v_{n-2}$}};

\filldraw[black] (2.5,0.2) node[anchor=south]{\scalebox{1}{$\ldots$}};

\filldraw[magenta] (3,-0.5) node[anchor=south]{\scalebox{1}{$\gamma$}};
\filldraw[ForestGreen] (3,1.5) node[anchor=north]{\scalebox{1}{$\alpha$}};

\filldraw[black] (5.5,0.1) node[anchor=south]{\scalebox{1}{$\mathcal{T}_2$}};

\filldraw[ashgrey] (4.5,1.1) node[anchor=south]{\scalebox{1}{$w'$}};
\filldraw[ashgrey] (5.05,0.75) node{\scalebox{1}{$\mathcal{T}$}};

\filldraw[ashgrey] (3.5,0.9) node[anchor=north east]{\scalebox{1}{$w$}};
\filldraw[ashgrey] (4.2,0.7) node{\scalebox{1}{$\mathcal{C}$}};

\filldraw[red] (5.5,-0.05) node{\scalebox{1.5}{$\star$}};
\filldraw[red] (5.8,0.35) node{\scalebox{1.5}{$\star$}};
\filldraw[red] (5.3,0.6) node{\scalebox{1.5}{$\star$}};

\filldraw[red] (3.75,0.4375) node{\scalebox{1.5}{$\star$}};
\filldraw[red] (4.75,0.65) node{\scalebox{1.5}{$\star$}};
\filldraw[red] (5.05,1.05) node{\scalebox{1.5}{$\star$}};

\filldraw[red] (0.3585,0.8535) node{\scalebox{1.5}{$\star$}};
\filldraw[red] (-0.3585,-0.8535) node{\scalebox{1.5}{$\star$}};

\draw [black, dashed, line width=0.1mm] plot [smooth, tension=1] coordinates { (1.8,0.9) (1.8,0.15) };
\filldraw[black] (1.8,0.315) node [anchor=north]{\scalebox{.25}{$\bigvee$}}; 
\filldraw[black] (1.8,0.73) node [anchor=south]{\scalebox{.25}{$\bigwedge$}}; 
\filldraw[black] (1.8,0.6) node [anchor=west]{\scalebox{.5}{$ \leq M$}};

\end{tikzpicture}


%% file: media/cycle-cycle-case1.tex

\begin{tikzpicture}[scale=1.2, thick]

\draw[ashgrey] (3.786, 0.882) -- (3.1, 1.6) -- (3.786, 1.882);
\filldraw[ashgrey] (3.1, 1.6) circle (1.5pt);
\filldraw[ashgrey] (3.1, 1.6) node[anchor= south]{\scalebox{1}{$w'$}};
\filldraw[ashgrey] (3.55, 1.3) node[anchor= south]{\scalebox{1}{$\mathcal{T}$}};
\draw[ashgrey, fill, opacity=0.3] (3.786, 0.882) -- (3.1, 1.6) -- (3.786, 1.882);

\filldraw[ashgrey] (2.4, 1.5) circle (1.5pt);
\filldraw[ashgrey] (2.4, 1.5) node[anchor= south]{\scalebox{1}{$w$}};
\draw[ashgrey] (3, 0.8) -- (2.4, 1.5) -- (3.1, 1.6);
\filldraw[ashgrey] (3.1, 1) node[anchor= south]{\scalebox{1}{$\mathcal{C}$}};
\draw[ashgrey, dashed] (2.4, 1.5) -- (0,1);
\draw[ashgrey, fill, opacity=0.3] (3, 0.8) -- (2.4, 1.5) -- (3.1, 1.6) -- (3.786,0.882) -- (3.5,0.8) -- (3.1,0.85) -- (3, 0.8);

\draw[magenta] (1,0) -- (0.707,0.707) -- (0,1);
\draw[decorate, decoration={snake, amplitude=2mm, segment length=20mm}, magenta]  (1,0) -- (3.786, 0.882);
\filldraw[magenta] (2.3,0.1) node[anchor=north]{\scalebox{1}{$\gamma$}};

\draw[black] (0,1) -- (-0.707,0.707) -- (-1,0) -- (-0.707,-0.707) -- (0,-1) -- (0.707,-0.707) -- (1,0);

\filldraw[black] (1,0) circle (1.5pt);
\filldraw[black] (1,0) node[anchor=east]{\scalebox{1}{$v_{p_1}$}};
\filldraw[black] (0.707,0.707) circle (1.5pt);
\filldraw[black] (0,1) circle (1.5pt);
\filldraw[black] (0,1) node[anchor=south]{\scalebox{1}{$v_0$}};
\filldraw[black] (-0.707,0.707) circle (1.5pt);
\filldraw[black] (-1,0) circle (1.5pt);
\filldraw[black] (-0.707,-0.707) circle (1.5pt);
\filldraw[black] (0,-1) circle (1.5pt);
\filldraw[black] (0.707,-0.707) circle (1.5pt);

\filldraw[black] (0,0) node{\scalebox{1}{$\mathcal{C}_1$}};

\draw[black] (3.786, 1.882) -- (5.286, 1.882) -- (5.286, 0.882) -- (3.786, 0.882);
\draw[magenta] (3.786, 0.882) -- (3.786, 1.882);

\filldraw[black] (3.786, 0.882) circle (1.5pt);
\filldraw[black] (3.786, 0.882) node[anchor= south west]{\scalebox{1}{$v_{n-1}$}};

\filldraw[black] (5.286, 0.882) circle (1.5pt);
\filldraw[black] (5.286, 1.882) circle (1.5pt);
\filldraw[black] (3.786, 1.882) circle (1.5pt);
\filldraw[black] (3.786, 1.882) node[anchor= south]{\scalebox{1}{$v_{n}$}};

\filldraw[black] (4.536,1.382) node{\scalebox{1}{$\mathcal{C}_2$}};

\filldraw[black] (1.5, 0.325) circle (1.5pt);
\filldraw[black] (1.5, 0.325) node[anchor= south]{\scalebox{1}{$v_{p_1+1}$}};

\filldraw[black] (2.3, 0.3) node[anchor= south]{\scalebox{1}{$\ldots$}};

\filldraw[black] (3, 0.8) circle (1.5pt);
\filldraw[black] (3, 0.8) node[anchor= north west]{\scalebox{1}{$v_{n-2}$}};


\filldraw[red] (2.7,1.15) node{\scalebox{1.5}{$\star$}};
\filldraw[red] (3.443,1.241) node{\scalebox{1.5}{$\star$}};
\filldraw[red] (3.443,1.741) node{\scalebox{1.5}{$\star$}};
\filldraw[red] (3.786,1.382) node{\scalebox{1.5}{$\star$}};
\filldraw[red] (0.3535,0.8535) node{\scalebox{1.5}{$\star$}};

\draw[ForestGreen, fill, opacity=0.5, line width=1mm] (0,1) -- (2.4,1.5);
\draw[ForestGreen, fill, opacity=0.5, line width=1mm] (2.4,1.5) -- (3.1,1.6);
\draw[ForestGreen, fill, opacity=0.5, line width=1mm] (3.1,1.6) -- (3.786,1.882);

\filldraw[ForestGreen] (1.2,1.8) node[anchor= north west]{\scalebox{1}{$\hat{\alpha}$}};

\filldraw[black] (0,1) circle (1.5pt);
\filldraw[black] (3.786, 1.882) circle (1.5pt);

\draw [black, dashed, line width=0.1mm] plot [smooth, tension=1] coordinates { (2,1.2) (2,0.45) };
\filldraw[black] (2,0.45) node [anchor=north]{\scalebox{.25}{$\bigvee$}}; 
\filldraw[black] (2,1.2) node [anchor=south]{\scalebox{.25}{$\bigwedge$}}; 
\filldraw[black] (2,0.9) node [anchor=west]{\scalebox{.5}{$ \leq 2M$}};

\end{tikzpicture}


%% file: media/cycle-cycle-case2.tex

\begin{tikzpicture}[scale=1.2, thick]

\filldraw[ashgrey] (3, -0.9) circle (1.5pt);
\draw[ashgrey] (3, 0.8) -- (3, -0.9) -- (3.5, 0);
\draw[ashgrey, fill, opacity=0.3] (3, 0.8) -- (3.1,0.83) -- (3.5,0.8) -- (3.786, 0.882) -- (3.5, 0) -- (3,-0.9) -- (3,0.8);
\filldraw[ashgrey] (3.35,0.1) node[anchor=south]{\scalebox{1}{$\mathcal{C}^1$}};
\draw[ashgrey, dashed] (3, -0.9) -- (0,-1);

\filldraw[ashgrey] (2.6, -0.73) circle (1.5pt);
\filldraw[ashgrey] (3,0.2) circle (1.5pt);
\draw[ashgrey] (3,0.2) -- (2.6,-0.7) -- (2.7, 0.535);
\draw[ashgrey, fill, opacity=0.3] (2.7, 0.535) -- (2.85,0.7) -- (3,0.8) -- (3,0.2) -- (2.6,-0.7) -- (2.7, 0.535);
\filldraw[ashgrey] (2.85,0.1) node[anchor=south]{\scalebox{1}{$\mathcal{C}^2$}};
\draw[ashgrey, dashed] (2.6,-0.73) -- (0,-1);

\draw[ashgrey] (0.707,-0.707) -- (1.4, 0.305);
\draw[ashgrey] (1.9,0.28) -- (1.67, -0.1);
\path[every node/.style={font=\sffamily\small}] (1.67, -0.1) edge[ashgrey, bend left] node [right] {} (0,-1);
\draw[ashgrey, fill, opacity=0.3] (0.707,-0.707) -- (1.4, 0.305) -- (1.62, 0.345) -- (1.9, 0.28) -- (1.67,-0.1) -- (1.45,-0.4) -- (1.125,-0.7) -- (0.8,-0.88) -- (0.51, -0.97) -- (0,-1);
\filldraw[ashgrey] (1.35,-0.4) node[anchor=south]{\scalebox{1}{$\mathcal{C}^{*}$}};

\draw[magenta] (0,-1) -- (0.707,-0.707) -- (1,0);
\draw[magenta] (6.214, 0.882) -- (5.464, 1.426) -- (4.536, 1.426) -- (3.786, 0.882);
\draw[decorate, decoration={snake, amplitude=2mm, segment length=20mm}, magenta]  (1,0) -- (3.786, 0.882);
\filldraw[magenta] (2.3,0.4) node[anchor=south]{\scalebox{1}{$\gamma$}};

\path[every node/.style={font=\sffamily\small}] (1.67, -0.1) edge[ForestGreen, opacity=0.5, line width=1mm, bend left] node [right] {} (0,-1);
\filldraw[ForestGreen] (1.55,-0.85) node[anchor=south]{\scalebox{1}{$\alpha_1$}};
\path[every node/.style={font=\sffamily\small}] (1.67, -0.1) edge[ForestGreen, opacity=0.5, line width=1mm, bend right] node [right] {} (6.214, 0.882);
\filldraw[ForestGreen] (4,-0.7) node[anchor=south]{\scalebox{1}{$\alpha_2$}};

\draw[black] (1,0) -- (0.707,0.707) -- (0,1) -- (-0.707,0.707) -- (-1,0) -- (-0.707,-0.707) -- (0,-1);

\filldraw[black] (1,0) circle (1.5pt);
\filldraw[black] (1,0) node[anchor=east]{\scalebox{1}{$v_{p_1}$}};
\filldraw[black] (0.707,0.707) circle (1.5pt);
\filldraw[black] (0,1) circle (1.5pt);
\filldraw[black] (-0.707,0.707) circle (1.5pt);
\filldraw[black] (-1,0) circle (1.5pt);
\filldraw[black] (-0.707,-0.707) circle (1.5pt);
\filldraw[black] (0,-1) circle (1.5pt);
\filldraw[black] (0,-1) node[anchor=north]{\scalebox{1}{$v_0$}};
\filldraw[black] (0.707,-0.707) circle (1.5pt);

\filldraw[black] (0,0) node{\scalebox{1}{$\mathcal{C}_1$}};

\filldraw[black] (6.5,0) circle (1.5pt);
\filldraw[black] (6.214, 0.882) circle (1.5pt);
\filldraw[black] (6.214, 0.882) node[anchor=west]{\scalebox{1}{$v_n$}};
\filldraw[black] (5.464, 1.426) circle (1.5pt);
\filldraw[black] (5.464, 1.35) node[anchor=north]{\scalebox{1}{$v_{n-1}$}};
\filldraw[black] (4.536, 1.426) circle (1.5pt);
\filldraw[black] (4.536, 1.4) node[anchor= north]{\scalebox{1}{$\ldots$}};
\filldraw[black] (3.786, 0.882) circle (1.5pt);
\filldraw[black] (3.786, 0.882) node[anchor= north west]{\scalebox{1}{$v_{n-p_2}$}};
\filldraw[black] (3.5, 0) circle (1.5pt);
\filldraw[black] (3.786, -0.882) circle (1.5pt);
\filldraw[black] (4.536, -1.426) circle (1.5pt);
\filldraw[black] (5.464, -1.426) circle (1.5pt);
\filldraw[black] (6.214, -0.882) circle (1.5pt);

\draw[black] (3.786, 0.882) -- (3.5, 0) -- (3.786, -0.882) -- (4.536, -1.426) -- (5.464, -1.426) -- (6.214, -0.882) -- (6.5,0) -- (6.214, 0.882);

\filldraw[black] (5,0) node{\scalebox{1}{$\mathcal{C}_2 {\color{ashgrey} = \mathcal{C}^0}$}};

\filldraw[black] (1.4, 0.305) circle (1.5pt);

\filldraw[black] (2.3, 0) node[anchor= south]{\scalebox{1}{$\ldots$}};

\filldraw[black] (3, 0.8) circle (1.5pt);
\filldraw[black] (3, 0.8) node[anchor= south]{\scalebox{1}{$v_{n-p_2-1}$}};

\filldraw[black] (2.7, 0.535) circle (1.5pt);

\filldraw[black] (1.9, 0.28) circle (1.5pt);
\filldraw[black] (1.9, 0.28) node[anchor= south]{\scalebox{1}{$v_k$}};

\filldraw[black] (1.4, 0.305) circle (1.5pt);
\filldraw[black] (1.4, 0.305) node[anchor= south]{\scalebox{1}{$v_{k-1}$}};

\filldraw[red] (5.839,1.166) node{\scalebox{1.5}{$\star$}};
\filldraw[red] (0.3535,-0.8535) node{\scalebox{1.5}{$\star$}};
\filldraw[red] (1.8,0.1) node{\scalebox{1.5}{$\star$}};

\filldraw[ashgrey] (1.67, -0.1) circle (1.5pt);
\filldraw[black] (1.67, -0.1) node[anchor=west]{\scalebox{1}{$w$}};

\draw [black, dashed, line width=0.1mm] plot [smooth, tension=1] coordinates { (5,1.3) (5,0.15) };
\filldraw[black] (5,0.315) node [anchor=north]{\scalebox{.25}{$\bigvee$}}; 
\filldraw[black] (5,1.2) node [anchor=south]{\scalebox{.25}{$\bigwedge$}}; 
\filldraw[black] (5,0.6) node [anchor=west]{\scalebox{.5}{$ \leq 2M$}};

\draw [black, dashed, line width=0.1mm] plot [smooth, tension=1] coordinates { (2.1,0.15) (2.1,-0.1) };
\filldraw[black] (2.1,0) node [anchor=north]{\scalebox{.25}{$\bigvee$}}; 
\filldraw[black] (2.1,0.05) node [anchor=south]{\scalebox{.25}{$\bigwedge$}}; 
\filldraw[black] (2.1,0.015) node [anchor=west]{\scalebox{.5}{$ \leq 2M$}};


\end{tikzpicture}
